\declaretheorem[name=Theorem,numberwithin=section]{thm}
\newtheorem{defs}[thm]{Definition}
\newtheorem{claim}[thm]{Claim}
\newtheorem{lemma}[thm]{Lemma}
\newtheorem{prop}[thm]{Proposition}
\newtheorem{obs}[thm]{Observation}
\newtheorem{conjecture}[thm]{Conjecture}
\newtheorem{question}[thm]{Question}
\begin{document}

\author{Luke Postle}
\author{Evelyne Smith-Roberge$^*$}
\affil{Dept. of Combinatorics and Optimization, University of Waterloo, 
Canada. \\ \texttt{\{lpostle, e2smithr\}@uwaterloo.ca}}

\title{On the Density of $C_7$-Critical Graphs}
\date{March 2019}
\maketitle
%======================================================================
%-------------------------ABSTRACT-------------------------------------
%======================================================================
\abstract{In 1959, Gr\"{o}tzsch \cite{grotzsch1959dreifarbensatz} famously proved that every planar graph of girth at least 4 is 3-colourable (or equivalently, admits a homomorphism to $C_3$).  A natural generalization of this is the following conjecture: for every positive integer $t$, every planar graph of girth at least $4t$ admits a homomorphism to $C_{2t+1}$. This is in fact the planar dual of a well-known conjecture of Jaeger \cite{jaeger1984circular} which states that every $4t$-edge-connected graph admits a modulo $(2t+1)$-orientation.  Though Jaeger's original conjecture was disproved in \cite{han2018counterexamples}, Lovasz et al. \cite{lovasz2013nowhere} showed that every $6t$-edge connected graph admits a modulo $(2t+1)$-flow. The latter result implies that every planar graph of girth at least $6t$ admits a homomorphism to $C_{2t+1}$. We improve upon this in the $t=3$ case, by showing that every planar graph of girth at least $16$ admits a homomorphism to $C_7$.  We obtain this through a more general result regarding the density of $C_7$-critical graphs: if $G$ is a $C_7$-critical graph with $G \not \in \{C_3, C_5\}$, then $e(G) \geq \tfrac{17v(G)-2}{15}$.}  
%======================================================================
%-------------------------INTRO STUFF----------------------------------
%======================================================================
%======================================================================
%-------------------------INTRODUCTION---------------------------------
%======================================================================
\begin{section}{Introduction and Notation}

In 1951, Dirac \cite{dirac1951note} introduced the concept of colour-criticality and since then, colour-critical graphs have been widely studied. A graph $G$ is $k$-\emph{critical} if its chromatic number is $k$ and the chromatic number of every proper subgraph of $G$ is strictly less than $k$. As every graph with chromatic number $k$ contains a $k$-critical subgraph, it is useful to study $k$-colourability via colour-critical graphs. More generally, it is useful to study graph homomorphisms\footnote{A homomorphism $\phi:G \rightarrow H$ from a graph $G$ to a target graph $H$ is a mapping of the vertices of $G$ to those of $H$, such that for each edge $uv \in E(G)$, $\phi(u)\phi(v) \in E(H)$.} through homomorphism-critical graphs, which we define as follows.
\begin{defs}
Let $H$ be a graph. A graph $G$ is \textnormal{$H$-critical} if every proper subgraph of $G$ admits a homomorphism to $H$, but $G$ itself does not. 
\end{defs}

Perhaps one of the more famous results concerning homomorphisms of planar graphs is Gr\"{o}tzsch's Theorem \cite{grotzsch1959dreifarbensatz}, which states that every planar graph of girth at least $4$ admits a homomorphism to $C_3$ (or equivalently, is 3-colourable).  As a natural generalization of this, one might conjecture the following.

\begin{conjecture}\label{jaeger}
If $G$ is a planar graph of girth at least $4t$, then $G$ admits a homomorphism to $C_{2t+1}$. 
\end{conjecture}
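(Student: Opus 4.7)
The plan is to attack this conjecture via the standard ``minimum counterexample, critical structure, discharging'' framework that has settled Gr\"{o}tzsch's theorem and its successors. Suppose for contradiction that $G$ is a planar graph of girth at least $4t$ admitting no homomorphism to $C_{2t+1}$, and choose $G$ to minimise $|V(G)|$. By minimality $G$ is $C_{2t+1}$-critical: every proper subgraph of $G$ admits a homomorphism to $C_{2t+1}$. The argument then splits into a structural/density half (which is independent of planarity) and an Euler-formula half.

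For the structural half, I would build up a library of reducible configurations in $C_{2t+1}$-critical graphs. Given a homomorphism $\phi$ of $G - S$ to $C_{2t+1}$ for some small $S \subseteq V(G)$, each $v \in S$ acquires a list $L(v) \subseteq V(C_{2t+1})$ of ``allowed images'' determined by $\phi$ on the neighbours of $v$ outside $S$, and $\phi$ extends to $G$ iff the resulting list-homomorphism problem has a solution. A canonical reducible configuration is a sufficiently long thread of degree-$2$ vertices: each degree-$2$ vertex corresponds to a walk-step in $C_{2t+1}$, and a long walk in an odd cycle can realise any pair of endpoints of appropriate parity, so such threads can always be absorbed. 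Ruling out short threads and other small substructures in a $C_{2t+1}$-critical graph forces every such graph to be ``dense'' in a precise sense, ultimately yielding a bound of the form $e(G) \geq c_t \cdot v(G) - O(1)$ for $C_{2t+1}$-critical graphs (excluding small exceptional cases).

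The Euler-formula half is routine: any plane graph of girth at least $4t$ satisfies $e(G) \leq \tfrac{2t}{2t-1}(v(G) - 2)$. Matching this against the density lower bound closes the argument, provided $c_t$ exceeds the threshold $\tfrac{2t}{2t-1} = 1 + \tfrac{1}{2t-1}$, which tightens as $t$ grows. For $t=1$ (Gr\"{o}tzsch's theorem) the threshold is $2$ and is comfortably beaten; for $t=3$, the density bound $e(G) \geq (17v(G) - 2)/15$ announced in the abstract corresponds to $c_3 = 17/15$, which misses the $6/5$ that the crude Euler bound for girth $12$ would demand, but is compatible (via a finer discharging argument that tracks actual face lengths rather than the worst case) with the weaker hypothesis of girth $16$ claimed in the abstract.

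The main obstacle is therefore the reducibility catalogue: identifying enough reducible configurations to push $c_t$ above the critical threshold requires increasingly intricate list-homomorphism analyses on $C_{2t+1}$. One must understand precisely which patterns of neighbour-lists on $C_{2t+1}$ extend and which do not, and then engineer a discharging scheme whose initial charges and redistribution rules realise the density target. For general $t$ this remains open, which is exactly why \Cref{jaeger} is still a conjecture; for $t=3$ I would expect an involved scheme tuned to the specific combinatorics of $C_7$ (in particular that any walk of length at least $7$ in $C_7$ can hit any target vertex of the correct parity), which is the content of the paper's main theorem.
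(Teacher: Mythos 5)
You correctly recognize that this is an open conjecture that the paper does not (and cannot, with its current methods) resolve; your sketch of the general framework — minimum counterexample, criticality, reducible configurations via list-homomorphism analysis along threads, density bound plus Euler — is indeed the framework the paper uses to attack the weaker Theorem~\ref{girth16}. Your quantitative observations are also accurate: for $t=3$ the paper's $c_3 = 17/15$ falls short of the $6/5$ threshold that a pure Euler bound at girth $12$ would demand, so the achieved bound is $16$ rather than $12$.

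There is, however, a conceptual gap in the proposal that the paper itself highlights in Section~5, and it is worth making explicit because it shows your framework cannot succeed even in principle. Proposition~\ref{orecons} shows that subdividing every edge of a $(2t+2)$-Ore graph $(2t-2)$ times produces a $C_{2t+1}$-critical graph, and Proposition~\ref{notjustdensity} computes its density: asymptotically $e(G')/v(G') \to \tfrac{t(2t+3)}{2t^2+2t-1}$. This is \emph{strictly less} than the girth-$4t$ Euler threshold $\tfrac{2t}{2t-1}$ (the difference is visible after clearing denominators: $4t^2+4t-3 < 4t^2+4t-2$). Consequently no density theorem for $C_{2t+1}$-critical graphs, however sharp, combined with Euler's formula alone, can force girth $4t$; the best such an argument could ever yield is girth $4t+2$. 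So ``identifying enough reducible configurations to push $c_t$ above the critical threshold'' is not merely hard, it is impossible, because the threshold is unattainable. A proof of Conjecture~\ref{jaeger} must therefore use planarity beyond Euler counting. One small further correction: the paper's drop from girth $17$ to girth $16$ is not obtained by a finer discharging that ``tracks actual face lengths'' — it comes from the Folding Lemma of Klostermeyer and Zhang, a planar reduction that lets one assume every face is an odd-girth cycle, after which Theorem~\ref{main} and Euler close the argument (Theorem~\ref{oddcycles}, and hence Theorem~\ref{girth16}). The Folding Lemma is exactly the kind of planarity-specific tool that a successful attack on Conjecture~\ref{jaeger} would need to exploit more aggressively.
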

 
This is in fact the planar dual of a well-known conjecture of Jaeger \cite{jaeger1984circular}  which states that every $4t$-edge-connected graph admits a modulo $(2t+1)$-orientation\footnote{That is, an orientation of its edges such that for each vertex, the difference of the in-degree and the out-degree is congruent to 0 modulo $2t+1$.}. Though Jaeger's original conjecture was shown to be false in early 2018 \cite{han2018counterexamples}, all counterexamples found thus far are non-planar. As such, Conjecture \ref{jaeger} is still open. We note that Conjecture \ref{jaeger} is equivalent to the following:  if $G$ is a planar graph of girth at least $4t$, then $G$ admits a $\left(\tfrac{2t+1}{t} \right)$-circular colouring. For an overview on circular colouring, see \cite{surv}. The $t=1$ case is the only case in which the conjecture has been confirmed. 
 
Considerable progress has been made in the general $t$ case, though the girth bound of $4t$ remains elusive. In 1996, Ne\v set\v ril and Zhu \cite{NesAndZhu} showed that every planar graph of girth at least $10t-4$ admits a homomorphism to $C_{2t+1}$. In 2000, Klostermeyer and Zhang \cite{klostermeyer20002+} showed that it is sufficient to bound the odd girth\footnote{The \emph{odd girth} of a graph is the length of its shortest odd cycle.} of the graph as being at least $10t-3$. A year later, Zhu \cite{zhu2001circular} showed that a girth of at least $8t-3$ is sufficient, and in 2003, Borodin et al. \cite{borodinEtAl} improved upon this by showing a girth of at least $\frac{20t-3}{3}$ suffices. Progress stalled for a decade until in 2013, Lov\'asz et al. \cite{lovasz2013nowhere} showed that $6t$-edge-connected graph admits a modulo $(2t+1)$-orientation. As a corollary to this, they obtain that every planar graph of girth at least $6t$ admits a homomorphism to $C_{2t+1}$.  This is the best known general bound, though in the $t=2$ case Dvo{\v{r}}{\'a}k and Postle  \cite{dvovrak2017density} showed that every planar graph of odd girth at least 11 (and hence of girth at least 10) admits a homomorphism to $C_5$. 

Our first main result is the following.  

\begin{thm}\label{girth16}
If $G$ is a planar graph with girth at least 16, then $G$ admits a homomorphism to $C_7$.
\end{thm}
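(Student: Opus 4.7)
The plan is to derive Theorem~\ref{girth16} as a consequence of the density bound on $C_7$-critical graphs stated in the abstract. Suppose for contradiction that $G$ is a planar graph of girth at least $16$ admitting no homomorphism to $C_7$. By passing to an edge-minimal such subgraph, I may assume $G$ itself is $C_7$-critical. Since $\operatorname{girth}(G)\geq 16 > 5$, clearly $G\notin\{C_3,C_5\}$, and so the density bound yields
\[
e(G) \;\geq\; \tfrac{17v(G) - 2}{15}.
\]

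To contradict this, I would invoke planarity via Euler's formula. Since each face of $G$ has boundary length at least the girth $16$, we have $2e(G)\geq 16 f(G)$; combined with $v(G)-e(G)+f(G)=2$, this yields the planar upper bound $e(G)\leq \tfrac{8(v(G)-2)}{7}$.

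The main obstacle arises in comparing these two bounds: since $\tfrac{17}{15}<\tfrac{8}{7}$, the naive combination of the density lower bound and the Euler upper bound only forces $v(G)\geq 226$ rather than giving an immediate contradiction. In other words, the density bound is calibrated to be exactly tight for planar graphs of girth $17$, not $16$, and one has to close this gap of one in the girth. I would attempt to do so by strengthening the Euler side: specifically, I would prove a structural reducibility lemma asserting that a $C_7$-critical planar graph of girth at least $16$ contains no face of boundary length exactly $16$. Ruling out $16$-faces forces $2e(G)\geq 17 f(G)$, and hence tightens the Euler bound to $e(G)\leq \tfrac{17(v(G)-2)}{15}$, which is strictly smaller than the density lower bound $\tfrac{17v(G)-2}{15}$ — yielding the desired contradiction.

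The hard part is thus the structural reducibility lemma forbidding $16$-faces. I would expect to prove this by a local homomorphism-extension argument: given a hypothetical $16$-face $F$, remove (or contract) a carefully chosen set of edges or vertices incident to $F$, use $C_7$-criticality of $G$ to obtain a homomorphism of the resulting proper subgraph to $C_7$, and then extend this homomorphism across $F$ to obtain a homomorphism of $G$ to $C_7$, contradicting criticality. Such extension arguments for $C_7$ are considerably more delicate than for $C_3$ or $C_5$, since the target cycle has more vertices and less forgiving local structure; the reducibility lemmas available will dictate exactly how long the excluded cycles must be, and matching the $16$-girth threshold will be the most technical step of the argument, modelled on analogous $C_5$-extension arguments in \cite{dvovrak2017density}.
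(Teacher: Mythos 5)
Your proposal correctly diagnoses the gap: the density bound $e(G)\geq\tfrac{17v(G)-2}{15}$ together with the naive Euler inequality $e(G)\leq\tfrac{8(v(G)-2)}{7}$ for girth $16$ does not yield a contradiction, and one must sharpen the face count so that every face contributes at least $17$ to $2e(G)$. However, the way you propose to close this gap --- a yet-to-be-proved reducibility lemma that a $C_7$-critical planar graph of girth $\geq 16$ has no $16$-face, proved by deleting a $16$-face and re-extending --- is not what the paper does, and I do not see how to make it work as sketched.

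The paper takes a different and cleaner route that you should compare against. It first observes that girth $\geq 16$ implies odd girth $\geq 17$, and then proves the stronger Theorem~\ref{oddcycles}: every planar graph of odd girth at least $17$ admits a homomorphism to $C_7$. For that it invokes the Folding Lemma of Klostermeyer and Zhang: in a planar graph of odd girth $k$, any face of length $r\neq k$ can be folded (two of its vertices at distance two along the face identified) without decreasing the odd girth. Since identification gives a homomorphism from the original graph to the folded one, a vertex-minimum counterexample cannot admit such a fold, so \emph{all} of its faces have length exactly equal to the odd girth $k\geq 17$. Euler's formula then gives $e(G)=\tfrac{k(v(G)-2)}{k-2}$, and since the minimum counterexample is $C_7$-critical and not $C_3$ or $C_5$, Theorem~\ref{main} gives $e(G)\geq\tfrac{17v(G)-2}{15}$; these are incompatible for every $k\geq 17$. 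The crucial point is that the Folding Lemma is applied to the planar graph as a planar graph with given odd girth, not to a $C_7$-critical graph, so it never needs to preserve criticality or girth.

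This is where your sketch has a genuine gap. First, you never prove your reducibility lemma, and the approach you outline --- remove vertices or edges of a $16$-face $F$, apply criticality to colour the rest, and extend across $F$ --- faces a real obstruction: the boundary of $F$ may contain many vertices of high degree, so deleting $F$ does not leave a free $16$-cycle to colour, and the extension problem for a long path or cycle into $C_7$ with both endpoints constrained is exactly the kind of delicate argument that occupies most of Sections~3 and~4 of the paper, not something to be handled locally at one face. Second, even if you could fold a $16$-face, folding does not preserve girth $\geq 16$: it may create short even cycles. So your framework of working with girth and with $C_7$-critical subgraphs does not iterate. The paper sidesteps both problems at once by relaxing to odd girth (which folding does preserve) and by letting the minimum be taken over all planar graphs of odd girth $\geq 17$ rather than over critical graphs with a girth constraint. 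In short, the ``structural reducibility lemma forbidding $16$-faces'' you would need is essentially the Folding Lemma, and proving it from scratch in the narrower setting of $C_7$-critical graphs of girth $\geq 16$ is harder and less natural than citing the known lemma after passing to odd girth.
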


This stems from Theorem \ref{main}, below, in which we bound the density of $C_7$-critical graphs. We note Cranston and Li proved Theorem \ref{girth16} independently in \cite{cranston2020circular}, as a consequence of a more technical theorem regarding graph boundaries. (Though the consequences of our works for Conjecture \ref{jaeger} are the same, our density theorem and their theorem do not imply each other.) A trivial density bound for $C_{2t+1}$-critical graphs arises from the fact that they have minimum degree 2 (see Lemma \ref{cut}). This tells us that if $G$ is a $C_{2t+1}$-critical graph, then $e(G) \geq v(G)$. Unfortunately, we cannot beat this bound in the general case as for $t \geq 1$, the $(2t-1)$-cycle is $C_{2t+1}$-critical. However, we can do better if we assume $G$ contains a vertex of degree at least 3. In this case, a straightforward discharging argument shows that if $G$ is a $C_{2t+1}$-critical graph that contains a vertex of degree at least 3, then $e(G) \geq \left( 1 + \tfrac{1}{4t}\right)v(G) + \tfrac{1}{3t}$ (see Lemma \ref{basic-density}).

Ours are not the first density results regarding $C_{2t+1}$-critical graphs. In \cite{borodinEtAl}, Borodin et al. show that if $G$ is a $C_{2t+1}$-critical graph with girth at least $6t-2$, then $G$ contains a subgraph $G'$ with $e(G') \geq (1+\tfrac{3}{10t-4})v(G')$. In \cite{dvovrak2017density}, Dvo{\v{r}}{\'a}k and Postle give the best-known result for the $t=2$ case by showing that every $C_5$-critical graph on at least four vertices has $e(G) \geq \frac{5v(G)-2}{4}$.  

Our second main result, which concerns the density of $C_7$-critical graphs, is the following.

\begin{restatable}{thm}{main}
\label{main} Let $G$ be a $C_7$-critical graph. If $G \not \in \{C_3, C_5\}$, then $e(G) \geq \tfrac{17v(G)-2}{15}$.
\end{restatable}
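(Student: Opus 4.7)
The plan is to argue by contradiction: let $G$ be a $C_7$-critical graph with $G \notin \{C_3, C_5\}$ and $e(G) < \frac{17 v(G) - 2}{15}$, chosen to minimize $v(G)$. Define the potential of a subgraph $H \subseteq G$ by $p(H) := 17 v(H) - 15 e(H)$; the hypothesis gives $p(G) \geq 3$ (since $p$ is an integer), and the goal is to derive a contradiction. This is precisely the potential method used by Dvo\v{r}\'ak and Postle in \cite{dvovrak2017density} for $C_5$-critical graphs, and I expect the proof here to mirror its two-phase structure: a structural phase identifying reducible configurations, followed by a discharging phase.

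The first, structural phase compiles a library of reducible configurations. Starting from the fact that $G$ is $2$-edge-connected with minimum degree $2$ (cf.\ Lemma \ref{cut}), and using that in $C_7$ any two vertices are joined by walks of every length at least $6$, one concludes that every thread of $G$ has at most $4$ internal degree-$2$ vertices: a longer thread would let any $C_7$-homomorphism of $G$ minus its interior (existing by criticality and minimality) be extended, contradicting $C_7$-criticality of $G$. The more delicate reductions constrain low-degree vertices with several short threads attached; in particular, I would prove that a degree-$3$ vertex cannot host three sufficiently long incident threads simultaneously, since doing so again forces an extension. More generally, the heart of the potential-method argument is a reducibility lemma of the form ``any proper subgraph $H \subsetneq G$ with $p(H)$ below a prescribed threshold can be combined with a homomorphism of $G - (V(H) \setminus \partial H)$ to yield a homomorphism of all of $G$,'' which I would prove by a careful modulo-$7$ case analysis on walks in $C_7$.

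With the reducible configurations in hand, the second phase is a discharging argument on the initial charge $\mu(v) := 15 d(v) - 34$, which satisfies $\sum_v \mu(v) = 30 e(G) - 34 v(G) \leq -5$ under our hypothesis. Vertices of degree $\geq 3$ send charge outward along their incident threads to degree-$2$ vertices, with the amounts calibrated so that, in the absence of the forbidden configurations, every vertex ends with nonnegative charge, producing the desired contradiction. The main obstacle is the calibration at degree-$3$ vertices: each begins with only $11$ units but could, absent the structural restrictions, be incident to three threads containing up to $4$ degree-$2$ vertices apiece (each costing $4$ to bring to zero). Closing this gap relies entirely on the refined reducible configurations; establishing these---especially those involving multiple degree-$3$ vertices linked by short threads---and verifying that the discharging rules are tight against every permitted configuration is where the bulk of the technical work will lie.
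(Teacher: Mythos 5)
Your plan is on the right track and matches the paper's high-level strategy almost exactly: define $p(H)=17v(H)-15e(H)$, take a minimum counterexample with $p(G)\geq 3$ (note $\sum_v\mu(v) = -2p(G)\leq -6$, not merely $\leq -5$), bound string lengths via extendability of partial homomorphisms along degree-$2$ paths, prove a potential-based reducibility lemma for low-potential subgraphs, and discharge with initial charge $15\deg(v)-34$. However, you have correctly flagged but not resolved the real difficulty, and that is where your sketch diverges from a proof. The paper's resolution has two ideas your outline does not anticipate. First, $7$-cycles (``cells'') are treated as supervertices that absorb the charge of all their members and then dispense it to nearby poor degree-$3$ vertices; the bound $\textrm{wt}(C)\leq 5\deg(C)-7$ for a cell $C$ (analogous to the vertex weight bound) is what makes this viable. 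Second, the discharging rules are applied \emph{sequentially} (cells first, then degree-$\geq 4$ vertices, then light degree-$3$ vertices) so that by the time a poor vertex must receive charge, its neighbourhood structure is already strongly constrained. Establishing that this works requires a substantial library of structural lemmas characterizing intersections of $7$- and $9$-cycles (to show $7$-cycles are vertex-disjoint and $9$-cycles nearly so), and ruling out specific vertex types such as $(4,4,k)$, $(4,3,k)$ with $k\geq 1$, and $(3,3,2)$. These are exactly the ``refined reducible configurations'' you gesture at, so your proposal is a correct plan but stops short of the content that actually closes the deficit at weight-$\geq 6$ degree-$3$ vertices.
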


From Theorem \ref{main} and using Euler's formula for graphs embedded in surfaces, we immediately obtain the following result. 
 
\begin{thm}\label{girth17}
If $G$ is a planar or projective planar graph of girth at least 17, then $G$ admits a homomorphism to $C_7$.
\end{thm}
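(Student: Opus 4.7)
The plan is to derive Theorem \ref{girth17} from Theorem \ref{main} via Euler's formula, arguing by contradiction. Suppose for a contradiction that some planar or projective planar graph $G$ of girth at least $17$ admits no homomorphism to $C_7$. By passing to a subgraph $H \subseteq G$ that is minimal with respect to this property, I may assume $H$ is $C_7$-critical. Since girth is monotone under taking subgraphs, $H$ still has girth at least $17$, so in particular $H \notin \{C_3, C_5\}$. Theorem \ref{main} therefore applies and yields the lower bound
\[
e(H) \geq \frac{17\,v(H)-2}{15}.
\]

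For the matching upper bound, I would fix an embedding of $H$ in a surface of Euler genus $g$, where $g=0$ in the planar case and $g=1$ in the projective planar case. By Euler's formula, $v(H) - e(H) + f(H) \geq 2 - g$, and since every face boundary walk has length at least $17$, we also have $2\,e(H) \geq 17\,f(H)$. Eliminating $f(H)$ between these two inequalities produces the standard bound
\[
e(H) \leq \frac{17\bigl(v(H) - 2 + g\bigr)}{15}.
\]

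Combining with the lower bound from Theorem \ref{main} gives $17\,v(H) - 2 \leq 17\,v(H) - 34 + 17g$, i.e.\ $17g \geq 32$, which contradicts $g \leq 1$ in both cases. I do not anticipate any real obstacle here: once Theorem \ref{main} is available, Theorem \ref{girth17} follows from a single Euler-characteristic computation, and the choice of girth bound $17$ (rather than $16$, as in Theorem \ref{girth16}) is exactly what provides enough slack to absorb the extra $+17g$ term coming from the projective plane. The serious work of the paper is of course the proof of Theorem \ref{main} itself, which is where the discharging argument must live.
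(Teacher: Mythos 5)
Your proof is correct and follows the same route the paper (implicitly) takes: pass to a $C_7$-critical subgraph, invoke Theorem \ref{main} for the lower bound, and use Euler's formula together with the fact that every face of a $2$-cell embedding of a graph with minimum degree at least $2$ has boundary length at least the girth (guaranteed here since $H$, being $C_7$-critical, is $2$-connected by Lemma \ref{cut}). The computation $17g \geq 32$ correctly rules out both $g=0$ and $g=1$, confirming why girth $17$ suffices on both surfaces.
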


In order to further lower the girth bound to 16 in the planar case and obtain Theorem \ref{girth16}, we use the following lemma of Klostermeyer and Zhang \cite{klostermeyer20002+}.
\vskip 2mm
\noindent{\textbf{Folding Lemma (Klostermeyer and Zhang \cite{klostermeyer20002+})}  \emph{Let $G$ be a planar graph with odd girth $k$. If $C = v_0 \dots v_{r-1}v_0$ is a cycle in $G$ that bounds a face and $r \neq k$, then there is an integer $i \in  \{0,...,r-1\}$ such that the graph $G'$ obtained from $G$ by identifying $v_{i-1}$ and $v_{i+1}$ (mod $r$) is of odd girth $k$.}}
\vskip 2mm
With this, we obtain from Theorem \ref{girth17} the following theorem:
\begin{thm}\label{oddcycles}
If $G$ is a planar graph with odd girth at least 17, then $G$ admits a homomorphism to $C_7$.
\end{thm}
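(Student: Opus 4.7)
I will prove Theorem \ref{oddcycles} via a minimum-counterexample argument combining the Folding Lemma with the density bound of Theorem \ref{main}. Suppose for contradiction that some planar graph with odd girth at least $17$ does not admit a homomorphism to $C_7$, and choose $G$ among all such graphs to minimize $v(G) + e(G)$.

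Every proper subgraph $H \subsetneq G$ is planar, has odd girth at least $17$ (a subgraph cannot introduce a shorter odd cycle), and satisfies $v(H) + e(H) < v(G) + e(G)$; by minimality, $H$ admits a homomorphism to $C_7$. Combined with $G \not\to C_7$, this forces $G$ to be $C_7$-critical. Since odd girth at least $17$ precludes $G \in \{C_3, C_5\}$, Theorem \ref{main} gives $e(G) \geq (17\,v(G) - 2)/15$.

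Let $k$ denote the odd girth of $G$, so $k$ is odd and $k \geq 17$. I next claim every face of $G$ has length exactly $k$: otherwise, the Folding Lemma produces a planar graph $G'$ with $v(G') = v(G) - 1$, odd girth $k \geq 17$, together with a vertex-identification homomorphism $\pi \colon G \to G'$. By minimality, $G'$ admits a homomorphism $\phi' \colon G' \to C_7$, and $\phi' \circ \pi$ is then a homomorphism $G \to C_7$, contradicting the choice of $G$. Hence every face of $G$ has length $k$, and so Euler's formula combined with $2e(G) = k \cdot f(G)$ gives
\[
e(G) \;=\; \frac{k\,(v(G) - 2)}{k - 2} \;\leq\; \frac{17\,(v(G) - 2)}{15},
\]
using that $k/(k-2)$ is non-increasing and $k \geq 17$. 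Combining with the lower bound from Theorem \ref{main} yields $17\,v(G) - 2 \leq 17\,v(G) - 34$, a contradiction. Therefore $G$ admits a homomorphism to $C_7$.

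The main obstacle will be handling the technicalities of the Folding step: vertex identification can introduce parallel edges (when $v_{i-1}$ and $v_{i+1}$ share a neighbor outside the folded face) or disturb $2$-connectivity, and one must verify that $G'$ still lies in the class to which minimality applies. Parallel edges do not affect homomorphisms to $C_7$, and odd girth is preserved by the Folding Lemma, so passing to the simple underlying graph when needed is harmless. A secondary subtlety is that the Euler computation tacitly assumes $G$ is $2$-connected so that each face is bounded by a cycle of the stated length; this follows from $C_7$-criticality (which forces minimum degree at least $2$ by Lemma \ref{cut}) together with a standard block decomposition, reducing the general case to the $2$-connected case.
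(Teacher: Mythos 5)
Your proof is correct and takes essentially the same route as the paper: establish that a minimum counterexample is $C_7$-critical, apply the Folding Lemma to force all faces to have length equal to the odd girth, and then combine Theorem \ref{main} with Euler's formula to derive a contradiction. You fill in details the paper leaves implicit (2-connectivity so faces are bounded by cycles, the composition of the folding map with the homomorphism, and carrying the odd girth $k$ rather than pinning it to $17$), but the underlying argument is identical.
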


\begin{proof}
By the Folding Lemma, we may assume a minimum counterexample to Theorem \ref{oddcycles} only has faces of length 17. The theorem now follows directly from Theorem \ref{main} and Euler's formula for planar graphs. 
\end{proof}

%======================================================================
%-------------------------OUTLINE--------------------------------------
%======================================================================
\subsection{Outline}
In Section 2, we will present relevant definitions, give general results regarding $C_{2t+1}$-critical graphs, and provide an outline of the proof of Theorem \ref{main}.   In Section 3, we will establish the structural properties of a minimum counterexample to Theorem \ref{main}. Section 4 is dedicated to proving Theorem \ref{main} using discharging and the structure established in the foregoing section. The discharging process used is rather intricate: for one thing, the discharging rules are performed sequentially. This ensures that in the later steps of the discharging process, much of the local structure surrounding the structures receiving charge is known. Additionally, charge is only ever sent along short, nearby paths with internal vertices of degree two. If a structure sends charge to many vertices, it follows from the initial charges that the structure sending charge has large amount of charge to spare. We believe a similar form of sequential discharging may prove useful in proving Conjecture \ref{jaeger}. In Section 5, we will discuss the limits of the methods used here in establishing possible improvements to Theorem \ref{girth16} and Conjecture \ref{jaeger}. 
\end{section}
%======================================================================
%-------------------------PRELIMINARIES STUFF--------------------------
%======================================================================
%======================================================================
%-------------------------PRELIMINARIES--------------------------------
%======================================================================
\begin{section}{Preliminaries}

In this section, we present several results concerning $C_{2t+1}$-critical graphs. In Subsection \ref{Strings}, we establish local structural results regarding the surroundings of vertices of degree at least three. The subsection ends with the proof of Lemma \ref{basic-density}. In Subsection \ref{Cells}, we bound the number of vertices of degree two surrounding $(2t+1)$-cycles in $C_{2t+1}$-critical graphs. In Subsection \ref{Potential}, we define all relevant terms to the \emph{potential method} used in the proof of Theorem \ref{main}. Finally, the section ends with an overview of the structure of the proof of Theorem \ref{main}.

We note that in general, it is only interesting to study $H$-critical graphs when $H$ is a \emph{core}: a graph that does not admit a homomorphism to a proper subgraph of itself. Moreover, by limiting our study to graphs that are $H$-critical for a vertex-transitive core $H$, we ensure the graphs under study are 2-connected. Indeed, we have the following. 

\begin{lemma}\label{cut}
\textbf{\emph{(Folklore)}} Let $H$ be a vertex-transitive graph, and let $G$ be an $H$-critical graph. Then $G$ is 2-connected. 
\end{lemma}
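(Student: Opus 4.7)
The plan is to derive a contradiction by assuming $G$ has a vertex cut of size at most one. First I would dispose of the connectedness step: if $G$ were disconnected, then each connected component is a proper subgraph of $G$, and hence by $H$-criticality each admits a homomorphism to $H$. Taking the union of these homomorphisms yields a homomorphism $G \to H$, contradicting that $G$ itself admits none. So $G$ is connected.

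Next, suppose for contradiction that $G$ has a cut vertex $v$. Then I would write $G = A \cup B$ with subgraphs $A$ and $B$ satisfying $V(A) \cap V(B) = \{v\}$, $E(A) \cap E(B) = \emptyset$, and $|V(A)|, |V(B)| \geq 2$ (so that $v$ has at least one neighbour on each side, and both $A$ and $B$ contain at least one edge). Each of $A$ and $B$ is a proper subgraph of $G$, so by $H$-criticality there exist homomorphisms $\phi_A : A \to H$ and $\phi_B : B \to H$.

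The key step is to splice $\phi_A$ and $\phi_B$ together consistently at $v$. In general $\phi_A(v)$ and $\phi_B(v)$ need not agree, but this is exactly where vertex-transitivity is invoked: there is an automorphism $\sigma$ of $H$ with $\sigma(\phi_B(v)) = \phi_A(v)$. Define $\phi : V(G) \to V(H)$ by $\phi(u) = \phi_A(u)$ for $u \in V(A)$ and $\phi(u) = \sigma(\phi_B(u))$ for $u \in V(B)$; the two definitions agree at $v$ by the choice of $\sigma$. Since $\sigma$ is an automorphism of $H$, the composition $\sigma \circ \phi_B$ is still a homomorphism $B \to H$, so every edge of $A$ and every edge of $B$ is mapped by $\phi$ to an edge of $H$. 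Thus $\phi : G \to H$ is a homomorphism, contradicting that $G$ is $H$-critical.

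There is not really a deep obstacle here; the only subtlety is the use of vertex-transitivity to reconcile the two partial homomorphisms at $v$. Without this hypothesis, $\phi_A(v)$ and $\phi_B(v)$ could lie in distinct orbits of $\operatorname{Aut}(H)$ and no such splicing would be available, which is why the hypothesis cannot simply be dropped. For the target graphs $H = C_{2t+1}$ considered throughout the paper, vertex-transitivity is immediate, so the lemma applies in the regime of interest.
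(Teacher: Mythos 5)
Your proof is correct and fleshes out exactly the argument the paper sketches in one sentence (combining partial homomorphisms at the cut vertex via an automorphism supplied by vertex-transitivity), adding the easy preliminary case of a disconnected $G$. Nothing further is needed.
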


This follows from the fact that if $H$ is vertex-transitive and $G$ is an $H$-critical graph containing a cut vertex $v$, partial homomorphisms of $G$ to $H$ may be combined at $v$ contradicting the fact that $G$ is $H$-critical.

For a set $S \subseteq V(G)$, the neighbourhood of $S$ is denoted $N(S)$ and is defined as $N(S) = \cup_{v \in S} N(v)$. The path with $t$ edges is denoted $P_t$, and will be referred to as the path of \emph{length} $t$. The \emph{internal vertices} of a path are the vertices of the path that are not endpoints of the path.

%======================================================================
%-------------------------STRINGS--------------------------------------
%======================================================================
\subsection{Strings}\label{Strings}

A crucial part of our analysis of graph homomorphisms consists of examining the extensions of partial homomorphisms to the entire graph. Paths with internal vertices of degree 2 play an important role in our investigation, as it is easy to determine the extensions of a partial homomorphism along such paths. In addition, the low-density $C_7$-critical graphs under study in further sections contain a relatively high amount of vertices of degree two. As a consequence, such paths are ubiquitous. For these reasons, we define the following terms.

\begin{defs}\label{def:string}
A \emph{string} in a graph $G$ is a path with internal vertices of degree two and endpoints of degree at least three. A \emph{$k$-string} is a string with $k$ internal vertices. We say a vertex is \emph{incident with a string} if it is an endpoint of the string. Two vertices \emph{share} a string if they are the endpoints of that string.
\end{defs}

If a vertex is incident with many long strings, then its local density is relatively low.  As we aim to lower-bound the density of $C_{2t+1}$-critical graphs, it is useful to be able to bound the number of degree two vertices in the strings incident with vertices of degree at least three.

First, we define the following.
\begin{defs}
Let $G$ be an $H$-critical graph for some graph $H$.  Let $u$ and $v$ be vertices on a path $P$ in $G$ such that the internal vertices of $P$ have degree 2 in $G$. Let $\phi: u \rightarrow H$ be a homomorphism. Let $\Phi$ be the set of extensions of $\phi$ to $P$. We define $B_\phi(v | u, P ) := \{\phi'(v) : \phi' \in \Phi\}$. If the choice of $\phi$ is irrelevant (for instance if we only wish to speak of $|B_\phi(v|u,P)|$), we will sometimes write $B(v|u, P)$.
\end{defs}

We will use the following observation.
\begin{obs}\label{nbrs} If $H$ is an odd cycle, then for any nonempty $S \subsetneq V(H)$, $|N(S)| > |S|$. 
\end{obs}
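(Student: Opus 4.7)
The plan is to exploit the cyclic structure of $H = C_{2t+1}$ by decomposing $V(H)$ according to $S$. I would write $S$ as the disjoint union of its maximal \emph{arcs} (runs of consecutive vertices on the cycle lying in $S$) and $V(H)\setminus S$ as the disjoint union of its maximal \emph{gaps}. Since $S$ is a nonempty proper subset, both decompositions are nonempty, and because arcs and gaps alternate around the cycle there are the same number $k$ of each.

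Next I would compute $|N(S)|$ by separating the contribution from inside $S$ and from outside. A brief check shows that an arc of length $\ell$ contributes $\ell$ vertices to $N(S)\cap S$ when $\ell \geq 2$ and none when $\ell = 1$, while a gap of length $g$ contributes $\min(g,2)$ vertices to $N(S)\setminus S$, namely the one or two gap vertices that are adjacent to an arc. Letting $a$ be the number of length-$1$ arcs and $b$ the number of length-$1$ gaps, summing these contributions gives
\[
|N(S)| - |S| \;=\; 2k - a - b.
\]

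Thus the desired inequality $|N(S)| > |S|$ is equivalent to $a + b < 2k$. Since trivially $a \leq k$ and $b \leq k$, the only way this can fail is $a = b = k$, meaning every arc and every gap has length exactly $1$. But in that case $|V(H)| = 2k$ is even, contradicting the hypothesis that $H$ is an odd cycle. The main (and really the only) point requiring the odd-cycle hypothesis is this final parity contradiction; the rest is a straightforward accounting on a cycle, with the small bookkeeping nuisance being the case split for arcs and gaps of length $1$ when setting up the formula for $|N(S)|$.
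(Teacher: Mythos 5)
The paper states this observation without any proof, so there is no argument to compare against; it is left as folklore. Your proof is correct: writing $|N(S)| = |N(S)\cap S| + |N(S)\setminus S|$ and accounting arc-by-arc and gap-by-gap gives exactly $|N(S)| - |S| = 2k - a - b$, which is always $\geq 0$ since $a,b \leq k$, and equality forces $a = b = k$, i.e.\ every arc and gap is a singleton and $|V(H)| = 2k$ is even. It would be worth stating explicitly that $k \geq 1$ because $S$ is nonempty and proper, so the parity contradiction is genuinely reached.

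For comparison, there is a slicker route that sidesteps the length-$1$ case analysis. Let $\sigma$ be the rotation $v_i \mapsto v_{i+1}$ of $H = C_n$. Then $N(S) = \sigma(S) \cup \sigma^{-1}(S)$, so $|N(S)| \geq |\sigma(S)| = |S|$, with equality if and only if $\sigma(S) = \sigma^{-1}(S)$, i.e.\ $\sigma^2(S) = S$. Since $n$ is odd, $\sigma^2$ generates the full cyclic rotation group, so $\sigma^2$-invariance forces $S \in \{\emptyset, V(H)\}$, which is excluded. This makes the role of the odd-girth hypothesis ($\gcd(2,n)=1$) transparent without any bookkeeping over arc and gap lengths, whereas your version is more concrete and gives the exact value of $|N(S)| - |S|$.
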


 We note that the above observation as well as Lemmas \ref{vartheta}, \ref{max-str}, and \ref{weight} readily generalize to $H$-critical graphs when $H$ is any non-bipartite, vertex-transitive graph. 
%-------------------------------------- legal options
\begin{lemma}\label{vartheta}
Let $G$ and $H$ be graphs, and suppose $H$ is an odd cycle. Let $P = v_0v_1...v_{k+1}$ be a path in $G$ with $k+1$ edges, with $\deg_G(v) = 2$ for each  $v \in V(P)\setminus\{v_0, v_{k+1}\}$. Let $\phi: v_0 \rightarrow H$ be a homomorphism. Then $|B_\phi(v_{k+1} | v_0, P)| \geq  \min(k+2, v(H))$.
\end{lemma}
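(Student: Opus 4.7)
The plan is to prove by induction on $i \in \{0, 1, \dots, k+1\}$ the stronger claim that $|B_\phi(v_i \mid v_0, P)| \geq \min(i+1, v(H))$. Setting $i = k+1$ then yields the lemma. The base case $i = 0$ is trivial since $B_\phi(v_0 \mid v_0, P) = \{\phi(v_0)\}$.

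The heart of the argument is the recurrence
\[
B_\phi(v_i \mid v_0, P) \;=\; N_H\bigl(B_\phi(v_{i-1} \mid v_0, P)\bigr) \qquad \text{for each } 1 \leq i \leq k+1.
\]
The forward inclusion is immediate from the homomorphism property, since adjacent vertices in $P$ must map to adjacent vertices in $H$. For the reverse inclusion, given $c \in N_H(B_\phi(v_{i-1} \mid v_0, P))$, I would pick a neighbour $c' \in B_\phi(v_{i-1} \mid v_0, P)$ of $c$, take the extension $\phi''$ of $\phi$ realized at $v_{i-1}$, and then define $\phi'(v_i) := c$. This must be extended along $v_{i+1}, \dots, v_{k+1}$; since each of $v_{i+1}, \dots, v_k$ has degree exactly $2$ in $G$ (so the only homomorphism constraints come from its two neighbours along $P$), and since $H$ has no isolated vertices, one may greedily extend by repeatedly choosing any $H$-neighbour of the most recently assigned image. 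This step is where the degree-$2$ hypothesis is essential.

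For the inductive step, given $|B_\phi(v_{i-1} \mid v_0, P)| \geq \min(i, v(H))$, I would split on whether $B_\phi(v_{i-1} \mid v_0, P) = V(H)$. If so, then $B_\phi(v_i \mid v_0, P) = N_H(V(H)) = V(H)$, since $H$ has no isolated vertices, and we are done. Otherwise $B_\phi(v_{i-1} \mid v_0, P)$ is a nonempty proper subset of $V(H)$, and Observation \ref{nbrs} yields
\[
|B_\phi(v_i \mid v_0, P)| \;=\; |N_H(B_\phi(v_{i-1} \mid v_0, P))| \;>\; |B_\phi(v_{i-1} \mid v_0, P)| \;\geq\; \min(i, v(H)),
\]
so $|B_\phi(v_i \mid v_0, P)| \geq \min(i+1, v(H))$, completing the induction.

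The main (and only real) obstacle is verifying the recurrence cleanly; once that is in hand, the lemma reduces to iterating Observation \ref{nbrs}, which is exactly the vertex-expansion property of odd cycles that ultimately forces the $B$-set to grow by at least one at each step until it saturates at $V(H)$.
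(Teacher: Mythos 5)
Your proof is correct, and it takes exactly the route the paper indicates: the paper defers the proof to Borodin et al., noting only that the lemma is a consequence of Observation~\ref{nbrs}, and your argument is precisely the expected iteration of that vertex-expansion property along the path (with the greedy-extension step correctly justifying the recurrence $B_\phi(v_i \mid v_0, P) = N_H(B_\phi(v_{i-1} \mid v_0, P))$ and hence that the definition via extensions to all of $P$ agrees with extensions to the subpath $v_0\cdots v_i$). No gaps.
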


This is a consequence of Observation \ref{nbrs} and is proved by Borodin et al. in Example 2.2 of \cite{borodinEtAl}.

We are now equipped to restrict the length of strings in $C_{2t+1}$-critical graphs.
%------------------------------------------- Max Strings
\begin{lemma}\label{max-str}
Let $H$ be an odd cycle. If $G$ is an $H$-critical graph, then $G$ does not contain a $k$-string with $k \geq v(H)-2$. 
\end{lemma}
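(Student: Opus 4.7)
The plan is to argue by contradiction. Suppose $G$ is $H$-critical and contains a $k$-string $P = v_0 v_1 \ldots v_{k+1}$ with $k \geq v(H) - 2$. The strategy is to delete the internal vertices of $P$, invoke $H$-criticality to get a homomorphism on what remains, and then use Lemma \ref{vartheta} to recolour the internal vertices compatibly, extending the homomorphism to all of $G$ and contradicting $H$-criticality.

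Concretely, I would set $G' := G - \{v_1, \ldots, v_k\}$. Since $H$ is an odd cycle we have $v(H) \geq 3$, so $k \geq 1$ and $G'$ is a proper subgraph of $G$. Hence $H$-criticality provides a homomorphism $\phi : G' \to H$; in particular $\phi(v_0)$ and $\phi(v_{k+1})$ are both already determined. Applying Lemma \ref{vartheta} to the path $P$ with the restriction of $\phi$ to $\{v_0\}$ yields
\[
|B_\phi(v_{k+1} \mid v_0, P)| \geq \min(k+2, v(H)) = v(H),
\]
so in fact $B_\phi(v_{k+1} \mid v_0, P) = V(H)$. In particular, the predetermined value $\phi(v_{k+1})$ lies in this set.

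It follows that there is an extension $\phi'$ of $\phi|_{\{v_0\}}$ to a homomorphism $P \to H$ with $\phi'(v_{k+1}) = \phi(v_{k+1})$. Defining a map on $V(G)$ by taking $\phi$ outside $\{v_1, \ldots, v_k\}$ and $\phi'$ on $\{v_1, \ldots, v_k\}$ then gives a homomorphism $G \to H$: every edge of $G$ lies either in $G'$ or entirely within $P$ (each internal vertex of the string has degree exactly $2$ in $G$), and the two pieces of the definition agree on $v_0$ and $v_{k+1}$. This contradicts $H$-criticality. I do not anticipate any substantive obstacle; the argument is essentially just a direct application of Lemma \ref{vartheta}, and the only care needed is to verify that $G'$ is truly a proper subgraph (which requires $k \geq 1$) and that the combined map is well defined at the shared vertices $v_0$ and $v_{k+1}$.
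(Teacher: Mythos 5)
Your proof is correct and follows essentially the same approach as the paper: delete the interior of the string, obtain a homomorphism of the remainder by criticality, and invoke Lemma \ref{vartheta} to show the set of attainable images of the far endpoint is all of $V(H)$, so the homomorphism extends back. The only cosmetic difference is that the paper applies Lemma \ref{vartheta} to a subpath with exactly $v(H)-2$ internal vertices, while you apply it to the whole string; both give $\min(k+2, v(H)) = v(H)$ and the same conclusion.
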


\begin{proof}
Suppose not: that is, suppose $P = v_0v_1v_2\cdots v_{v(H)-1}$ is a subpath of a string in $G$. Since $G$ is $H$-critical, $G - \{v_1, \cdots v_{v(H)-2}\}$ admits a homomorphism $\phi$ to $H$. By Lemma \ref{vartheta},  $|B_\phi(v_{v(H)-1} | v_0, P)| \geq \min(v(H), v(H)) = v(H)$. Hence $\phi(v_{v(H)-1}) \in B_\phi(v_{v(H)-1}|v_0, P)$, and so $\phi$ extends to $G$. This contradicts the fact that $G$ is $H$-critical.
\end{proof}

Next, we show that $C_{2t+1}$-critical graphs do not contain two vertices that share distinct strings with the same number of vertices modulo 2. 

\begin{lemma}\label{par-str}
Let $H$ be an odd cycle, and let $G$ be $H$-critical. Let $S = uv_1v_2\cdots v_k v$ be a $k$-string in $G$, and let $P = uw_1w_2 \cdots w_tv$ be a $(u,v)$-path in $G-E(S)$. Then either $t > k$, or $t \not \equiv k \mod 2$.
\end{lemma}
\begin{proof}
Suppose not. As $G$ is $H$-critical, $G-E(S)$ admits a homomorphism $\phi$ to $H$. But $\phi$ extends to $G$ in the following way: for $i = 1,\cdots, t$, let $\phi(v_i) = \phi(w_i)$. For $t < i \leq k$ with $i \not \equiv t \mod 2$, let $\phi(v_i) = \phi(v)$. For $t < i < k$ with $i \equiv t \mod 2$, let $\phi(v_i) = \phi(w_t)$. This contradicts the fact that $G$ is $H$-critical.
\end{proof}

Lemma \ref{max-str} gives us a bound on the local density surrounding a vertex of degree at least 3, but a better bound arises by considering the entire set of strings incident with the vertex rather than each string individually. To that end, we define the \emph{weight} of a vertex as follows.

\begin{defs}
Let $G$ be a graph, and let $v \in V(G)$ be a vertex of degree $d \geq 3$, and let $k_1, k_2, \dots, k_d$ be integers with $k_1 \geq \dots \geq k_d$.  If $v$ is incident with $d$ distinct strings $S_1, \dots, S_d$ where $S_i$ is a $k_i$-string for each $1 \leq i \leq d$, we say $v$ is of \emph{type} $(k_1, \dots, k_d)$.  If $v$ is a vertex of type $(k_1, ..., k_d)$, we define the \emph{weight of $v$} as $\textrm{wt}(v) = \sum_{i=1}^d k_i$.
\end{defs}

Recall that by Lemma \ref{cut}, if $G$ is a $C_{2t+1}$-critical graph then $G$ does not contain a vertex that is both endpoints of a string: hence the type of a vertex in a $C_{2t+1}$-critical graph is well-defined. 

We bound the weight of vertices as follows. Note the lemma below and its proof are found in \cite{zhu2001circular} (Lemma 3.3).

%----------------------------------------- Weight of Vertices
\begin{lemma}\label{weight} If $H$ is an odd cycle, $G$ is an $H$-critical graph, and $v \in V(G)$, then
$\emph{\textrm{wt}}(v) \leq (v(H)-2)\deg(v)-v(H). 
$
\end{lemma}

%\begin{proof}
% Let $v$ be a vertex of type $(k_1, \dots, k_d)$, incident with a $k_i$-string $S_i$ with other endpoint $v_i$, for $1 \leq i \leq d$.  Note since $G$ is $H$-critical, it does not admit a homomorphism to $H$, and thus if $\phi$ is a homomorphism from $G \setminus (\cup_{i=1}^d(S_i - v_i))$ to $H$, we have that $V(H) \setminus (\cap_{i=1}^d B_\phi(v | v_i, S_i)) = V(H)$. Therefore $\sum_{i=1}^d (v(H) - |B_\phi(v |v_i, S_i)|) \geq v(H)$. By Lemma \ref{vartheta}, for each $1 \leq i \leq d$ we have $|B_\phi(v | v_i, S_i)| \geq \min(k_i+2, v(H))$. By Lemma \ref{max-str}, $k_i+2 < v(H)$ for each $1 \leq i \leq d$, and hence $|B_\phi(v |v_i, S_i)| \geq k_i + 2$. Thus $\sum_{i=1}^d (v(H) -(k_i+2))  \geq v(H)$. Using the fact that $\sum_{i=1}^d k_i = \textrm{wt}(v)$ and rearranging, we get $d(v(H)-2) - \textrm{wt}(v) \geq v(H)$. Therefore $\textrm{wt}(v) \leq (v(H)-2)\deg(v)-v(H)$, as desired.
%\end{proof}

Having established the required definitions, we are now equipped to show the following.

\begin{lemma}\label{basic-density}
If $G$ is a $C_{2t+1}$-critical graph that contains a vertex of degree at least 3, then \begin{equation*}
    e(G) \geq (1 + \tfrac{1}{4t})v(G) + \tfrac{1}{3t}.
\end{equation*} 
\end{lemma}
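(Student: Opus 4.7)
My plan is a single-rule discharging argument driven entirely by Lemma \ref{weight}. I would assign each vertex $v$ the initial charge $\mu(v) = \deg(v) - 2 - \tfrac{1}{2t}$, so that
\begin{equation*}
    \sum_{v \in V(G)} \mu(v) = 2e(G) - \left(2 + \tfrac{1}{2t}\right)v(G),
\end{equation*}
and after rearranging, the target inequality is equivalent to the bound $\sum_v \mu(v) \geq \tfrac{2}{3t}$.

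The discharging rule is: each degree-$2$ vertex $w$ receives $\tfrac{1}{4t}$ from each of the two endpoints of the (unique) string containing $w$. This is well-defined: since $G$ has a vertex of degree $\geq 3$, every degree-$2$ vertex lies on some string. Every degree-$2$ vertex then ends with final charge $-\tfrac{1}{2t} + 2\cdot\tfrac{1}{4t} = 0$. A vertex $v$ of degree $d \geq 3$ and type $(k_1, \dots, k_d)$ ships out exactly $\tfrac{1}{4t}\sum_i k_i = \tfrac{\textrm{wt}(v)}{4t}$, so by Lemma \ref{weight} its final charge is at least
\begin{equation*}
    d - 2 - \tfrac{1}{2t} - \tfrac{(2t-1)d - (2t+1)}{4t} \;=\; \tfrac{(2t+1)d - 6t - 1}{4t},
\end{equation*}
which equals $\tfrac{1}{2t}$ when $d = 3$ and only grows with $d$.

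To close, I need $G$ to contain at least two vertices of degree $\geq 3$. The hypothesis gives one such vertex $v$; by the remark following Lemma \ref{weight} (a consequence of $2$-connectivity via Lemma \ref{cut}), no vertex of $G$ is both endpoints of a string, so the other endpoint of any string at $v$ is a distinct vertex of degree $\geq 3$. Hence the total final charge is at least $2 \cdot \tfrac{1}{2t} = \tfrac{1}{t} \geq \tfrac{2}{3t}$, and rearranging yields the claimed bound (in fact a slightly stronger constant term). I do not anticipate any substantive obstacle: the whole argument reduces to plugging Lemma \ref{weight} into a naturally chosen charge scheme, the only subtle step being the appeal to $2$-connectivity to supply the second high-degree vertex for free.
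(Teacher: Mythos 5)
Your proof is correct and is essentially the paper's proof with the charges rescaled by a factor of $\tfrac{1}{4t}$ (the paper uses $ch_0(v) = 4t\deg(v)-8t-2 = 4t\mu(v)$, a transfer of $1$ per degree-two vertex per string endpoint, and phrases the argument as a contradiction rather than a direct bound, but the computations are identical). The appeal to Lemma \ref{cut} to produce a second vertex of degree at least three is also exactly the paper's closing step.
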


\begin{proof}
Suppose not. Let $G$ be a counterexample. Since $e(G) < (1+ \tfrac{1}{4t})v(G) + \frac{1}{3t}$, it follows that $4te(G)-(4t+1)v(G) < \tfrac{4}{3}$. We will assign an initial charge of $ch_0(v) = 4t\deg(v)-8t-2$ to each vertex $v \in V(G)$, so that $\sum_{v \in V(G)} ch_0(v) = \sum_{v \in V(G)} (4t\deg(v)-8t-2) = 2(4te(G)-(4t+1)v(G)) < \tfrac{8}{3}$.

We discharge according to the following rule to obtain a final charge $ch_1(v)$ for each vertex $v \in V(G)$: each vertex of degree $2$ sends $-1$ to the endpoints of the string that contains it. We claim that after discharging every vertex of degree at least 3 has positive charge. 

To see this, let $v$ be a vertex of degree at least 3. Then $ch_1(v) = 4t\deg(v) -8t-2 - \textnormal{wt}(v)$. By Lemma \ref{weight}, $\textnormal{wt}(v) \leq (2t-1)\deg(v) - (2t+1)$, and so it follows that $ch_1(v) \geq  (2t+1)\deg(v)-6t-1$. Since $\deg(v) \geq 3$ and $t \geq 1$, $ch_1(v) \geq 2$. 

Since $v$ is a vertex of degree at least 3, it is the endpoint of a string $S$. By Lemma \ref{cut}, $S$ has two distinct endpoints, each of which have degree at least $3$ by the definition of string. Thus $G$ contains at least two vertices of degree 3. Since every vertex of degree 2 has final charge 0 and $G$ contains at least two vertices of degree 3, it follows that the total sum of the charges is at least 4 \textemdash a contradiction.
\end{proof}

%======================================================================
%-------------------------CELLS----------------------------------------
%======================================================================
\subsection{Cells}\label{Cells}
Cycles of length seven will play an important role in establishing the structure of $C_7$-critical graphs in the following sections. For that reason, we define the following terms.

\begin{defs}\label{celldefs}
A $(2t+1)$-cycle in a $C_{2t+1}$-critical graph is called a \emph{cell}\footnote{We caution the reader that we do not mean to suggest that there is an intrinsic property of $(2t+1)$-cycles that makes them a vital part of the structural analysis for general $C_{2t+1}$-critical graphs. For $C_7$-critical graphs, cells as defined proved a useful tool in our analysis. For values of $t$ larger than 3, it seems likely that $(2t+3)$-cycles and perhaps $(2t+5)$-cycles will prove equally useful in establishing the structure of $C_{2t+1}$-critical graphs.}. A cell $C$ is \emph{incident} with a string $S \not \subseteq C$ if one of the endpoints of the string is contained in the cell.
\end{defs}

In the discharging process in Section 4, cells aggregate and dispense charge in the graph in much the way vertices do. In this way, we think of cells as elementary structures, and treat them as supervertices.  It is therefore unsurprising that notions of cell \emph{weight} and \emph{type} (analogous to their vertex counterparts) prove useful in our analysis.

\begin{defs} The \emph{degree of a cell $C$} is the number of strings incident with $C$. Let $\deg(C) = d$, and let $k_1, k_2, ..., k_d$ be integers with $k_1 \geq ... \geq k_d$. If $C$ is incident with $d$ distinct strings $S_1, \dots, S_d$ where for $1 \leq i \leq d$ $S_i$ is a $k_i$-string, we say $C$ is a cell of \emph{type} $(k_1, \dots, k_d)$. If $C$ is a cell of type $(k_1, ..., k_d)$, we define the \emph{weight of $C$} as $\textrm{wt}(C) = \sum_{i=1}^d k_i$.
\end{defs}

Note the definition does not preclude a cell $C$ from containing both endpoints of a string $S$ with $S \not \subset C$. However, we show in the following lemma that this does not happen. 

%------------------------------------- No Two Endpoints of a String in C
\begin{lemma}\label{stringsincells}
Let $t \geq 1$ be an integer, and let $C$ be a cell in a $C_{2t+1}$-critical graph $G$. Let $S \not \subseteq C$ be a string. At most one of the endpoints of $S$ is contained in $V(C)$.
\end{lemma}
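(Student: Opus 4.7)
The plan is to derive a contradiction by supposing $u, v \in V(C)$ are distinct vertices both serving as endpoints of a string $S$ with $S \not\subseteq C$. Let $P_1, P_2$ be the two arcs of $C$ from $u$ to $v$ of lengths $a$ and $b$ with $a \le b$ and $a + b = 2t + 1$; in particular $a$ and $b$ have opposite parity. Let $S$ be a $k$-string, so $|E(S)| = k + 1$. A preliminary observation is that $S$ is internally disjoint from $C$: otherwise, the degree-$2$ constraint on any internal vertex of $S$ lying on $C$ would force its two $S$-neighbours to coincide with its two $C$-neighbours, and iterating this forces $S$ to trace an arc of $C$ from $u$ to $v$, contradicting $S \not\subseteq C$.

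Next I would form the proper subgraph $G^* = G - E(S) - V(\text{int}(S))$; by $C_{2t+1}$-criticality it admits a homomorphism $\phi$ to $C_{2t+1}$, and by internal disjointness $C \subseteq G^*$. The key observation is that $\phi|_C$ is an isomorphism: since $C$ is an odd cycle of length $2t+1$ and $C_{2t+1}$ has odd girth $2t+1$, the image $\phi(C)$ must contain all of $V(C_{2t+1})$ (else it would contain an odd cycle of length $<2t+1$ inside $C_{2t+1}$). Thus $\phi|_C$ is a bijection between vertex sets of equal size $2t+1$, and consequently an isomorphism $C \to C_{2t+1}$ preserving cyclic distances; in particular, $\phi(u)$ and $\phi(v)$ lie at cyclic distance $a$ in $C_{2t+1}$.

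The next step is to attempt to extend $\phi$ over the internal vertices of $S$. This succeeds iff there is a walk of length $k+1$ in $C_{2t+1}$ between $\phi(u)$ and $\phi(v)$, which happens iff $k + 1 \geq a$ with $k + 1 \equiv a \pmod{2}$, or $k + 1 \geq b$ with $k + 1 \equiv b \pmod{2}$. If the extension succeeds, it yields a homomorphism $G \to C_{2t+1}$, contradicting criticality. Otherwise, since $a$ and $b$ have opposite parity, $k+1$ matches exactly one of them (say $a$, by symmetry) and $k+1 < a$. In that case the cycle $P_2 \cup S$ has length $b + k + 1$, which is odd (since $a + b$ is odd and $k + 1 \equiv a$) and strictly less than $a + b = 2t + 1$. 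So $G$ contains an odd cycle of length less than $2t + 1$.

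To close, I would note that no $C_{2t+1}$-critical graph can contain an odd cycle $O$ of length $< 2t + 1$: for any edge $e \notin E(O)$, the proper subgraph $G - e$ still contains $O$, so $G - e$ has odd girth $< 2t + 1$ and hence admits no homomorphism to $C_{2t+1}$, contradicting criticality. Combined with the $2$-connectedness given by Lemma~\ref{cut}, this forces $G = O$, a single short odd cycle; but such a $G$ has maximum degree $2$ and thus contains no strings, contradicting the existence of $S$. The main obstacle will be keeping the parity bookkeeping straight in the extension step and pinpointing which of $P_1 \cup S$ or $P_2 \cup S$ becomes the short odd cycle in each parity case; once this is set up, the remainder is routine.
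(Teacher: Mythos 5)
Your proof is correct and follows the same essential strategy as the paper: delete the interior of $S$, invoke criticality to get a homomorphism $\phi$ to $C_{2t+1}$, and derive a contradiction either by extending $\phi$ over $S$ or by exhibiting a short odd cycle. The one genuine difference is in the extension step. The paper never needs to know what $\phi$ does to $C$ globally: it first shows $e(S) \geq e(P_1)$ (where $P_1$ is the arc of $C$ with $e(P_1) \equiv e(S) \pmod 2$) from the absence of short odd cycles, and then simply folds $S$ onto $P_1$ via a homomorphism $\varphi : S \to P_1$ fixing $u,v$, extending $\phi$ by $\phi \circ \varphi$. You instead establish that $\phi|_C$ is an isomorphism (the core property of odd cycles), read off the cyclic distance between $\phi(u)$ and $\phi(v)$, and then count walks in $C_{2t+1}$. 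Both are sound; the paper's fold is a bit more economical since $P_1 \subset G\setminus\mathrm{int}(S)$ is already in $\phi$'s domain, so no separate ``isomorphism on $C$'' lemma is needed. On the other hand, you make explicit two small points the paper leaves implicit: that $S$ must be internally disjoint from $C$ (so $P_i \cup S$ is really a cycle and $C$ survives in $G\setminus\mathrm{int}(S)$), and why $G$ cannot itself be a short odd cycle. These are worthwhile to state. One nit: your remark ``say $a$, by symmetry'' is not literal symmetry since you fixed $a \le b$; the case $k+1 \equiv b$, $k+1 < b$ should be handled with $P_1 \cup S$ rather than $P_2 \cup S$, but the computation goes through identically.
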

\begin{proof}
Note that if $G$ contains a string, it contains two vertices of degree at least three and so is not an odd cycle. Since $G$ is $C_{2t+1}$ critical, $G$ does not contain an odd cycle with fewer than $2t+1$ edges, as such a cycle is $C_{2t+1}$-critical itself. The proof then follows easily from Lemma \ref{par-str}. 
%Suppose not: that is, suppose both endpoints $u$ and $v$ of $S$ are contained in $V(C)$. Let $P_1$ and $P_2$ be the two distinct $(u,v)$-paths contained in $C$. Note since $v(C_{2t+1})$ is odd, exactly one of $P_1$ and $P_2$ has an odd number of internal vertices. Without loss of generality, we may assume $e(P_1) \equiv e(S) \mod 2$.  Note since $G$ is $C_{2t+1}$ critical, $G$ does not contain an odd cycle with fewer than $2t+1$ edges, as such a cycle is $C_{2t+1}$-critical itself. It follows that $e(S) \geq e(P_1)$. Since $G$ is $C_{2t+1}$-critical, $G \setminus (S \setminus \{u,v\})$ has a homomorphism $\phi$ to $C_{2t+1}$. But then $\phi$ extends to $G$, since $S$ has a homomorphism $\varphi$ to $P_1$ with $\phi(u) = \varphi(u)$ and $\phi(v) = \varphi(v)$.
\end{proof}

In the spirit of Lemma \ref{weight}, the following lemma provides some restriction on the local structure surrounding a cell in a $C_{2t+1}$-critical graph.
%----------------------------------------- Weight of Cells

\begin{lemma}\label{cell-weight}
Let $G$ be a $C_{2t+1}$-critical graph. If $C$ is a cell of $G$, then
$\emph{\textrm{wt}}(C) \leq (2t-1)\deg(C)-(2t+1)$.
\end{lemma}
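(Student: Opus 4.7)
The plan is to adapt the strategy of Lemma~\ref{weight} with $C$ playing the role of a single supervertex. Let $\deg(C) = d$ and let $S_1, \ldots, S_d$ be the strings incident with $C$, where $S_i$ is a $k_i$-string with one endpoint $u_i \in V(C)$ and the other endpoint $v_i \notin V(C)$ (the latter guaranteed by Lemma~\ref{stringsincells}). Set $G' := G - V(C) - \bigcup_{i=1}^d \mathrm{int}(S_i)$. This is a proper subgraph of $G$, since $2$-connectedness combined with $G \neq C_{2t+1}$ forces $\deg(C) \geq 1$, so $V(C) \neq V(G)$. By criticality, $G'$ admits a homomorphism $\phi$ to $H = C_{2t+1}$.

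Any extension of $\phi$ to all of $G$ decomposes as a homomorphism $\psi \colon C \to H$ together with independent extensions along each $S_i$; by the definition of $B_\phi$, the extension along $S_i$ exists precisely when $\psi(u_i) \in B_\phi(u_i | v_i, S_i)$. Since $G$ is $H$-critical, no global extension exists, and hence for every homomorphism $\psi \colon C \to H$ there is some $i$ with $\psi(u_i) \notin B_\phi(u_i | v_i, S_i)$. A key observation is that, since $C$ and $H$ are both $(2t+1)$-cycles, any homomorphism $\psi \colon C \to H$ is forced to be a signed rotation: traversing $C$, the $\pm 1$ signed steps taken in $H$ must sum to $0 \pmod{2t+1}$, and with $2t+1$ terms of $\pm 1$ this happens only when every sign is the same. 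Consequently $|\Psi| = 2(2t+1)$, where $\Psi$ is the set of all homomorphisms $C \to H$, and for any fixed vertex $u \in V(C)$ and colour $c \in V(H)$, exactly $2$ members of $\Psi$ satisfy $\psi(u) = c$.

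Setting $A_i := \{\psi \in \Psi : \psi(u_i) \in B_\phi(u_i | v_i, S_i)\}$, we therefore have $|A_i| = 2\,|B_\phi(u_i | v_i, S_i)|$, which by Lemma~\ref{vartheta} together with Lemma~\ref{max-str} (the latter ensuring $k_i + 2 \leq 2t < v(H)$, so the minimum in Lemma~\ref{vartheta} is $k_i+2$) is at least $2(k_i+2)$. The fact that $\bigcup_{i=1}^d (\Psi \setminus A_i) = \Psi$ together with the union bound then yields
\[
2(2t+1) \leq \sum_{i=1}^d \left( 2(2t+1) - 2(k_i+2) \right),
\]
and rearranging gives $\mathrm{wt}(C) = \sum_i k_i \leq (2t-1)d - (2t+1)$, as desired. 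The main subtlety is the homomorphism count itself: establishing that every hom $C_{2t+1} \to C_{2t+1}$ is a signed rotation, so that $|\Psi| = 2(2t+1)$ and exactly $2$ homomorphisms pass through each prescribed vertex-colour pair. Once this is in hand, the remainder is the same double-counting used in Lemma~\ref{weight}.
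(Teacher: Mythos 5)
Your proof is correct and takes essentially the same route as the paper: both compute the number of homomorphisms of the cell to $C_{2t+1}$ as $2(2t+1)$, relate the number of such homomorphisms compatible with the string $S_i$ to $2\,|B_\phi(\cdot\,|\,v_i, S_i)|$, apply Lemmas~\ref{vartheta} and~\ref{max-str} to lower-bound this quantity by $2(k_i+2)$, and finish with a union/intersection bound. The only difference is that you spell out the ``signed rotation'' argument justifying the count $|\Psi| = 2(2t+1)$ (and the fact that each vertex-colour pair is hit exactly twice), which the paper simply asserts.
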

\begin{proof}
Let $C$ be a cell of $G$ with $\deg(C) = d$. By Lemma \ref{stringsincells}, $C$ is a cell of type $(k_1, ..., k_d)$, incident with a $k_i$-string $S_i$ for each $1 \leq i \leq d$. We will denote by $c_i$ and $v_i$ the endpoints of each $S_i$, with $v_i \not \in V(C)$. 

Note first there are $2(2t+1)$ homomorphisms of a cell to $C_{2t+1}$.  Given a homomorphism $\phi: v_i \rightarrow C_{2t+1}$, we denote by $B_\phi(C|v_i, S_i)$ the set of possible extensions of $\phi$ to $S_i \cup C$. Note that $ |B_\phi(C|v_i, S_i)| = 2 |B_\phi(c_i| v_i, S_i)|$. Since $G$ does not admit a homomorphism to $C_{2t+1}$, we have $\cap_{i=1}^d B(C |v_i, S_i) = \emptyset$. Therefore $\sum_{i=1}^d (2(2t+1)-|B(C|v_i, S_i)|) \geq 2(2t+1)$. By Lemma \ref{vartheta}, for each $1 \leq i \leq d$ we have $|B(c_i | v_i, P_i)| \geq \min(k_i+2, 2t+1)$. From Lemma \ref{max-str}, since $G$ is $C_{2t+1}$-critical, $k_i+2 < 2t+1$. Therefore $|B(c_i|v_i, P_i)| \geq k_i+2$, and since $|B(C|v_i, S_i)|)| = 2|B(c_i|v_i, P_i)|$ for each $i$, it follows that $\sum_{i=1}^d (2(2t+1)-2(k_i+2)) \geq 2(2t+1)$. Using the fact that $\sum_{i=1}^d k_i = \textrm{wt}(C)$, dividing by 2, and reorganizing, we obtain $\textrm{wt}(C) \leq (2t-1)\deg(C)-(2t+1)$, as desired.
\end{proof}

%======================================================================
%-------------------------POTENTIAL------------------------------------
%======================================================================
\subsection{Potential}\label{Potential}

Our analysis will also rely heavily on the insights gained from identifying vertices in a critical graph and examining the resulting graph. It is useful to be able to speak of undoing the identification process; to that end, we define the following term.

\begin{defs}
Let $u$ and $v$ be non-adjacent vertices in a graph $G$. Let $G'$ be the graph obtained from $G$ by identifying $u$ and $v$ to a new vertex $z$. Let $H$ be a subgraph of $G'$ that contains $z$. Given the identification of $u$ and $v$ to $z$, \emph{splitting $z$ back into $u$ and $v$} refers to deleting $z$ and adding new vertices $u$ and $v$ to $V(H)$, and for each $x \in \{u,v\}$, adding to $E(H)$ all edges of the form $xy$ such that $y \in V(H)$ and $xy \in E(G)$. 
\end{defs}

In both Sections 3 and 4, we will use \emph{potential} to learn about the density of subgraphs of minimum counterexample to Theorem \ref{main}. The potential method used here was popularized by Kostochka and Yancey in \cite{kostochka2014ore} in order to give a lower bound on the number of edges in colour-critical graphs. In \cite{dvovrak2017density}, Dvo{\v{r}}{\'a}k and Postle use potential to bound the density of $C_5$-critical graphs. 

We define potential as follows.

\begin{defs}
Let $\alpha, \beta >0 $ and let $G$ be a graph. The $(\alpha, \beta)$-\emph{potential} of $G$ is given by
\begin{equation*}
    p_{\alpha, \beta}(G) = \alpha v(G) - \beta e(G). 
\end{equation*}
\end{defs}

When $\alpha$ and $\beta$ are clear from the context, we will omit them and speak only of the \emph{potential} of a graph and its subgraphs. Potential on its own is merely a measure of the density of the graph: what makes it a powerful tool for structural analysis is the reduction found in the paragraphs below.

In addition, we will require the following definition. 
\begin{defs}\label{phi(F)}
Let $\phi: G \rightarrow H$ be a homomorphism, and let $F \subseteq G$. We let $\phi(F)$ denote the subgraph of $H$ with $V(\phi(F)) = \{\phi(v) : v \in V(F)\}$ and $E(\phi(F)) = \{\phi(u)\phi(v) : uv \in E(F)\}$.
\end{defs}

\begin{defs}\label{gsubphi}
Let $G$ be an $H$-critical graph, and let $F$ be a proper subgraph of $G$. Let $\phi:F \rightarrow H$ be a homomorphism. Let $G'$ be the graph with $V(G') = V(G\setminus F) \cup V(\phi(F))$, and $E(G') = E(G\setminus F) \cup E(\phi(F))$. For each $u \in \phi(F)$, let $\phi^{-1}(u)$ be the set of vertices of $F$ with image $u$ under $\phi$.   We define $G_F[\phi]$ as the graph obtained from $G'$ by adding an edge $vu$ for each $u \in \phi(F)$ and $v \in V(G)\setminus V(F)$ such that there exists $w \in \phi^{-1}(u)$ with $vw \in E(G)$.
\end{defs}

Let $G, H, F,$ and $\phi$ be as in Definition \ref{gsubphi}. Note $G_F[\phi]$ has no homomorphism to $H$, as such a homomorphism $\phi'$ admits an extension to a homomorphism  $\phi'': V(G) \rightarrow V(H)$ by setting $\phi''(v) = \phi'(\phi(v))$ for each $v \in V(F)$, and $\phi''(v) = \phi'(v)$ for each $v \in V(G \setminus F)$. Thus $G_F[\phi]$ contains an $H$-critical subgraph $W$. Note if $F$ is not isomorphic to a subgraph of $H$, then $G_F[\phi]$ contains fewer vertices than $F$, and hence $W$ contains fewer vertices than $G$. Furthermore, $W \cap \phi(F) \neq \emptyset$ as otherwise $W \subset G$ and so $G$ contains a proper $H$-critical subgraph, contradicting the fact that $G$ is $H$-critical.  This motivates the following definitions.

\begin{defs}\label{potdefs}
Let $G$ be a graph. The subgraph $F'$ of $G$ is an \emph{extension} of $F \subsetneq G$ if there exists a homomorphism $\phi:F \rightarrow H$ and an $H$-critical subgraph $W$ in $G_F[\phi]$ such that $V(F') = V(W \setminus \phi(F)) \cup V(F)$, and $E(F') = E(W\setminus \phi(F)) \cup E(F) \cup E$, where $E$ is a minimal set of edges containing an edge $vu$ for each $u \in \phi(F)$ and $v \in V(W \setminus \phi(F))$ such that there exists $w \in \phi^{-1}(u)$ with $vw \in E(W)$. We call $W$ an \emph{extender} of $F$, and $W[\phi(F)]$ the \emph{source} of the extension.
\end{defs}

Note the source is a subgraph of $H$, and that it is not necessarily induced. 

In order to establish certain structural properties of $C_{2t+1}$-critical graphs in the subsequent section, we will make extensive use of the following lemma. 

%-------------------------- POT-EXT LEMMA
\begin{lemma}\label{potential-extension}
Let $G$ be an $H$-critical graph with potential $p(G)$. Let $F$ be a proper subgraph of $G$ that is not isomorphic to $H$. If $F'$ is an extension of $F$ with extender $W$ and source $X$, then 

\begin{equation*}
    p(F') = p(F) + p(W) - p(X).
\end{equation*}
\end{lemma}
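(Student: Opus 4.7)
The plan is to prove the identity by a direct bookkeeping of the vertex and edge counts of $F'$ from Definition \ref{potdefs}, matching each piece against the corresponding contributions from $F$, $W$, and $X = W[\phi(F)]$. The central observation will be that the extra edge set $E$ used to ``split $\phi(F)$ back into $V(F)$'' in the definition of $F'$ has the same cardinality as the set of edges of $W$ with exactly one endpoint in $V(\phi(F))$; these two quantities will then cancel when we add and subtract them, leaving a clean additive formula.

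First I would verify a disjointness claim: by Definition \ref{gsubphi}, $V(G_F[\phi]) = (V(G) \setminus V(F)) \cup V(\phi(F))$, so $V(W \setminus \phi(F)) \subseteq V(G) \setminus V(F)$ and hence $V(W \setminus \phi(F))$ is disjoint from $V(F)$. This immediately gives $v(F') = v(F) + v(W \setminus \phi(F)) = v(F) + v(W) - v(X)$, using that $v(X) = v(\phi(F))$. For the edge count, I would observe that the three sets $E(W \setminus \phi(F))$, $E(F)$, and $E$ in the definition of $F'$ live in pairwise disjoint unordered pairs of vertex classes (inside $V(W \setminus \phi(F))$, inside $V(F)$, and crossing between the two, respectively), which yields $e(F') = e(W \setminus \phi(F)) + e(F) + |E|$.

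The main obstacle is pinning down $|E|$. By the construction of $G_F[\phi]$, every edge of $W$ of the form $vu$ with $v \in V(W) \setminus V(\phi(F))$ and $u \in V(\phi(F))$ arises from at least one edge $vw \in E(G)$ with $w \in \phi^{-1}(u)$; minimality of $E$ then forces exactly one such edge $vw$ to be retained per pair $(v,u)$, so $|E|$ equals the number of edges of $W$ between $V(\phi(F))$ and $V(W) \setminus V(\phi(F))$. Decomposing $e(W) = e(W \setminus \phi(F)) + |E| + e(X)$ and substituting into the formula for $e(F')$ gives $e(F') = e(W) + e(F) - e(X)$. Combining this with the vertex count and invoking linearity of $p_{\alpha,\beta}$ yields $p(F') = p(F) + p(W) - p(X)$, as desired.
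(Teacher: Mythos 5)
Your proof is correct and follows the same direct-counting approach the paper uses, but you supply the bookkeeping the paper leaves implicit, in particular the argument that $|E|$ equals the number of edges of $W$ crossing between $V(\phi(F))$ and $V(W)\setminus V(\phi(F))$, which is the content that makes the edge count $e(F') = e(F)+e(W)-e(X)$ go through. One small imprecision: you justify $v(W\setminus\phi(F)) = v(W) - v(X)$ by writing ``$v(X) = v(\phi(F))$,'' but the construction only guarantees $W \cap \phi(F) \neq \emptyset$, not $V(\phi(F)) \subseteq V(W)$; the correct (and sufficient) identity is $v(X) = |V(W) \cap V(\phi(F))|$, since $X = W[\phi(F)]$ is induced on exactly that intersection, and this yields the same final conclusion.
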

\begin{proof}
By the definitions of $F'$, $W'$, and $X$ given in \ref{potdefs}, $v(F') = v(F) + v(W) - v(X)$. Furthermore, $e(F') = e(F)+ e(W)- e(X)$. Therefore since $p(F') = \alpha v(F')- \beta e(F')$ and both $\alpha$ and $\beta$ are greater than 0, $p(F')= p(F) + p(W) - p(X)$.
\end{proof}

%======================================================================
%-------------------------PROOF OUTLINE--------------------------------
%======================================================================
\subsection{Outline of the Proof of Theorem \ref{main}}
The proof of Theorem \ref{main} is obtained via reducible configurations and discharging. The structural results of a minimum counterexample to Theorem \ref{main} are rather lengthy, and are found in Section 3. The discharging portion is found in Section 4; it is a bit unusual, as we will discuss later. More specifically, the discharging proceeds as follows.

We let $G$ be a counterexample to Theorem \ref{main} with $v(G)$ minimum and, subject to that, with $e(G)$ minimum. Since potential is integral, it follows that $p(G) \geq 3$. We will assign initial charge of $ch_0(v) = 15\deg(v)-2\textnormal{wt}(v)-34$ to each vertex $v \in V(G)$ with $\deg(v) \geq 3$, and $ch_0(v) = 0$ to each vertex of degree two. Thus $\sum_{v \in V(G)} ch_0(v) = -2p(G) \leq -6$. We aim to show that after discharging, every structure in the graph has non-negative charge, thus arriving at a contradiction. 

Initially, the only structures with negative charge are the degree three vertices with weight at least six.  Using Lemma \ref{max-str} and \ref{weight}, we rule out several such types of vertices with high weight. Using Lemmas \ref{(4,4,k)}, \ref{4,3,k}, and \ref{3,3,2} we will rule out the existence of vertices of type $(3,3,2)$, $(4,4,k)$ where $k \geq 0$, and $(4,3,k)$ where $k \geq 1$. This is accomplished by characterizing the intersection of 7- and 9-cycles in Lemmas \ref{7cycles}, \ref{7and9cycles}, and \ref{9cycles}. As not all degree three vertices with weight at least six can be ruled out, the remainder of the lemmas in Section 3 aim to establish the local structure surrounding the vertices of degree three and the cells and vertices that will later send them charge. In particular, Lemmas \ref{4str}, \ref{3str}, and \ref{deg4s} will establish the neighbouring structure of vertices incident with long strings. In Lemma \ref{deg3cells}, we show that $G$ does not contain cells of low degree. Finally, Lemmas \ref{3,2,2s}, \ref{2,2,2s}, and \ref{(3,3,0)} establish the neighbouring structure of certain types of vertices not contained in cells.  
 
The discharging itself begins with each cell collecting all of the charge of nearby vertices. Next, vertices of degree at least four send charge to the vertices of degree three that were not near cells. Finally, vertices of degree three and low weight send charge to nearby vertices of degree three and weight at least six.

The discharging process used is a bit unusual. For one thing, the rules are performed sequentially. This ensures that in the later steps of the discharging process, much of the local structure surrounding the vertices and cells receiving charge will be known. Finally, the vertices and cells in $G$ only send charge along short strings. This way, the initial charges ensure that if a vertex or cell sends charge to many structures, it follows that the vertex or cell sending charge has relatively low weight and so consequently has a large amount of charge to spare. We believe a similar form of sequential discharging may prove useful in proving Conjecture \ref{jaeger}.
\end{section}
%======================================================================
%-------------------------C7 STUFF-------------------------------------
%======================================================================
%======================================================================
%-------------------------C7-CRIT GRAPHS-------------------------------
%======================================================================
\section{Structure of $C_7$-Critical Graphs}

Our main result is the following:

\main*

As before, we will restate this theorem in terms of potentials. In this section, the \textit{potential}\footnote{In Sections 3 and 4, \emph{potential} denotes the $(17,15)$-potential.} of a graph $G$ will be defined as $p(G) = 17v(G)-15e(G)$. We aim to prove that if $G \not \in \{C_3, C_5\}$ is a $C_7$-critical graph, then  $p(G) \leq 2$. We note this bound is tight: examples of $C_7$-critical graphs with potential 2 can be found in Figure \ref{fig:tight}.

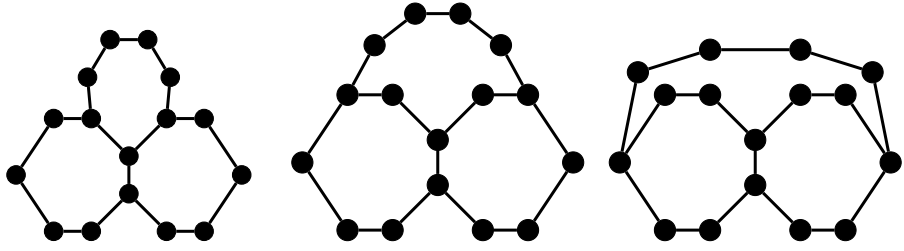
\begin{figure}
	\begin{center}
        
            \begin{tikzpicture}[scale=0.5]	
			\tikzset{black node/.style={shape=circle,draw=black,fill=black,inner sep=0pt, minimum size=7pt}}
            \tikzset{white node/.style={shape=circle,draw=black,fill=white,inner sep=0pt, minimum size=8pt}}
			\tikzset{invisible node/.style={shape=circle,draw=black,fill=black,inner sep=0pt, minimum size=1pt}}
				\tikzset{edge/.style={black, line width=.4mm}}		

                \node[black node] (1) at (0,0){};
                \node[black node] (2) at (0,-1){};
                \node[black node] (3) at (1,-2){};                
                \node[black node] (4) at (2,-2){};                
                \node[black node] (5) at (3,-0.5){};                
                \node[black node] (6) at (2,1){};                
                \node[black node] (7) at (1,1){};
                  
                \node[black node] (8) at (-1,-2){};
                \node[black node] (9) at (-2,-2){};
                \node[black node] (10) at (-3,-0.5){};                
                \node[black node] (11) at (-2,1){};                
                \node[black node] (12) at (-1,1){};                
                
                \node[black node] (13) at (-1.1,2.1){};                
                \node[black node] (14) at (-0.5,3.1){};   
                \node[black node] (15) at (0.5,3.1){};                
                \node[black node] (16) at (1.1,2.1){};
                
                \draw[edge] (1)--(2); 
                \draw[edge] (2)--(3); 
                \draw[edge] (3)--(4); 
                \draw[edge] (4)--(5); 
                \draw[edge] (5)--(6); 
                \draw[edge] (6)--(7); 
                \draw[edge] (7)--(1); 
                \draw[edge] (2)--(8); 
                \draw[edge] (8)--(9); 
                \draw[edge] (9)--(10); 
                \draw[edge] (10)--(11); 
                \draw[edge] (11)--(12); 
                \draw[edge] (12)--(1); 
                \draw[edge] (12)--(13); 
                \draw[edge] (13)--(14); 
                \draw[edge] (14)--(15); 
                \draw[edge] (15)--(16); 
                \draw[edge] (16)--(7);                 
			\end{tikzpicture}
            \hskip 4mm
                        \begin{tikzpicture}[scale=0.6]	
			\tikzset{black node/.style={shape=circle,draw=black,fill=black,inner sep=0pt, minimum size=8pt}}
            \tikzset{white node/.style={shape=circle,draw=black,fill=white,inner sep=0pt, minimum size=8pt}}
			\tikzset{invisible node/.style={shape=circle,draw=black,fill=black,inner sep=0pt, minimum size=1pt}}
				\tikzset{edge/.style={black, line width=.4mm}}		

                \node[black node] (1) at (0,0){};
                \node[black node] (2) at (0,-1){};
                \node[black node] (3) at (1,-2){};                
                \node[black node] (4) at (2,-2){};                
                \node[black node] (5) at (3,-0.5){};                
                \node[black node] (6) at (2,1){};                
                \node[black node] (7) at (1,1){};
                  
                \node[black node] (8) at (-1,-2){};
                \node[black node] (9) at (-2,-2){};
                \node[black node] (10) at (-3,-0.5){};                
                \node[black node] (11) at (-2,1){};                
                \node[black node] (12) at (-1,1){};                
                
                \node[black node] (13) at (-1.4,2.1){};                
                \node[black node] (14) at (-0.5,2.8){};   
                \node[black node] (15) at (0.5,2.8){};                
                \node[black node] (16) at (1.4,2.1){};
                
                \draw[edge] (1)--(2); 
                \draw[edge] (2)--(3); 
                \draw[edge] (3)--(4); 
                \draw[edge] (4)--(5); 
                \draw[edge] (5)--(6); 
                \draw[edge] (6)--(7); 
                \draw[edge] (7)--(1); 
                \draw[edge] (2)--(8); 
                \draw[edge] (8)--(9); 
                \draw[edge] (9)--(10); 
                \draw[edge] (10)--(11); 
                \draw[edge] (11)--(12); 
                \draw[edge] (12)--(1); 
                \draw[edge] (11)--(13); 
                \draw[edge] (13)--(14); 
                \draw[edge] (14)--(15); 
                \draw[edge] (15)--(16); 
                \draw[edge] (16)--(6);                 
			\end{tikzpicture}
            \hskip 2mm
             \begin{tikzpicture}[scale=0.6]	
			\tikzset{black node/.style={shape=circle,draw=black,fill=black,inner sep=0pt, minimum size=8pt}}
            \tikzset{white node/.style={shape=circle,draw=black,fill=white,inner sep=0pt, minimum size=8pt}}
			\tikzset{invisible node/.style={shape=circle,draw=black,fill=black,inner sep=0pt, minimum size=1pt}}
				\tikzset{edge/.style={black, line width=.4mm}}		

                \node[black node] (1) at (0,0){};
                \node[black node] (2) at (0,-1){};
                \node[black node] (3) at (1,-2){};                
                \node[black node] (4) at (2,-2){};                
                \node[black node] (5) at (3,-0.5){};                
                \node[black node] (6) at (2,1){};                
                \node[black node] (7) at (1,1){};
                  
                \node[black node] (8) at (-1,-2){};
                \node[black node] (9) at (-2,-2){};
                \node[black node] (10) at (-3,-0.5){};                
                \node[black node] (11) at (-2,1){};                
                \node[black node] (12) at (-1,1){};                
                
                \node[black node] (13) at (-2.6,1.5){};                
                \node[black node] (14) at (-1,2){};   
                \node[black node] (15) at (1,2){};                
                \node[black node] (16) at (2.6,1.5){};
                
                \draw[edge] (1)--(2); 
                \draw[edge] (2)--(3); 
                \draw[edge] (3)--(4); 
                \draw[edge] (4)--(5); 
                \draw[edge] (5)--(6); 
                \draw[edge] (6)--(7); 
                \draw[edge] (7)--(1); 
                \draw[edge] (2)--(8); 
                \draw[edge] (8)--(9); 
                \draw[edge] (9)--(10); 
                \draw[edge] (10)--(11); 
                \draw[edge] (11)--(12); 
                \draw[edge] (12)--(1); 
                \draw[edge] (10)--(13); 
                \draw[edge] (13)--(14); 
                \draw[edge] (14)--(15); 
                \draw[edge] (15)--(16); 
                \draw[edge] (16)--(5);                 
			\end{tikzpicture}

    \end{center}
	\caption{Examples of $C_7$-critical graphs with potential 2.}
	\label{fig:tight}
\end{figure}

Since potential is integral, a counterexample to Theorem \ref{main} has potential at least $3$.  A \textit{minimum counterexample} to Theorem \ref{main} is a $C_7$-critical graph $G \not \in \{C_3, C_5\}$ with $p(G) \geq 3$, minimal with respect to $v(G)$, and, subject to that, with respect to $e(G)$.

For the remainder of Sections 3 and 4, $G$ will be a minimum counterexample to Theorem \ref{main}. The following two subsections concern the structure of $G$. Section \ref{struc} contains general structural results, and Section \ref{forb} rules out the presence of certain substructures in $G$. 

%======================================================================
%-------------------------STRUCTURE------------------------------------
%======================================================================
\subsection{General Structure Results}\label{struc}

The lemmas in this section provide us with a general framework for $G$. Lemma \ref{potentials7} concerns the potential of subgraphs of $G$, and will be useful in proving further structural lemmas.  Lemma \ref{girth} establishes a lower bound for the girth of $G$. The proofs of Lemmas \ref{4str}, \ref{3str}, and \ref{deg4s} will establish the neighbouring structure of vertices incident with long strings.  Finally, with Lemmas \ref{7cycles}, \ref{7and9cycles}, and \ref{9cycles} we will characterize the intersections of distinct 7-cycles and 9-cycles in $G$. We require the following definition. 

\begin{defs}\label{pth}
Let $H$ be a graph. We denote by $P_t(H)$ the set of graphs obtained from $H$ by adding a path $P$ of length $t$ joining two distinct vertices of $H$, such that the internal vertices of $P$ are disjoint from $V(H)$. 
\end{defs}

Our first structural result in this section concerns the density of subgraphs of $G$.
%----------------------------------------- Potential Lemma
\begin{lemma}\label{potentials7} Let $H$ be a subgraph of $G$.  Then the following all hold:
\begin{enumerate}[label=(\roman*)]
\item $p(H) \geq 3$ if $H = G$,
\item $p(H) \geq 20-2k$ if $G \in P_k(H)$ and $k \in \{3,4,5\}$,
\item $p(H) = 14$ if $H = C_7$, and
\item $p(H) \geq 15$ otherwise.  
\end{enumerate}
\end{lemma}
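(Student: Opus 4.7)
For (i) and (v), direct computation suffices: since $G$ is a counterexample, $15e(G) < 17v(G) - 2$, so $p(G) > 2$, and integrality gives $p(G) \geq 3$; and $p(C_7) = 17 \cdot 7 - 15 \cdot 7 = 14$. Parts (ii)--(iv) drop out of (i) by arithmetic: if $G \in P_t(H)$ then $v(G) - v(H) = t-1$ and $e(G) - e(H) = t$, so $p(H) = p(G) + 17 - 2t \geq 20 - 2t$, yielding the bounds $14$, $12$, $10$ for $t = 3, 4, 5$ respectively.

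The real work is (vi), which I plan to prove by induction on the deficit $d(H) := (v(G) - v(H)) + (e(G) - e(H))$. Let $H \subsetneq G$ with $H \not\cong C_7$ and $G \not\in P_t(H)$ for $t \in \{3, 4, 5\}$, and suppose for contradiction that $p(H) \leq 14$. By the $C_7$-criticality of $G$ there is a homomorphism $\phi : H \to C_7$. If $\phi$ is injective, then $H$ embeds into $C_7$ as a proper subgraph, hence is a nonempty disjoint union of paths, so $e(H) \leq v(H) - 1$ and $p(H) \geq 17 v(H) - 15(v(H)-1) \geq 17$, a contradiction. Otherwise $\phi$ is non-injective and $v(G_H[\phi]) < v(G)$. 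Extract a $C_7$-critical subgraph $W \subseteq G_H[\phi]$ meeting $\phi(H)$ (as in the paragraph following Definition \ref{gsubphi}); by the minimality of $G$, either $W \in \{C_3, C_5\}$ or $W$ satisfies Theorem \ref{main} (so $p(W) \leq 2$), so $p(W) \leq 10$. Since neither $C_3$ nor $C_5$ embeds in $C_7$ and $C_7$ is not itself $C_7$-critical, $V(W) \not\subseteq V(\phi(H))$, so the extension $F'$ strictly contains $H$. The source $X = W[\phi(H)]$ is a subgraph of $C_7$, so $p(X) \geq 14$ (with equality iff $X = C_7$; otherwise $X$ is a forest and $p(X) \geq 17$). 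Lemma \ref{potential-extension} yields $p(F') = p(H) + p(W) - p(X) \leq 10$.

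I then split on whether $F' = G$. If $F' \subsetneq G$, applying the appropriate clause of Lemma \ref{potentials7} to $F'$ (using the inductive hypothesis for (vi) since $d(F') < d(H)$) gives $p(F') \geq 12$ in every case except $G \in P_5(F')$; in that remaining case $p(F') = 10$ forces equality throughout, hence $p(W) = 10$ and $p(X) = 14$, so $W = C_5$ and $X = C_7$, which is impossible since $C_7 \not\subseteq C_5$. If $F' = G$, then $p(W) \leq 2$ immediately yields $p(G) \leq 2$, contradicting (i). For $W \in \{C_3, C_5\}$, let $k = |V(X)|$; the bound $p(X) \leq p(H) + p(W) - p(G) \leq 11 + p(W)$ eliminates every $X$ except those where $X$ is a path on $k$ consecutive vertices of $W$ (with $e(X) = k - 1$). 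In each surviving subcase, $G \setminus H$ consists of a path of length $v(W) - k + 1 \in \{3, 4, 5\}$ joining two vertices $w_1, w_2 \in V(H)$, namely the preimages under $\phi$ of the endpoints of $X$: if $w_1 = w_2$, then $w_1$ is a cut vertex of $G$, contradicting the 2-connectivity from Lemma \ref{cut}; if $w_1 \neq w_2$, then $G \in P_{v(W) - k + 1}(H)$, contradicting the hypothesis on $H$.

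The main obstacle will be the $F' = G$ subcase with $W \in \{C_3, C_5\}$: the potential argument alone cannot close it, and the proof must combine the potential bookkeeping with 2-connectivity (Lemma \ref{cut}) and the assumption $G \not\in P_t(H)$ to rule out every remaining configuration.
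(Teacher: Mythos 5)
Your derivation of (ii)--(iv) from (i) by arithmetic is correct and cleaner than the paper's inline treatment: the identities $v(G)-v(H)=t-1$, $e(G)-e(H)=t$ give $p(H)=p(G)+17-2t\geq 20-2t$ directly. Your induction on the deficit $d(H)$ is an honest substitute for the paper's maximal-counterexample framing, and the $F'=G$ versus $F'\subsetneq G$ split, together with the $P_t$-conditions and the 2-connectivity from Lemma \ref{cut} to close the $W\in\{C_3,C_5\}$ subcase, tracks the paper's argument correctly (the paper instead splits directly on whether $W$ is $C_3$, $C_5$, or neither, but the content is equivalent).

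There is, however, one genuine gap: before invoking the extension machinery you must reduce to the case where $H$ is an \emph{induced} subgraph of $G$. The construction $G_H[\phi]$ discards any edge $uv\in E(G)\setminus E(H)$ with both endpoints in $V(H)$: no analogue of $uv$ is placed in $G_H[\phi]$ (the only edges between vertices of $V(\phi(H))$ are those of $E(\phi(H))$, which come from $E(H)$). Consequently a homomorphism $\phi':G_H[\phi]\to C_7$ need not satisfy $\phi'(\phi(u))\phi'(\phi(v))\in E(C_7)$, the composed map $\phi''$ in the paragraph after Definition \ref{gsubphi} can fail to be a homomorphism of $G$ at precisely such an edge, and $G_H[\phi]$ may admit a $C_7$-homomorphism after all. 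In that case no $C_7$-critical extender $W$ exists and your argument stalls. The paper handles this at the outset: if $H$ is not induced then $G[V(H)]$ has the same vertices and at least one more edge, so $p(G[V(H)])\leq p(H)-15\leq -1$ and $G[V(H)]$ is a more extremal counterexample. You should insert the same reduction in your language: $G[V(H)]$ has strictly smaller deficit, is induced, and has negative potential, so every clause of the lemma (via the base cases or your inductive hypothesis) is violated \textemdash a contradiction that lets you assume $H$ induced before calling on the extension. With that one step added, the proof goes through.
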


\begin{proof}
Suppose not. Let $H$ be a counterexample to Lemma \ref{potentials7}, maximal with respect to $v(H)$, and subject to that, with respect to $e(H)$. Since $G$ is a minimum counterexample to Theorem \ref{main} and potential is integral, if $H = G$, then (i) holds\textemdash a contradiction.  If $H$ is isomorphic to $C_7$, then (iii) holds, a contradiction.  We may therefore assume $H \not \in \{C_7, G\}$. 

First suppose that $H$ is not induced. Then $p(G[V(H)]) = p(H) - 15(e(G[V(H)])-e(H)) $. As $H$ is not induced, $e(G[V(H)]) - e(H) \geq 1$ and so it follows that $p(G[V(H)]) \leq p(H) - 15 \leq -1$. But then $G[V(H)]$ is a counterexample to Lemma \ref{potentials7}, contradicting our choice of $H$.

We may therefore assume $H$ is induced. Note every proper subgraph $H$ of $C_7$ has potential at least $17$, since $p(H) = 2t + 17$ if $H$ is a path with $t$ edges. Thus if $H$ is a proper subgraph of $C_7$, (iv) holds, a contradiction.  Since $G$ is $C_7$-critical and $H \subsetneq G$, $H$ has a homomorphism $\phi$ to a subgraph of $C_7$. Let $H'$ be an extension of $H$ with extender $W$ and source $X$ (see Definition \ref{potdefs}). By Lemma \ref{potential-extension}, $p(H') = p(H)+p(W)-p(X)$.

Suppose first that $W$ is a triangle. Since $W \not \subset G$, $W$ contains at least one vertex in $\phi(H)$ (see Definition \ref{phi(F)}). Similarly, since $W \not \subset \phi(H)$, $W$ contains at least one vertex in $V(G) \setminus V(H)$. This gives rise to at least two edges in $E(H')$ from vertices in $V(G)\setminus V(H)$ to $V(H)$. Thus $G$ has a path $P_k$, $k \in \{2, 3\}$, with endpoints in $H$ and internal vertices in $G \setminus V(H)$ (see Figure \ref{fig:5tri}).  Since $e(H \cup P_k)-e(H) = k$ and $v(H\cup P_k) -v(H) = k-1$, we have that $p(H \cup P_k) = p(H) + 17(k-1) - 15k$, and so $p(H\cup P_k) = p(H) + 2k - 17 \leq 2k-3$ (since $p(H) \leq 14$). Since $H$ is maximal, $H \cup P_k$ is not a counterexample to Lemma \ref{potentials7}. Thus $p(H \cup P_k) \geq 3$ and so $k = 3$. As $p(H\cup P_3) \leq 3$, we have $G = H \cup P_3$. But then $3 \leq p(H \cup P_3) = p(H) - 11$, and so $p(H) \geq 14$. But now $G = (P_3 \cup H) \in P_3(H)$ and so (ii) holds \textemdash a contradiction.

\begin{figure}
\begin{center}

    \begin{tikzpicture}[scale=0.8]	
			\tikzset{black node/.style={shape=circle,draw=black,fill=black,inner sep=0pt, minimum size=5pt}}
            \tikzset{white node/.style={shape=circle,draw=black,fill=white,inner sep=0pt, minimum size=11pt}}
			\tikzset{invisible node/.style={shape=circle,draw=black,fill=black,inner sep=0pt, minimum size=1pt}}
				\tikzset{edge/.style={black, line width=.4mm}}		

                \draw[fill=gray!30] (-3,0) ellipse (2cm and 1cm);
                \node[black node] (3) at (-3.5,0){};     
                \node[black node] (4) at (-3,1.5){};
                \node[black node] (5) at (-2.5,0){};
                
                \draw[fill=gray!30] (2.75,0) ellipse (2cm and 1cm);
                \node[black node] (8) at (2,0){};     
                \node[black node] (9) at (2.5,1.5){};
                \node[black node] (10) at (3,1.5){};
                \node[black node] (11) at (3.5,0){};

                \draw[edge] (3)--(4);
                \draw[edge] (4)--(5);
                \draw[edge] (8)--(9);
                \draw[edge] (9)--(10);
                \draw[edge] (10)--(11);
                
                \node[] at (-3, -0.5) {$H$};
                \node[] at (2.75,-0.4) {$H$};

			\end{tikzpicture}  
        \end{center}
	\caption{$G$ contains a path $P$ of length either 2 or 3, such that the endpoints of $P$ are in $H$ and the internal vertices of $P$ are in $G \setminus V(H)$.}
	\label{fig:5tri}
    
\end{figure}
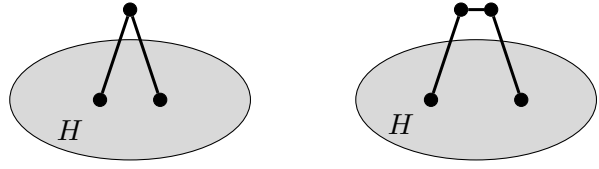

Suppose next that $W$ is a 5-cycle. Since $W \not \subset G$, $W$ contains at least one vertex in $\phi(H)$. Note there are at most two components in $W \setminus V(H)$, since each such component gives rise to at least two edges in $W$ and $e(W) = 5$.  Thus each component in $W \setminus V(H)$ is a path, and so at least one of the following cases hold (see Figure \ref{fig:5cyc}):

\renewcommand\labelenumi{(\theenumi)}
\begin{enumerate}
\item $G$ contains a path $P_k$, $k \in \{2, 3, 4, 5\}$, joining two distinct vertices of $H$ such that the internal vertices of $P_k$ are not in $H$, or 
\item $G$ contains a path $P_2$ joining two distinct vertices of $H$ such that the internal vertex of $P_2$ is not in $H$, and a second path $Q_k$, $k \in \{2,3\}$, joining distinct vertices of $H$ such that the internal vertices of $Q_k$ are not in $H$.
\end{enumerate} 

\begin{figure}
\begin{center}
    \begin{tikzpicture}[scale=0.7]	
			\tikzset{black node/.style={shape=circle,draw=black,fill=black,inner sep=0pt, minimum size=5pt}}
            \tikzset{white node/.style={shape=circle,draw=black,fill=white,inner sep=0pt, minimum size=11pt}}
			\tikzset{invisible node/.style={shape=circle,draw=black,fill=black,inner sep=0pt, minimum size=1pt}}
				\tikzset{edge/.style={black, line width=.4mm}}		

                \draw[fill=gray!30] (-4,0) ellipse (1.5cm and 0.8cm);
                \node[black node] (3) at (-4.5,0){};     
                \node[black node] (4) at (-4,1.5){};
                \node[black node] (5) at (-3.5,0){};
                
                \draw[fill=gray!30] (-0.5,0) ellipse (1.5cm and 0.8cm);
                \node[black node] (8) at (-1.5,0){};     
                \node[black node] (9) at (-0.75,1.5){};
                \node[black node] (10) at (-0.25,1.5){};
                \node[black node] (11) at (0.5,0){};
                
                 \draw[fill=gray!30] (3,0) ellipse (1.5cm and 0.8cm);
                \node[black node] (12) at (2,0){};     
                \node[black node] (13) at (2.5,1.5){};
                \node[black node] (14) at (3,1.5){};
                \node[black node] (15) at (3.5,1.5){};
                \node[black node] (16) at (4,0){};

                \draw[fill=gray!30] (6.5,0) ellipse (1.5cm and 0.8cm);
                \node[black node] (17) at (5.5,0){};     
                \node[black node] (18) at (5.75,1.5){};
                \node[black node] (19) at (6.25,1.5){};
                \node[black node] (20) at (6.75,1.5){};
                \node[black node] (21) at (7.25,1.5){};
                \node[black node] (22) at (7.5,0){};

                \draw[edge] (3)--(4);
                \draw[edge] (4)--(5);
                
                \draw[edge] (8)--(9);
                \draw[edge] (9)--(10);
                \draw[edge] (10)--(11);
                
                \draw[edge] (12)--(13);
                \draw[edge] (13)--(14);
                \draw[edge] (14)--(15);
                \draw[edge] (15)--(16);
                
                \draw[edge] (17)--(18);
                \draw[edge] (18)--(19);
                \draw[edge] (19)--(20);
                \draw[edge] (20)--(21);
                \draw[edge] (21)--(22);
                
                \node[] at (-4, -0.4) {$H$};
                \node[] at (-0.5,-0.4) {$H$};
                \node[] at (3,-0.4) {$H$};
                \node[] at (6.5,-0.4) {$H$};
			\end{tikzpicture}  
			\hskip 2mm
            \begin{tikzpicture}[scale=0.8]	
			\tikzset{black node/.style={shape=circle,draw=black,fill=black,inner sep=0pt, minimum size=5pt}}
            \tikzset{white node/.style={shape=circle,draw=black,fill=white,inner sep=0pt, minimum size=11pt}}
			\tikzset{invisible node/.style={shape=circle,draw=black,fill=black,inner sep=0pt, minimum size=1pt}}
				\tikzset{edge/.style={black, line width=.4mm}}

                 \draw[fill=gray!30] (2,0) ellipse (2cm and 0.8cm);
                \node[black node] (1) at (0.75,0){};     
                \node[black node] (2) at (1.25,1.5){};
                \node[black node] (3) at (1.75,0){};
                \node[black node] (4) at (2.25,0){};
                \node[black node] (5) at (2.75,1.5){};
                \node[black node] (6) at (3.25,0){};

                \draw[fill=gray!30] (6.5,0) ellipse (2cm and 0.8cm);
                \node[black node] (7) at (5,0){};     
                \node[black node] (8) at (5.5,1.5){};
                \node[black node] (9) at (6,0){};
                \node[black node] (10) at (6.5,0){};
                \node[black node] (11) at (7,1.5){};
                \node[black node] (12) at (7.5,1.5){};
                \node[black node] (13) at (8,0){};
                
                \draw[edge] (1)--(2);
                \draw[edge] (2)--(3);
                \draw[edge] (4)--(5);
                \draw[edge] (5)--(6);
                
                \draw[edge] (7)--(8);
                \draw[edge] (8)--(9);
                \draw[edge] (10)--(11);
                \draw[edge] (11)--(12);
                \draw[edge] (12)--(13);

                \node[] at (2,-0.4) {$H$};
                \node[] at (6.5,-0.4) {$H$};
			\end{tikzpicture}  
        \end{center}
	\caption{$G$ contains a path $P_k$, $k \in \{2, 3, 4, 5\}$, joining two distinct vertices of $H$ such that the internal vertices of $P_k$ are not in $H$, or $G$ contains a path $P_2$ joining two distinct vertices of $H$ such that the internal vertex of $P_2$ is not in $H$, and a second path $Q_k$, $k \in \{2,3\}$, joining distinct vertices of $H$ such that the internal vertices of $Q_k$ are not in $H$.}
	\label{fig:5cyc}
    
\end{figure}
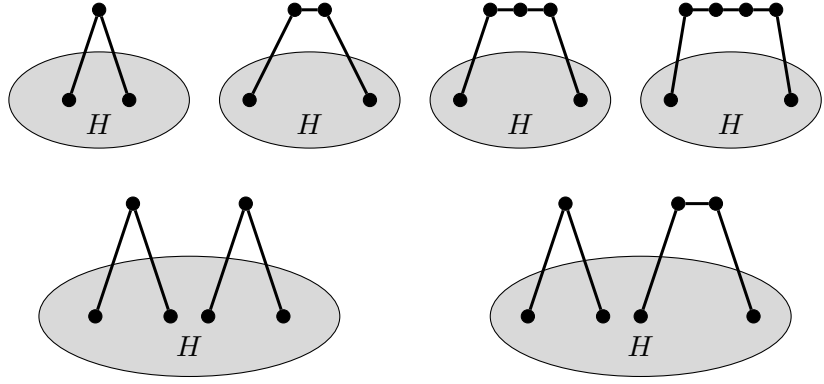

Suppose that $G$ contains a path $P_2$ as described in (1) or (2). Note that $p(H \cup P_2) = p(H) -13 \leq 1$. But since $H$ is maximal this contradicts our choice of $H$.  If $G$ does not contain a path $P_2$ as described, then it contains a path $P_k$, with  $k \in \{3,4,5\}$ as described in (1). We have $p(H \cup P_k) = p(H) + 17(k-1)-15k = p(H) +2k-17$.  Since $p(H) \leq 14$, it follows that $p(H \cup P_k) \leq 2k-3$. Since $H \cup P_k$ is not a counterexample, $3 \leq p(H\cup P_k)$. As $3 \leq 2k-3$, it follows that $k = 3$ and $p(H) = 14$. But then (ii) holds \textemdash a contradiction.

We can therefore assume $W \not \in \{C_3, C_5\}$. Recall $H'$ is an extension of $H$ with extender $W$ and source $X$. Note since $H \not \subseteq C_7$, we have that $v(\phi(H)) < v(H)$, and consequently that $v(W) < v(G)$. Since $G$ is a minimum counterexample to Theorem \ref{main}, and $W$ is neither a 3-cycle nor a 5-cycle, it follows that $p(W) \leq 2$. Since $X$ is a subgraph of a 7-cycle, $p(X) \geq 14$. We have therefore that $p(H') = p(W) + p(H) - p(X) \leq 2 + p(H) - 14$. Since $p(H) \leq 14$ we have $p(H') \leq 2$. But then $H'$ is a counterexample to Lemma \ref{potentials7}, contradicting our choice of $H$. 
\end{proof}

%----------------------------------------- Girth
The following lemma gives a lower bound for the girth of $G$.
\begin{lemma}\label{girth}
$G$ has girth at least 7.
\end{lemma}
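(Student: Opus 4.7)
The plan is to rule out cycles of each length $k \in \{3, 4, 5, 6\}$ in $G$, splitting by the parity of $k$. For odd $k \in \{3, 5\}$, I would use criticality directly: if $C_k \subseteq G$, then since $G \notin \{C_3, C_5\}$ the subgraph $C_k$ is proper in $G$, so by $C_7$-criticality it admits a homomorphism to $C_7$. This is impossible since no odd cycle of length less than $7$ maps to $C_7$.

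For $k = 4$, I would apply Lemma \ref{potentials7} to $H = C_4 \subseteq G$, which has $p(H) = 8$. Note $H \neq G$ (since $C_4$ is bipartite and thus $C_7$-colourable, hence not $C_7$-critical) and $H \neq C_7$. Hence one of cases (ii)--(iv) or the default case (vi) of Lemma \ref{potentials7} must apply; each of these demands $p(H) \geq 10$, contradicting $p(H) = 8$.

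For $k = 6$, take $H = C_6 \subseteq G$; we may assume $H$ is induced, since any chord of a $6$-cycle creates a cycle of length at most $5$, already forbidden by the previous steps. Now $p(H) = 12$. Arguing similarly via Lemma \ref{potentials7}, cases (i), (iv), (v), and (vi) are all impossible, and only $G \in P_4(H)$ or $G \in P_5(H)$ remain. In each such configuration, the endpoints of the added path lie at distance $d \in \{1, 2, 3\}$ on $C_6$, producing six small explicit graphs. A case-by-case analysis shows that each such graph either contains a $C_5$ (for $P_4$, $d = 1$), is bipartite (for $P_4$, $d = 2$ and $P_5$, $d \in \{1, 3\}$), or admits an explicit homomorphism to $C_7$ (for $P_4$, $d = 3$ and $P_5$, $d = 2$), contradicting respectively the odd-cycle ban or the $C_7$-criticality of $G$.

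The main obstacle is the last two homomorphism-construction subcases, where $p(C_6) = 12$ is too large to be eliminated directly by Lemma \ref{potentials7}. In each, $G$ is a theta graph whose three internally disjoint paths joining the distinguished vertices have lengths summing in some pair to $7$; the resulting $7$-cycle in $G$ can be mapped identically onto $C_7$, and the map then extends along the third path using the fact that $C_7$ admits walks of appropriate length between the two relevant images.
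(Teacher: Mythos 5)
Your proposal is correct and follows essentially the same skeleton as the paper's proof: dispose of $C_3$ and $C_5$ by criticality, then apply Lemma~\ref{potentials7} to a cycle of length $4$ or $6$. The one place you diverge is the final step. After the potential bound forces $G \in P_4(C_6) \cup P_5(C_6)$, the paper simply notes that $G$ would be a theta graph and cites (as it does repeatedly throughout the paper) the folklore fact that no theta graph is $C_{2t+1}$-critical. You instead enumerate the six possible attachments of the added path and dispatch each one by hand---observing bipartiteness, finding a $C_5$, or writing down an explicit homomorphism to $C_7$ by mapping one $7$-cycle identically and routing the third path as a walk in $C_7$. Your case checks are all correct ($\Theta_{1,4,5}$ has a $C_5$; $\Theta_{2,4,4}$, $\Theta_{1,5,5}$, $\Theta_{3,3,5}$ are bipartite; $\Theta_{3,3,4}$ and $\Theta_{2,4,5}$ map to $C_7$), so your version is more self-contained, at the cost of length: it re-derives in a special case a fact the paper is content to treat as known.
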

\begin{proof}
Suppose not. Note $G$ does not contain a 5-cycle or a triangle, as these are $C_7$-critical themselves and $G \not \in \{C_3, C_5\}$. It follows that $G$ contains a cycle $C$ of length $2t$, where $t \in \{2,3\}$. But then $p(C) = 17(2t)-15(2t) = 2t \leq 12$, so $G \in \{P_4(C), P_5(C)\}$ by lemma \ref{potentials7}. But then $G$ is a theta graph, and no such graph is $C_{2t+1}$-critical \textemdash a contradiction. 
\end{proof}

%----------------------------------------- (4,*,*) verts in cells
The following three lemmas are used to show that certain types of vertices of degree three and four are contained either in cells or in cycles of length nine. 

\begin{lemma}\label{4str}
If $v$ is a vertex of degree three incident with strings $S_1$, $S_2$, and $S_3$ such that $S_3$ is a 4-string, then $S_1 \cup S_2$ is contained in a cell.
\end{lemma}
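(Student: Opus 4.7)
The plan is to argue by contradiction, assuming no cell of $G$ contains $S_1 \cup S_2$. Let $u_i$ denote the endpoint of $S_i$ other than $v$, and let $\ell_i = k_i+1$ be the length of $S_i$, so $\ell_3 = 5$. By Lemma \ref{weight}, $k_1+k_2 \leq 4$.

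The key computational observation is that in $C_7$, the set of vertices reachable from any $x \in V(C_7)$ by a walk of length $5$ is precisely $V(C_7) \setminus \{x\}$ (this follows by tracking net displacements $\pm 5, \pm 3, \pm 1$ modulo $7$). Combined with Lemma \ref{vartheta}, this shows that $B_\phi(u_3 \mid v, S_3) = V(C_7) \setminus \{\phi(v)\}$ for any homomorphism $\phi$ of $G \setminus \textrm{int}(S_3)$ to $C_7$. Such a $\phi$ exists by $C_7$-criticality of $G$, and cannot extend to a full homomorphism of $G$; hence $\phi(u_3) = \phi(v)$ for every such $\phi$.

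Next I would construct a smaller graph with no $C_7$-homomorphism. Let $\hat G$ be obtained from $G$ by deleting $\textrm{int}(S_3)$ and adding a new edge $vu_3$. Any homomorphism $\psi$ of $\hat G$ would satisfy $\psi(v) \neq \psi(u_3)$, contradicting the key observation when $\psi$ is restricted to $G \setminus \textrm{int}(S_3)$. Thus $\hat G$ has no homomorphism to $C_7$, and since $v(\hat G) = v(G) - 4 < v(G)$, by the minimality of $G$, $\hat G$ contains a $C_7$-critical subgraph $W$. Moreover $W$ must contain the edge $vu_3$; otherwise $W \subseteq G \setminus \textrm{int}(S_3) \subseteq G$, contradicting the $C_7$-criticality of $G$. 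By Lemma \ref{girth} (which rules out 3-cycles in $G$, but not in $\hat G$) and the minimality of $G$ combined with Theorem \ref{main}, either $W \in \{C_3, C_5\}$ or $p(W) \leq 2$.

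By Lemma \ref{potential-extension}, lifting $W$ to $G$ (replacing the edge $vu_3$ by the path $S_3$) yields a subgraph $F' \subseteq G$ with $p(F') = p(W) + 8$ (since $p(S_3) - p(vu_3) = 27 - 19 = 8$). In case (a), $W = C_3$, $F'$ is a $7$-cycle in $G$ through $S_3$ and a single other vertex $x$ adjacent both to $v$ and to $u_3$. In case (b), $W = C_5$, $F'$ is a $9$-cycle in $G$ through $S_3$ and a path of length $4$ from $v$ to $u_3$. In case (c), $p(W) \leq 2$, so $p(F') \leq 10$; by Lemma \ref{potentials7}, this forces $F' = G$ or $G \in P_5(F')$. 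In each case, the ``second edge'' at $v$ in the lifted structure belongs to $S_1$ or $S_2$, not $S_3$, so $F'$ records a short alternate path from $v$ to $u_3$ that enters $S_1 \cup S_2$. A symmetric application of the same construction (replacing the role of $S_3$, or combining with the parity restrictions of Lemma \ref{par-str} applied to the two $v$--$u_3$ paths) produces a second such alternate path through the other of $S_1, S_2$; together these assemble into a $7$-cycle containing $S_1 \cup S_2$, contradicting our assumption.

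The principal obstacle is finishing case (a): the lifted $7$-cycle uses only one of $S_1, S_2$ at $v$, and showing that this forces a \emph{second} $7$-cycle using the other string requires careful bookkeeping of how $u_3$ interacts with $u_1, u_2$ in $F'$, together with the fact that $v$ cannot share two strings of the same parity with $u_3$ (Lemma \ref{par-str}) and that cells cannot contain both endpoints of an external string (Lemma \ref{stringsincells}).
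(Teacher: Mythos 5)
Your opening observation is correct and cleanly proved: since $S_3$ has $5$ edges, Lemma~\ref{vartheta} gives $|B_\phi(u_3 \mid v, S_3)| \geq 6$, and the parity argument pins it down to $V(C_7) \setminus \{\phi(v)\}$, so any homomorphism of $G \setminus \textrm{int}(S_3)$ must satisfy $\phi(u_3) = \phi(v)$. But the reduction you build from this is aimed in the wrong direction, and it leaves a real gap.

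The paper identifies the two neighbours $a_1, a_2$ of $v$ that lie \emph{off} $S_3$; this encodes the constraint ``$\phi(a_1) = \phi(a_2)$ always extends,'' and crucially, the negated hypothesis (``$a_1va_2$ not in a cell'') together with Lemma~\ref{girth} is exactly what guarantees the identified graph has no triangle or $5$-cycle, so the critical subgraph $G''$ has $p(G'') \leq 2$ and the potential bound closes. Your reduction instead deletes $\textrm{int}(S_3)$ and adds the edge $vu_3$, encoding ``$\phi(u_3) \neq \phi(v)$.'' That is a valid construction, but nothing in the hypothesis forbids $v$ and $u_3$ from having a common neighbour in $G$, so $\hat G$ genuinely can contain a triangle, and case (a) genuinely arises. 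In that case you obtain a $7$-cycle through $S_3$ and one extra vertex $x$ — a cell through $S_3$, not through $S_1 \cup S_2$. This does not contradict your standing assumption, and in fact Lemma~\ref{7cycles} cuts the other way: since distinct $7$-cycles are vertex-disjoint, the cell through $S_3$ is the \emph{only} cell through $v$, and it cannot also contain all of $S_1 \cup S_2$ (it has only seven vertices, six of which lie on $S_3$). So case (a) stalls rather than closes. The proposed repair — ``a symmetric application of the same construction'' — is not available: the construction depends on $S_3$ being a $4$-string, and $S_1$, $S_2$ need not be. Lemma~\ref{par-str} constrains the parities of parallel $v$--$u_3$ strings but does not manufacture a second $7$-cycle. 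Case (b) has the analogous problem (you get a $9$-cycle through $S_3$, which is neither a cell nor obviously contradictory), and in case (c) you correctly reach $F' = G$ or $G \in P_5(F')$ but do not derive a contradiction from either alternative.

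In short: the key structural insight of the paper's proof is to choose an identification whose short cycles in the quotient would be precisely the cells you are assuming do not exist. Your edge-addition produces short cycles corresponding to a different structure (a cell through $S_3$), which the hypothesis does not rule out, and the case analysis cannot be completed as sketched.
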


\begin{proof}
Let $\{a_1, a_2\} = N(v) \setminus V(S_3)$. It suffices to show that the path $a_1va_2$ is contained in a cell, since the internal vertices of $S_1$ and $S_2$ (if they exist) have degree 2. Suppose this is not the case. Let $G'$ be the graph obtained from $G$ by identifying $a_1$ and $a_2$ to a new vertex $z$. Note since $a_1va_2$ is not contained in a cell and $G$ has girth at least 7 by Lemma \ref{girth}, $G'$ contains no triangle nor 5-cycle. Let $x \neq v$ be an endpoint of $S_3$ in $G$, and let $S = S_3 - x$.  If $G'$ admits a homomorphism $\phi$ to $C_7$, then $\phi$ extends to $G$ by setting $\phi(a_1) = \phi(a_2) = \phi(z)$. Thus there does not exist a homomorphism of $G'$ to $C_7$, and so $G'$ contains a $C_7$-critical subgraph $G''$. Since $G'' \not \in \{C_3, C_5\}$, since $v(G'') < v(G)$, and since $G$ is a minimum counterexample, we have that $p(G'') \leq 2$. Since $G'' \not \subseteq G$, it follows that $z \in V(G'')$. Furthermore, $S$ is not contained in $G''$, since by Lemma \ref{max-str} $G''$ does not contain the 5-string $S_3z$ and the minimum degree of $G''$ is at least 2. Let $F$ be the graph obtained from $G''$ by splitting $z$ back into vertices $a_1$ and $a_2$, and adding the path $a_1va_2$. The potential of $F$ is given by $p(F) = p(G'') + 17(2) - 15(2) \leq 6$. By Lemma \ref{potentials7}, $F = G$. But this is a contradiction, since $S-v$ is not contained in $F$.
\end{proof}

%----------------------------------------- (3, *, *) in C_7/C_9s

\begin{lemma}\label{3str}
If $v$ is a vertex of degree three incident with strings $S_1$, $S_2$, and $S_3$ such that $S_3$ is a 3-string and both $S_1$ and $S_2$ contain at least two edges, then $S_1 \cup S_2$ is contained in a cell or a 9-cycle.
\end{lemma}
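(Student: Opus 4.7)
The plan is to follow the identification strategy of Lemma~\ref{4str}, but to handle the new obstacle (that the resulting extra string is only a $4$-string rather than a forbidden $5$-string) by invoking Lemma~\ref{4str} itself recursively on a smaller critical graph. Let $a_1,a_2$ be the neighbours of $v$ lying on $S_1,S_2$; since each of $S_1,S_2$ has length at least $2$, these are degree-$2$ internal vertices. Suppose for contradiction that $S_1\cup S_2$ lies in no cell and no $9$-cycle, and form $G'$ by identifying $a_1$ with $a_2$ into a new vertex $z$. Using Lemma~\ref{girth} together with our non-cell assumption, one checks that $G'$ has no triangle and no $5$-cycle: any such short cycle not through $z$ would already be in $G$; any triangle through $z$ would lift to a $5$-cycle in $G$ through $a_1va_2$; and any $5$-cycle through $z$ must (since $v$'s only other neighbour in $G'$ is $y_1$, forcing the would-be $5$-cycle $z,v,y_1,y_2,y_3,z$ to be blocked by vertex-disjointness of the strings incident with $v$) use the edges $zc_1$ and $zc_2$, whose lift combined with $a_1va_2$ would produce a $7$-cycle in $G$ containing $S_1\cup S_2$. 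Any homomorphism $G'\to C_7$ would extend to $G$ via $\phi(a_1)=\phi(a_2)=\phi(z)$, so $G'$ admits no such homomorphism and contains a $C_7$-critical subgraph $G''\notin\{C_3,C_5\}$; minimality of $G$ then gives $p(G'')\leq 2$, and $C_7$-criticality of $G$ forces $z\in V(G'')$.

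Next, lift $G''$ back to a subgraph $F\subseteq G$ by splitting $z$ into $a_1,a_2$ according to their original $G$-neighbourhoods and, if $v\notin V(G'')$, additionally adding $v$ together with the two edges $a_1v,a_2v$. If $v\notin V(G'')$, then $y_1\notin V(G'')$ by min-degree-$2$ propagation along $S_3$, and a direct count gives $v(F)=v(G'')+2$, $e(F)=e(G'')+2$, so $p(F)=p(G'')+4\leq 6$. Since $y_1\in V(G)\setminus V(F)$, $F\neq G$, and Lemma~\ref{potentials7} then forces $p(F)\geq 10$, a contradiction.

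Hence $v\in V(G'')$, so $zv\in E(G'')$ and min-degree-$2$ propagation along $S_3$ places the entire path $zvy_1y_2y_3x_3$ in $G''$; this is a $4$-string of $G'$ emanating from $z$, and crucially it is not forbidden by Lemma~\ref{max-str}. The lift now has $v(F)=v(G'')+1$ and $e(F)=e(G'')+1$ (as $zv$ produces both $a_1v$ and $a_2v$), so $p(F)=p(G'')+2\leq 4$. If $F\neq G$, Lemma~\ref{potentials7} again yields a contradiction; so $F=G$, forcing $V(G'')=V(G')$ and $E(G'')=E(G')$, and hence $G''=G'$. Thus $G'$ is itself a $C_7$-critical graph with $v(G')<v(G)$.

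The main obstacle is to close out this last subcase, which we do by applying Lemma~\ref{4str} to the smaller critical graph $G'$. In $G'$, the vertex $z$ has degree $3$ and is incident with three strings: an $(e(S_1){-}2)$-string ending at $x_1$, an $(e(S_2){-}2)$-string ending at $x_2$, and the $4$-string $zvy_1y_2y_3x_3$ ending at $x_3$. The proof of Lemma~\ref{4str} only relies on the inductive hypothesis that every strictly smaller $C_7$-critical graph (not equal to $C_3$ or $C_5$) satisfies $p\leq 2$, and since $v(G')<v(G)$ this hypothesis is inherited from the minimality of $G$; hence Lemma~\ref{4str} applies to $z$ in $G'$ and yields a cell of $G'$ containing the two non-$4$-strings, i.e., a $7$-cycle through $z$ using the edges $zc_1$ and $zc_2$. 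Splitting $z$ back into $a_1,a_2$ turns this $7$-cycle into a $7$-path from $a_1$ to $a_2$ in $G$ avoiding $v$, which together with the path $a_1va_2$ forms a $9$-cycle of $G$ containing $S_1\cup S_2$, contradicting our assumption and completing the proof.
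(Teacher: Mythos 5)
The paper's proof avoids your situation entirely by performing a \emph{double} identification: it identifies $a_1$ with $b_1$ \emph{and} $a_2$ with $b_2$ (where $a_2,b_2$ are the next vertices along $S_1,S_2$), so that the $3$-string $S_3$ becomes a genuine $5$-string in the identified graph, which Lemma~\ref{max-str} forbids outright. Your single identification only produces a $4$-string, and you try to close the resulting gap by applying Lemma~\ref{4str} recursively to the smaller critical graph $G'=G''$. This is the step that does not go through as written.

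The problem is your claim that ``the proof of Lemma~\ref{4str} only relies on the inductive hypothesis that every strictly smaller $C_7$-critical graph satisfies $p\le 2$.'' That is not what the proof uses. The proof of Lemma~\ref{4str} invokes Lemma~\ref{girth} and, crucially, Lemma~\ref{potentials7}, both of which are statements specifically about the minimum counterexample $G$, and Lemma~\ref{potentials7} is \emph{false} for your $G'$. Concretely: Lemma~\ref{potentials7}(iv) asserts $p(H)\ge 14$ whenever the ambient critical graph lies in $P_3(H)$, and its proof hinges on the ambient graph having potential at least $3$. Your $G'$ is a $C_7$-critical graph with $p(G')\le 2$ (indeed that is how you obtained it), so if $G'\in P_3(H)$ for some $H$ then $p(H)=p(G')+11\le 13<14$, violating the analogue of~(iv). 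The step ``$p(F)\le 6$, hence $F=G'$'' in a re-run of Lemma~\ref{4str} therefore cannot be justified by citing Lemma~\ref{potentials7}, since that lemma was only established for subgraphs of $G$.

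The underlying idea is not unsalvageable: one can likely finish your last subcase by lifting the offending subgraph $F\subseteq G'$ all the way back through both identifications to a subgraph $F^{**}\subseteq G$ (adding back $a_1,a_2,v$), at which point Lemma~\ref{potentials7} applied to $G$ gives $p(F^{**})\ge 10$ and one can extract a contradiction from the missing internal vertices of $S_3$ as in the paper. But that additional bookkeeping is precisely what the paper's double-identification buys for free, and as currently written your recursive invocation of Lemma~\ref{4str} is a genuine gap rather than an omitted routine detail.
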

\begin{proof}

Let $\{a_1, b_1\} = N(v) \setminus V(S_3)$. It suffices to show the path $a_1vb_1$ is contained in a cell or 9-cycle, since the internal vertices of $S_1$ and $S_2$ have degree 2. Suppose this is not the case. Let $a_2 = N(a_1) - v$, and let $b_2 = N(b_1)-v$. Let $G'$ be the graph obtained from $G$ by identifying $a_1$ and $b_1$ to a new vertex $z_1$, and identifying $a_2$ and $b_2$ to a new vertex $z_2$. Note since $a_1vb_1$ is not contained in a cell or 9-cycle, $G'$ contains no 3- nor 5-cycle. Let $x \neq v$ be an endpoint of $S_3$ in $G$, and let $S = (S_3\cup z_1) \setminus \{x\}$. If $G'$ admits a homomorphism $\phi$ to $C_7$, then $\phi$ extends to $G$ by setting $\phi(a_1) = \phi(b_1) = \phi(z_1)$ and $\phi(a_2) = \phi(b_2) = \phi(z_2)$. Thus there does not exist a homomorphism of $G'$ to $C_7$, and so $G'$ contains a $C_7$-critical subgraph $G''$. Since $G'' \not \in \{C_3, C_5\}$, since $v(G'') < v(G)$, and since $G$ is a minimum counterexample, it follows that $p(G'') \leq 2$. Furthermore, $S$ is not contained in $G''$ since by Lemma \ref{max-str} $G''$ does not contain a 5-string $S_3z_1z_2$ and the minimum degree of $G''$ is at least 2. Since $G'' \not \subseteq G$ and $z_1 \not \in V(G'')$, we have that $z_2 \in V(G'')$. 

Let $F$ be the graph obtained from $G$ by splitting $z_1$ and $z_2$ back into $a_1, b_1$ and $a_2, b_2$, respectively, and adding the path $b_2b_1va_1a_2$. The potential of $F$ is given by $p(F) = p(G'') + 17(4)-15(4) \leq 10$. By Lemma \ref{potentials7}, either $F = G$ or $G \in P_5(F)$. Since $S_3 \setminus \{x, v\}$ is not contained in $F$, we have that $F \neq G$ and so $G = F \cup P$ for a path $P$ of length 5. But again since $S_3 \setminus \{x,v\}$ is not contained in $F$, $P$ contains $S_3$. But by definition, $S_3$ is a path of length 4 with endpoints of degree three \textemdash a contradiction.
\end{proof}

%----------------------------------------- (4,*,*,*) verts in cells
\begin{lemma}\label{deg4s}
If $v$ is a vertex of degree 4 incident with strings $S_1$, $S_2$, $S_3$, and $S_4$ such that $S_4$ is a 4-string, then there exists $\{i, j\} \subset \{1,2,3\}$ with $i \neq j$ such that $S_i \cup S_j$ is contained in a cell. 

\end{lemma}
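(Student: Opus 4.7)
The plan is to mimic the proof of Lemma \ref{4str}, but identifying all three of $a_1, a_2, a_3$ (the neighbors of $v$ along $S_1, S_2, S_3$) simultaneously. Write $w_1$ for the neighbor of $v$ on $S_4$, so $N_G(v) = \{a_1, a_2, a_3, w_1\}$. As in Lemma \ref{4str}, since the internal vertices of each $S_i$ have degree $2$, it suffices to prove that some $2$-path $a_i v a_j$ (with $i \neq j$ and $\{i,j\} \subset \{1,2,3\}$) is contained in a cell, so I would argue by contradiction: assume none of the three such $2$-paths lies in a cell.

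Form $G'$ by identifying $a_1, a_2, a_3$ to a single new vertex $z$. The central claim is that $G'$ has neither a triangle nor a $5$-cycle. A short cycle of $G'$ avoiding $z$ is a short cycle of $G$, contradicting Lemma \ref{girth}. A short cycle through $z$ that avoids $v$ corresponds to a short $a_i$--$a_j$ walk in $G$ through vertices other than $v$: if $i = j$ this already gives a short cycle of $G$, and if $i \neq j$, concatenation with the $2$-path $a_i v a_j$ gives either a short cycle of $G$ or a cell containing $a_i v a_j$, each a contradiction. The remaining case is that the short cycle passes through both $z$ and $v$; since the only $G'$-neighbors of $v$ are $z$ and $w_1$, one quickly sees the cycle is forced to extend through $v, w_1, w_2, \ldots$ along $S_4$, requiring some $w_k$ with $k \in \{1, 2, 3\}$ to be adjacent to an $a_i$ in $G$, contradicting $\deg_G(w_k) = 2$. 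Carrying out these sub-cases cleanly is the main technical obstacle.

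Any homomorphism $G' \to C_7$ would lift to $G$ by setting $\phi(a_i) := \phi(z)$ for each $i$, contradicting $C_7$-criticality; hence $G'$ contains a $C_7$-critical subgraph $G''$. The absence of triangles and $5$-cycles in $G'$ rules out $G'' \in \{C_3, C_5\}$, and since $v(G'') \leq v(G') < v(G)$, minimality of $G$ gives $p(G'') \leq 2$. Moreover $z \in V(G'')$, or else $G''$ would be a proper $C_7$-critical subgraph of $G$. Next, the chain $v, w_1, w_2, w_3, w_4$ must lie entirely outside $V(G'')$: in $G'$ these are degree-$2$ vertices forming a path between $z$ and the far endpoint $x$ of $S_4$, so the minimum-degree-$2$ condition on $G''$ would propagate the whole chain into $V(G'')$ the moment one of them belonged, producing a $k$-string with $k \geq 5$ in $G''$ and contradicting Lemma \ref{max-str} (the degenerate case of $G''$ being an all-degree-$2$ cycle is ruled out since the only $C_7$-critical cycles are $C_3$ and $C_5$).

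To conclude, the plan is to lift $G''$ back to a subgraph $F$ of $G$ by splitting $z$. Girth at least $7$ prevents any two distinct $a_i$'s from sharing a neighbor other than $v$, so each edge $zy \in E(G'')$ corresponds unambiguously to a single edge $a_i y \in E(G)$; take $V(F) = (V(G'') \setminus \{z\}) \cup \{a_1, a_2, a_3, v\}$, with $E(F)$ consisting of $E(G'' \setminus z)$, the lifted edges (one for each $zy \in E(G'')$), and the three new edges $v a_1, v a_2, v a_3$. A direct count gives $v(F) = v(G'') + 3$ and $e(F) = e(G'') + 3$, hence $p(F) = p(G'') + 6 \leq 8$. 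Since $w_1 \notin V(F)$, $F$ is a proper subgraph of $G$; but $p(F) \leq 8$ violates every case of Lemma \ref{potentials7} (which requires $p(F) \geq 10$ even in the most favorable $G \in P_5(F)$ case, and $F \neq C_7$ since $p(C_7) = 14$). This contradiction completes the proof.
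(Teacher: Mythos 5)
Your proof is correct and follows essentially the same approach as the paper: identify the three short-string neighbors of $v$ to a single vertex $z$, extract a $C_7$-critical subgraph $G''$ of the resulting graph, observe $G''$ excludes the $S_4$-path by the $5$-string bound, split $z$ back and add the three edges to $v$ to get a proper subgraph $F$ of $G$ with $p(F)\leq 8$, contradicting Lemma \ref{potentials7}. You flesh out two points the paper leaves terse (the absence of triangles/$5$-cycles in $G'$ via a case analysis, and the unambiguity of the lift due to girth), but the argument is the same.
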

\begin{proof}
Let $\{u_1, u_2, u_3\} = N(v) \setminus V(S_4)$.  It suffices to show one of the paths $u_1vu_2$, $u_1vu_3$ or $u_2vu_3$ is contained in a cell, since the internal vertices of $S_1$, $S_2$ and $S_3$ (if they exist) have degree 2. Suppose this is not the case. Let $G'$ be the graph obtained from $G$ by identifying $u_1, u_2$ and $u_3$ to a new vertex $z$. Note $G'$ contains no 3- nor 5-cycle. Let $x \neq v$ be an endpoint of $S_4$ in $G$, and let $S = S_4-x$. If $G'$ admits a homomorphism $\phi$ to $C_7$, then $\phi$ extends to $G$ by setting $\phi(u_1) = \phi(u_2) = \phi(u_3) = \phi(z)$. Thus there does not exist a homomorphism of $G'$ to $C_7$, and so $G'$ contains a $C_7$-critical subgraph $G''$. Since $G'' \not \in \{C_3, C_5\}$, since $v(G'') < v(G)$, and since $G$ is a minimum counterexample, it follows that $p(G'') \leq 2$. Since $G'' \not \subseteq G$, we have $z \in V(G'')$. Furthermore, $S$ is not contained in $G''$, since by Lemma \ref{max-str} $G''$ does not contain a 5-string and the minimum degree of $G''$ is at least 2. Let $F$ be the graph obtained from $G''$ by splitting $z$ back into vertices $u_1, u_2$ and $u_3$, and adding the path $u_1vu_2$ and the edge $vu_3$. The potential of $F$ is given by $p(F) = p(G'') + 17(3) - 15(3) \leq 8$. By Lemma \ref{potentials7}, $F = G$. But this is a contradiction, since $S-v$ is not contained in $F$.
\end{proof}

%----------------------------------------- C_7s disjoint
Finally, the last three lemmas in Section 4.1 characterize the intersection of distinct 7- and 9-cycles in $G$.  Together with Lemmas \ref{4str}, \ref{3str}, and \ref{deg4s}, the following lemmas will allow us to rule out the existence of certain types of vertices in Section 4.2. 

\begin{lemma}\label{7cycles} Let $C$ and $C'$ be distinct 7-cycles in $G$. Then $C$ and $C'$ are vertex-disjoint. 
\end{lemma}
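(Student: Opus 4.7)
Suppose for contradiction that $V(C) \cap V(C') \neq \emptyset$, and set $H = C \cup C'$. The plan is to combine Lemma~\ref{girth} with Lemma~\ref{potentials7} to derive a contradiction. Note that $C \cap C'$ is a disjoint union of subpaths; let $\ell$ be the total number of shared edges.

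I first reduce to the case when $C \cap C'$ is a single path. If the intersection has two or more components, then considering the cycles in $H$ formed by pairing arcs of $C \setminus (C\cap C')$ with arcs of $C' \setminus (C \cap C')$ (using shared subpaths to close them up where necessary), and summing four such girth-at-least-$7$ inequalities, one obtains $\ell \leq 0$ together with equalities forcing non-integer arc lengths, a contradiction. Hence $C \cap C'$ is a single path of length $\ell$, and $H$ is the theta graph $\theta(\ell, 7-\ell, 7-\ell)$ (two 7-cycles meeting at a vertex when $\ell = 0$). The two length-$(7-\ell)$ internal paths of $H$ form a cycle of length $2(7-\ell)$, which must be at least $7$ by Lemma~\ref{girth}; hence $\ell \leq 3$.

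Next compute $p(H) = 17(13-\ell) - 15(14-\ell) = 11 - 2\ell$, so $p(H) \in \{11,9,7,5\}$ for $\ell \in \{0,1,2,3\}$. For $\ell \in \{1,2,3\}$ we have $p(H) \leq 9$, so Lemma~\ref{potentials7} forces $H = G$. But then $G \cong \theta(\ell, 7-\ell, 7-\ell)$, and a direct check exhibiting compatible walk-lengths in $C_7$ between the endpoints of the three internal paths (for instance, mapping these endpoints to vertices at distance $3$ or $4$ in $C_7$) gives a homomorphism $G \to C_7$, contradicting the $C_7$-criticality of $G$.

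The main obstacle is the case $\ell = 0$, where $p(H) = 11$ is consistent with either $H = G$ or $G \in P_5(H)$. In the former, $G$ is two 7-cycles sharing a vertex, which evidently admits a homomorphism to $C_7$, a contradiction. In the latter, let $P$ be the $5$-path with distinct endpoints $x, y \in V(H)$. Since $C_7$ admits a walk of length $5$ between any two distinct vertices (and no closed walk of length $5$ at a single vertex), the $C_7$-criticality of $G$ would force $\phi(x) = \phi(y)$ for every homomorphism $\phi: H \to C_7$. However, any homomorphism of a $7$-cycle into $C_7$ is an isomorphism, so once $\phi(v)$ is fixed for the shared vertex $v$ of $H$, the restrictions of $\phi$ to each of the two $7$-cycles have two independent orientation choices; a short check of the four resulting sign patterns, together with $x \neq y$, shows some choice yields $\phi(x) \neq \phi(y)$, a contradiction.
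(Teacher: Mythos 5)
Your proof is correct in its main thrust but diverges from the paper's in three places, with one genuine (though fixable) gap.

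\textbf{Multi-component reduction.} The paper handles this in one line via potential: if $C \cap C'$ has components $P, P', \ldots$ (each a path with potential $\geq 17$), then $p(H) = p(C) + p(C') - p(C \cap C') \leq 14 + 14 - 17 - 17 = -6 < 3$, contradicting Lemma~\ref{potentials7}. Your girth-inequality argument (pairing arcs $A_1 \cup B_1$, $A_2 \cup B_2$, $A_1 \cup B_2$, $A_2 \cup B_1$, each of length $\geq 7$, and extracting equalities that force a half-integer arc) does work for exactly two components, but it is not spelled out and does not literally apply when there are three or more components, where the pairing structure and cyclic orderings are more complicated. The paper's potential bound dispatches all of these cases at once. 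You should either redo this case with potential or at least explain the general pairing argument.

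\textbf{Theta case, $\ell \in \{1,2,3\}$.} You correctly conclude $H = G \cong \theta(\ell, 7-\ell, 7-\ell)$ and then exhibit an explicit homomorphism to $C_7$. The paper instead just cites the folklore fact that no theta graph is $C_{2t+1}$-critical (and indeed uses this unproved statement in the proof of Lemma~\ref{girth} as well). Your route is more self-contained, though the parenthetical hint ``mapping these endpoints to vertices at distance $3$ or $4$'' does not quite work for $\ell = 1$, where the single-edge path forces the images to be adjacent; a case split on $\ell$ is needed.

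\textbf{Case $\ell = 0$, $G \in P_5(H)$.} Here you and the paper genuinely diverge. The paper first argues that the added $5$-path $Q$ must go from $C - v$ to $C' - v$ (else $v$ is a cut vertex), then views $C$ as a cell of degree $3$ incident with the $4$-string $Q$ and the two strings forming $C'$, computes $\mathrm{wt}(C) = 9$, and invokes Lemma~\ref{cell-weight} ($\mathrm{wt}(C) \leq 8$). Your argument instead observes that a homomorphism of $H$ to $C_7$ extends along $Q$ unless $\phi(x) = \phi(y)$, and then enumerates the four homomorphisms of $H$ fixing $\phi(v)$ (two orientation choices per $7$-cycle) to find one with $\phi(x) \neq \phi(y)$. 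This orientation argument is valid: when $x,y$ lie in the same cycle, injectivity of $\phi$ on that cycle immediately gives $\phi(x) \neq \phi(y)$; when $x \in C - v$, $y \in C' - v$, requiring $\phi(x) = \phi(y)$ in all four orientation combinations would force the impossible equation $c_b = c_{7-b}$. Your approach is more elementary; the paper's approach is terser because it leverages the cell-weight machinery already developed. Both are correct.

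So: correct and mostly a different (more hands-on) route, but you need to close the gap in the $\geq 3$ component reduction, most cleanly by using the potential bound as the paper does.
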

\begin{proof}Suppose not, and let $H = C \cup C'$.  First suppose $C \cap C'$ has two components $P$ and $P'$. Since paths have potential at least 17, $p(H) \leq p(C) + p(C') - p(P) - p(P') \leq 14 + 14 -17 - 17 \leq -6$. This contradicts Lemma \ref{potentials7}. 

Thus we may assume the cycles $C$ and $C'$ intersect in a single path $P$ of length $k$. Note $0 \leq k \leq 3$, as otherwise $H$ (and thus $G$) contains a cycle of length at most six, contradicting Lemma \ref{girth}. The potential of $H$ is given by $p(H) = p(C) + p(C') - p(P) =14 + 14 -(2k+17) = 11-2k$. Note $H \neq G$ since no theta graph is $C_7$-critical. By Lemma \ref{potentials7}, we have that $p(H) \geq 10$. Since $p(H) \leq 11$ we have $k=0$, and by Lemma \ref{potentials7} $G \in P_5(H)$. Since $k=0$, $P$ is a single vertex $v$. Let $Q$ be the path of length five such that $G = H \cup Q$. Since $G$ is $C_7$-critical, $Q \cap (C-v) \neq \emptyset$ and $Q \cap (C'- v) \neq \emptyset$ as otherwise $v$ is a cut vertex, and no $C_7$-critical graph contains a cut-vertex by Lemma \ref{cut}.  The cell $C$ has degree three, and is incident with strings $S_1$, $S_2$ and $Q$, where $S_1 \cup S_2 = C'$. Note $Q$ is a 4-string, and $S_1$ and $S_2$ together contribute 5 to the weight of $C$. Thus $\textrm{wt}(C) = 9$. But this is a contradiction, since by Lemma \ref{cell-weight} $\textrm{wt}(C) \leq 8$. 
\end{proof}

%----------------------------------------- C_7/C_9s edge-disjoint

\begin{lemma}\label{7and9cycles}
Let $C$ and $C'$ be cycles of length seven and nine, respectively, in $G$. Then $C$ and $C'$ are edge-disjoint. 
\end{lemma}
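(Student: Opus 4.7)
I propose to follow the template of the proof of Lemma \ref{7cycles}. Suppose for contradiction that $C$ and $C'$ share an edge, and set $H = C \cup C'$. Since any path has potential at least $17$ and $p(C) + p(C') = 14 + 18 = 32$, if $C \cap C'$ had two or more components then $p(H) \leq 32 - 2 \cdot 17 = -2$, contradicting Lemma \ref{potentials7}. So $C \cap C' = P$ is a single path of length $k \geq 1$, and $H$ is a theta graph with three internally-disjoint paths of lengths $k$, $7-k$, and $9-k$ joining the endpoints of $P$; by Lemma \ref{girth} the third cycle, of length $16 - 2k$, has length at least $7$, forcing $k \leq 4$. A direct computation gives $p(H) = 15 - 2k$.

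I would then dispatch the cases using Lemma \ref{potentials7}. For $k \in \{3, 4\}$, $p(H) \in \{9, 7\}$ rules out each of $G \in P_3(H), P_4(H), P_5(H)$ and leaves only $H = G$; but no theta graph is $C_7$-critical, as observed in the proof of Lemma \ref{7cycles}, and this is a contradiction. For $k = 2$, $p(H) = 11$ forces $G \in P_5(H)$, and for $k = 1$, $p(H) = 13$ allows either $G \in P_4(H)$ or $G \in P_5(H)$. Thus in each surviving case $G = H \cup Q$ for some path $Q$ of length $\ell \in \{4,5\}$ with endpoints in $V(H)$ and internal vertices disjoint from $V(H)$.

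The main calculation is a weight computation on the cell $C$. By Lemma \ref{max-str} no string has more than four internal vertices, so any path in $H$ of length at least $6$ whose internal vertices all have degree $2$ in $H$ must contain an endpoint of $Q$ in its interior. For $k = 1$ both of the paths $C - P$ (length $6$) and $C' - P$ (length $8$) qualify, and their interiors are disjoint, so exactly one endpoint of $Q$ lies in each. Then the only vertices of $V(C)$ with degree at least $3$ in $G$ are the two endpoints of $P$ together with the new endpoint $x$ of $Q$, giving $\deg(C) = 3$ with three incident strings (the two sub-paths of $C' - P$ cut by the other endpoint $y$ of $Q$, together with $Q$ itself). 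Summing internal vertices gives $\mathrm{wt}(C) = (7-k) + (\ell - 1) = 5 + \ell \in \{9, 10\}$, exceeding the bound $5\deg(C) - 7 = 8$ of Lemma \ref{cell-weight}, a contradiction. For $k = 2$ the path $C - P$ has length only $5$, so it is allowed to be a $4$-string and need not contain an endpoint of $Q$; only $C' - P$ forces an endpoint of $Q$ in its interior, and the second endpoint $x$ of $Q$ may lie in the interior of $C - P$, in $V(P)$, or in the interior of $C' - P$.

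In each of the first three placements of $x$ the analogous cell-weight computation on $C$ gives $\mathrm{wt}(C) = 9 > 8$, and the contradiction follows. The main obstacle is the remaining subcase, in which both endpoints of $Q$ lie in the interior of $C' - P$; here $\deg(C) = 2$ and Lemma \ref{cell-weight} alone is not sharp enough. I would rule it out by applying Lemma \ref{4str} at $x$: the $4$-string $Q$ together with the two sub-paths of $C' - P$ incident at $x$ forces the path $S_1 \cup S_2$ through $x$ (from an endpoint of $P$ to $y$ along $C' - P$) to be contained in some $7$-cycle $C^{*}$. Since an endpoint of $P$ lies in $V(C) \cap V(C^{*})$, Lemma \ref{7cycles} gives $C^{*} = C$, but $x$ lies in the interior of $C' - P$ so $x \notin V(C)$, the desired contradiction.
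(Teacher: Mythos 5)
Your proposal is correct and follows the paper's overall template---reduce via potential to the case that $C\cap C'$ is a single path $P$ of length $k\in\{1,2\}$, identify the added path $Q$ from Lemma \ref{potentials7}, then contradict the cell-weight bound or Lemma \ref{7cycles}---but the $k=2$ case is handled by a genuinely different route. The paper pre-selects $C,C'$ with $|E(C\cap C')|$ \emph{maximal}, and in the subcase where $Q$ has one endpoint in the interior of each of $C-P$ and $C'-P$ it applies Lemma \ref{4str} at the endpoint in $C'$ to manufacture a cell sharing at least seven edges with $C'$, contradicting that maximality. You instead split on the location of the second endpoint $x$ of $Q$, kill the $x\in\mathrm{int}(C-P)$ and $x\in V(P)$ subcases directly by the cell-weight computation $\mathrm{wt}(C)=9>8$, and reserve Lemma \ref{4str} for the subcase where both endpoints of $Q$ lie in $\mathrm{int}(C'-P)$---where $\deg(C)=2$ and Lemma \ref{cell-weight} need not bite---finishing there via Lemma \ref{7cycles} applied to the resulting cell through $a$ and $x$. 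Your route therefore avoids the maximality hypothesis altogether, which is a genuine simplification of the paper's argument. One small slip to correct: after listing three placements of $x$ you write that the cell-weight computation handles ``the first three'' and then introduce a ``remaining subcase''; you mean ``the first two,'' since when $x\in\mathrm{int}(C'-P)$ the strings incident with $C$ carry at most $4$ units of weight and the bound may hold---exactly the point your next sentence correctly makes.
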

\begin{proof}
Suppose not. Let $C$ and $C'$ be the cycles of length 7 and 9, respectively, of $G$, chosen such that their intersection is maximal. Let $H = C \cup C'$. First suppose $C \cap C'$ has at least two components $P$ and $P'$. Since paths have potential at least 17, $p(H) \leq p(C) + p(C') - p(P) - p(P') \leq 14+18-17-17 = -2$, contradicting Lemma \ref{potentials7}.

Thus we may assume $C \cap C'$ is a single path $P$ of length $k$. Note since $C$ and $C'$ share an edge, $k \geq 1$. Furthermore, $k \leq 6$, as otherwise $H$ (and thus $G$) contains a cycle of length at most six, contrary to Lemma \ref{girth}. The potential of $H$ is therefore given by $p(H) = p(C) + p(C') - p(P) =18 + 14 -(2k+17) =15-2k$. Note $H \neq G$, since $H$ is a theta graph and no such graph is $C_7$-critical. By Lemma \ref{potentials7}, we have that $p(H) \geq 10$. Since $p(H) \leq 15$, we have $k \in \{1, 2\}$.   We now break into cases according to the value of $k$.
\vskip 4mm
\noindent{
\emph{Case 1: $k=1$.}} By Lemma \ref{potentials7}, $G \in P_4(H) \cup P_5(H)$. Let $Q_i$ be the path of length $i \in \{4,5\}$ with $G = H \cup Q_i$.  Note since $G$ is $C_7$-critical, $Q_i \cap P  = \emptyset$. To see this, suppose not. Then at least one of $(C\setminus V(P)) \cap Q_i$ and $(C' \setminus V(P)) \cap Q_i$ is the empty set. But then $C$ or $C'$ contains a $j$-string with $j \geq 5$, contradicting Lemma \ref{max-str}. Thus $Q_i \cap P = \emptyset$ and both $Q_i \cap C \neq \emptyset$ and $Q_i \cap C' \neq \emptyset$. The cycle $C$ has degree three, and its incident strings are $S_1$, $S_2$, and $Q_i$, where $S_1 \cup S_2 \cup P = C'$. $Q_i$ contributes $i-1$ to the weight of $C$, and $S_1$ and $S_2$ together contribute 6. Thus $\textrm{wt}(C) = i + 5$. Since $i \geq 4$, we have that $\textrm{wt}(C) \geq 9$, contradicting Lemma \ref{cell-weight}. 
\vskip 4mm

\noindent{\emph{Case 2: $k=2$.}} By Lemma \ref{potentials7}, $G \in P_5(H)$. Let $Q$ be the path of length five with $G = H \cup Q$. Note $(C' \setminus V(P)) \cap Q \neq \emptyset$ as otherwise $C'$ contains a 6-string, contrary to Lemma \ref{max-str}. Note therefore at least one of the endpoints of $P$ is not in $V(Q)$. Let $v$ be an endpoint of $P$ with $v \not \in V(Q)$. Let $u \in V(C')\setminus V(C)$ be adjacent to $v$. Suppose $(C \setminus V(P)) \cap Q = \emptyset$. Then $v$ is incident with a 4-string of $G$ contained in $C$, and so by Lemma \ref{4str}, $Pu$ is contained in a cell $C'' \neq C$. Since $P \subset C$, this contradicts Lemma \ref{7cycles}, since distinct 7-cycles in $G$ are vertex-disjoint. Thus we may assume $Q \cap P = \emptyset$, and both $Q \cap C \neq \emptyset$ and $Q \cap C' \neq \emptyset$. Let $w$ be the endpoint of $Q$ contained in $C'$. By Lemma \ref{4str}, since $Q$ is a 4-string, the path formed by $w$ and $w$'s neighbours in $C'$ is contained in a cell $C''$. Since every vertex in $C' \setminus V(P)$ except $w$ has degree exactly 2, it follows that $|E(C') \cap E(C'')| \geq |E(C')| - |E(P)| = 7$. But this is a contradiction, since $C'$ and $C$ were chosen to be the cycle of length 9 and 7 that have the largest intersection.
\end{proof}

%----------------------------------------- C_9s P_3-disjoint

\begin{lemma}\label{9cycles}
Let $C$ and $C'$ be distinct 9-cycles in $G$, with $V(C) \cap V(C') \neq \emptyset$. Their intersection is a path of length at most 2.
\end{lemma}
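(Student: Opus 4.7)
Following the pattern of Lemmas \ref{7cycles} and \ref{7and9cycles}, choose a counterexample pair $(C, C')$ of distinct $9$-cycles in $G$ with $|V(C) \cap V(C')|$ maximal, and set $H = C \cup C'$. Each $9$-cycle has $(17,15)$-potential $18$ and every path has potential at least $17$, so if $C \cap C'$ has two or more components then $p(H) \leq 18+18-17-17 = 2$, giving $H \neq G$ (since $p(G) \geq 3$) and then, as $H \neq C_7$, a contradiction with Lemma \ref{potentials7}. Hence $C \cap C'$ is a single path $P$ of length $k$; the third cycle of the resulting theta graph $H$ has length $18-2k$, so Lemma \ref{girth} forces $k \leq 5$. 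Assuming $k \geq 3$ for contradiction, $p(H) = 19-2k \in \{9,11,13\}$; since no theta graph is $C_7$-critical, $H \neq G$, and Lemma \ref{potentials7} both rules out $k = 5$ entirely and requires $G = H \cup Q$ with $Q$ a path of length $5$ when $k = 4$, and of length $4$ or $5$ when $k = 3$.

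The remainder is a case analysis on the positions of the two endpoints of $Q$ among $V(P)$, the interior of the ``$a$-path'' $C \setminus V(P)$, and the interior of the ``$b$-path'' $C' \setminus V(P)$. Lemma \ref{max-str} forces each of the $a$- and $b$-paths to contain at least one endpoint of $Q$ when $k = 3$. In most subcases I would locate a degree-three vertex $v$ carrying a $4$-string $S_3$---either $v = p_0$ or $p_k$ on a side left unbroken by $Q$, or (when $Q$ has length $5$) a $Q$-attachment $a_i$ or $b_j$---and invoke Lemma \ref{4str} to force the two remaining strings $S_1, S_2$ at $v$ into a common cell. The concatenated path $S_1 \cup S_2$ then has a fixed length and endpoints determined by the configuration, and a direct neighbourhood inspection in $G$ shows that either no closing path of the required length exists (violating Lemma \ref{4str}), or the resulting $7$-cycle must share an edge with $C$ or $C'$ (violating Lemma \ref{7and9cycles}). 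When $k = 3$, $Q$ has length $4$, and both endpoints of $Q$ lie in the strict interiors of the $a$- and $b$-paths, no $Q$-attachment is incident with a $4$-string; here Lemma \ref{3str} plays the analogous role, with one decomposition of the three strings at the attachment trivially contained in $C$ or $C'$ and the other forcing either a cell contradicting Lemma \ref{7and9cycles}, a length-$3$ closing path that does not exist in $G$ (contradicting Lemma \ref{3str}), or a new $9$-cycle $C'''$ whose intersection with $C'$ matches $|V(C) \cap V(C')|$---allowing one to swap the counterexample pair to $(C', C''')$ and re-run the argument in a strictly simpler attachment configuration.

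The most delicate case I expect is $k = 3$ with $Q$ of length $4$ and both $Q$-endpoints at symmetric middle positions of the $a$- and $b$-paths, where Lemma \ref{3str} is satisfied by $C$ and $C'$ under both choices of $S_3$. This case calls for the identification technique used in Lemmas \ref{3str} and \ref{deg4s}: identify the two $Q$-endpoints to form a quotient $G'$, verify via short-path analysis in $G$ that $G'$ has neither a $3$- nor a $5$-cycle, extract a $C_7$-critical subgraph $G'' \subseteq G'$ with $p(G'') \leq 2$ by minimality of $G$, and lift back to a proper subgraph $F \subsetneq G$ containing all of $Q$ (forced by Lemma \ref{max-str}), whose potential computed via Lemma \ref{potential-extension} contradicts Lemma \ref{potentials7}. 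Carrying this identification argument through uniformly while correctly accounting for every possible shape of $G''$ is the main bookkeeping obstacle.
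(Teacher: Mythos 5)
Your opening matches the paper: the potential computation rules out multiple components, forces $C\cap C'$ to be a single path $P$ of length $k$, and combining $p(H)=19-2k$ with Lemma \ref{potentials7} gives $k\in\{3,4\}$ and $G=H\cup Q$ with $Q$ of length $4$ or $5$. From there, though, the paper avoids the positional case analysis you outline. For $G\in P_5(H)$ it first notes that if $Q$ misses the interior of one side (say $C\setminus V(P)$) then the mirror map $a_i\mapsto b_i$ extends a homomorphism of $C'\cup Q$ across the untouched side --- an argument that covers $k=4$ as well as $k=3$, whereas your Lemma \ref{max-str} observation only covers $k=3$. Once $Q$ is known to meet both sides, the paper identifies the two $C$-neighbours $v_1,v_2$ of the $Q$-endpoint $q$ to a new vertex $v$ (and, in the $P_4(H)$ case, also the two $C'$-neighbours of $q'$ to $u$). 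This makes $q$ (and $q'$) degree-$2$, so $Q$ sits inside a $5$-string with endpoints $v$ (and $u$); Lemma \ref{max-str} plus minimum degree $2$ then forces the critical subgraph $G''$ to omit all of $Q$, leaving $G''$ inside a theta graph of girth at least $7$, which cannot contain a $C_7$-critical subgraph.

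The genuine gap in your proposal is the ``delicate case.'' Identifying the two $Q$-endpoints $q,q'$ to a single vertex $z$ turns $Q$ into a $4$-cycle through $z$, not a long string, so Lemma \ref{max-str} does not say anything about $Q$ and your claim that the lifted $F$ ``contains all of $Q$, forced by Lemma \ref{max-str}'' is unjustified. (In fact $2$-connectivity of a $C_7$-critical $G''$ forces $G''$ to \emph{omit} the $4$-cycle's interior, since $z$ would otherwise be a cut vertex.) Even if $Q$ is reattached when lifting, this identification deletes only one vertex, so the potential only drops by $17$; one gets $p(F)\le p(G'')+17(4)-15(4)\le 10$, which by Lemma \ref{potentials7} merely places $G\in P_5(F)$ and is not a contradiction. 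Identifying $q,q'$ also creates a triangle in $G'$ for some positions of $q,q'$, which the paper must and does avoid. The paper's device of identifying the \emph{neighbours} of $q,q'$ --- doubling the number of identifications and, crucially, converting $Q$ into a forbidden $5$-string --- is the key idea your sketch is missing. Finally, the ``swap to $(C',C''')$ and re-run'' step is not a valid induction, since the tracked intersection size does not strictly decrease; and in the midpoint-to-midpoint subcase, Lemma \ref{3str} gives only the trivial decomposition (the unique $3$-string at the attachment is $Q$, and the other two strings already lie in $C$), so it yields nothing.
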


\begin{proof}
Suppose not, and let $H = C \cup C'$. First suppose the cycles $C$ and $C'$ intersect in at least two paths $P$ and $P'$. Since paths have potential at least 17, we have $p(H) = p(C) + p(C') - p(P) - p(P') \leq 2$, contradicting Lemma \ref{potentials7}. 

Thus we may assume the cycles $C$ and $C'$ intersect in a single path $P$ of length $k \geq 3$. The potential of $H$ is given by $p(H) = p(C) + p(C') - p(P) = 18 + 18 -(2k+17)=19-2k$. Note $H \neq G$ since no theta graph is $C_7$-critical. Thus by Lemma \ref{potentials7}, $p(H) \geq 10$. Hence $k \leq 4$. Note by assumption $k \geq 3$. By Lemma \ref{potentials7}, $G \in P_5(H) \cup P_4(H)$. 

Suppose first $G \in P_5(H)$. Let $Q_5$ be the path with $G = H\cup Q_5$. Let $a$ and $b$ be the endpoints of the path $P = C \cap C'$. Suppose first $(C \setminus V(P)) \cap Q_5 = \emptyset$. Let $a_1a_2 ... a_{9-k-1} = C \setminus V(P)$, and let $b_1 ... b_{9-k-1} = C' \setminus V(P)$, labeled so that $aa_1...a_{9-k-1}bb_{9-k-1}...b_{1}a$ forms a cycle of length $2(9-k)$. Since $G$ is $C_7$-critical, $C' \cup Q_5$ has a homomorphism $\phi$ to $C_7$. But then $\phi$ extends to a homomorphism of $G$ by setting $\phi(a_i) = \phi(b_i)$ for each $i \in \{1, \dots, 9-k-1\}$.

Thus we may assume $ (C \setminus V(P)) \cap Q_5 \neq \emptyset$, and symmetrically, $(C' \setminus V(P)) \cap Q_5 \neq \emptyset$. Let $q \in V(Q_5 \cap C)$. Let $v_1 \neq v_2$ be neighbours of $q$ such that $\{v_1, v_2\} \subset V(C)$. Let $G'$ be the graph obtained from $G$ by identifying $v_1$ and $v_2$ to a new vertex $v$. Note if $G'$ admits a homomorphism $\phi$ to $C_7$, then $\phi$ extends to $G$ by setting $\phi(v_1) = \phi(v_2) = \phi(v)$. Therefore $G'$ contains a $C_7$-critical subgraph $G''$. Note there exists an edge in the $5$-string formed by $Q_5v$ that is not contained in $E(G'')$ by Lemma \ref{max-str}. Since $C_7$-critical graphs have minimum degree two by Lemma \ref{cut}, it follows that $E(Q_5v) \cap E(G'') = \emptyset$. Thus $G''$ is a subgraph of a theta graph $H'$. But no theta graph is $C_7$-critical. Since $H'$ has girth at least 7, we have that $G'' \not \in \{C_3, C_5\}$. But then $H'$ does not contain a $C_7$-critical subgraph, a contradiction.

We may therefore assume $G \not \in P_5(H)$, and so by Lemma \ref{potentials7}, we have $p(H) \geq 12$.  Since $p(H)= 19-2k$, it follows that $k = 3$. By Lemma \ref{potentials7}, $G \in P_4(H)$. Let $Q_4= q_0q_1q_2q_3q_4$ be the path such that $G = H \cup Q_4$. As above, $(C \setminus V(P))\cap Q_4 \neq \emptyset$ as otherwise a homomorphism $\phi: Q_4 \cup C' \rightarrow C_7$ extends to $G$, a contradiction.  Symmetrically, $(C \setminus V(P))\cap Q_4 \neq \emptyset$. Let $q \in V( C \cap Q_4)$, and let $q' \in V( C' \cap Q_4)$. Let $v_1$ and $v_2$ neighbour $q$, with $\{v_1, v_2\} \subset V(C)$. Similarly, let $u_1 \neq u_2$ be neighbours of $q'$ such that $\{u_1, u_2\} \subset V(C')$. Let $G'$ be the graph obtained from $G$ by both identifying $v_1$ and $v_2$ to a new vertex $v$, and identifying $u_1$ and $u_2$ to a new vertex $u$. Note if $G'$ admits a homomorphism $\phi$ to $C_7$, then $\phi$ extends to a homomorphism of $G$ by setting $\phi(v_1) = \phi(v_2) = \phi(v)$, and  $\phi(u_1) = \phi(u_2) = \phi(u)$. Therefore $G'$ contains a $C_7$-critical subgraph $G''$. Note that there exists an edge in the 5-string formed by $uQ_4v$ that is not contained in $G''$ by Lemma \ref{max-str}. Since $C_7$-critical graphs have minimum degree two by Lemma \ref{cut}, it follows $E(uQ_4v) \cap E(G'') = \emptyset$. Thus $G''$ is a subgraph of a theta graph $H'$. Note no theta graph is $C_7$-critical. Since $H'$ has girth at least 7, we have that $G'' \not \in \{C_3, C_5\}$. But then $H'$ does not contain a $C_7$-critical subgraph, a contradiction.
\end{proof}

%======================================================================
%-------------------------FORBIDDEN STRUCTURES-------------------------
%====================================================================== 

\subsection{Forbidden Structures}\label{forb}
The lemmas in this section are used to rule out the existence of certain configurations in $G$, and to establish the neighbouring structure of others. Lemmas \ref{(4,4,k)}, \ref{4,3,k}, and \ref{3,3,2} rule out the existence of certain types of vertices of degree three. In Lemma \ref{deg3cells}, we show that $G$ does not contain cells of low degree. Finally, Lemmas \ref{3,2,2s}, \ref{2,2,2s}, and \ref{(3,3,0)} establish the neighbouring structure of certain types of vertices not contained in cells. 

Given the structure established in the previous section, we are now equipped to rule out several types of degree three vertices. We note that in the discharging portion of the proof of Theorem \ref{main}, the problematic structures will be degree three vertices with weight at least six. In ruling out a subset of these types of vertices, we therefore shorten and simplify the discharging portion of the proof of Theorem \ref{main}.

%----------------------------------------- (4,4,*) verts DNE

\begin{lemma}\label{(4,4,k)}
$G$ does not contain a vertex of type $(4, 4, k)$, where $0 \leq k \leq 4$. 
\end{lemma}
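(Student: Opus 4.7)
The plan is a two-step reduction culminating in a contradiction with Lemma~\ref{7cycles}. First, by Lemma~\ref{weight}, any degree-$3$ vertex of $G$ has weight at most $(v(C_7)-2)\cdot 3 - v(C_7) = 8$. A vertex of type $(4,4,k)$ has weight $8+k$, so the cases $k \geq 1$ are immediately excluded, and it remains only to rule out $k=0$.

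So assume $v$ has type $(4,4,0)$, incident with distinct strings $T_1$, $T_2$ (both $4$-strings) and $T_3 = vw$ (a single edge). Each of $T_1 \cup T_3$ and $T_2 \cup T_3$ is a path on exactly $7$ vertices: four internal vertices of the relevant $4$-string, together with $v$, $w$, and the far endpoint of that $4$-string. Applying Lemma~\ref{4str} at $v$ with $T_2$ in the role of the $4$-string yields a cell $C_1$ containing $T_1 \cup T_3$; applying it with $T_1$ in that role yields a cell $C_2$ containing $T_2 \cup T_3$. Because each cell is a $7$-cycle on exactly $7$ vertices, each $C_i$ is the $7$-vertex path it contains, closed off by the unique edge joining that path's two degree-$1$ endpoints.

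To finish, I would argue that $C_1 \neq C_2$ while $\{v,w\} \subseteq V(C_1) \cap V(C_2)$, which contradicts Lemma~\ref{7cycles} (distinct $7$-cycles of $G$ must be vertex-disjoint). Distinctness follows from the inclusions $C_i \supseteq T_i$: if $C_1 = C_2$, then $T_1$ and $T_2$ would have the same vertex set and hence would coincide as strings, contradicting the fact that $v$ has type $(4,4,0)$, which requires three \emph{distinct} incident strings. In the seemingly borderline subcase where one might fear that $T_1$ and $T_2$ share both endpoints, Lemma~\ref{par-str} rules this out, since two $4$-strings have equal lengths modulo $2$.

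The only step requiring any real care\textemdash and thus the main (though quite minor) obstacle\textemdash is this final distinctness check for the two $7$-cycles produced by the two applications of Lemma~\ref{4str}. Once that is secured, Lemma~\ref{7cycles} delivers the contradiction automatically, completing the proof for all $0 \leq k \leq 4$.
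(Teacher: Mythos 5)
Your proof is correct and takes essentially the same approach as the paper: apply Lemma~\ref{4str} twice (once with each $4$-string in the role of $S_3$) to produce two cells each containing the third string, then invoke Lemma~\ref{7cycles}. The paper does not bother with the preliminary reduction via Lemma~\ref{weight} since the double application of Lemma~\ref{4str} handles all $k$ at once, and the cleanest way to see $C_1 \neq C_2$ is simply that $C_1 \supseteq T_1$ and $C_2 \supseteq T_2$, yet $T_1$ and $T_2$ have disjoint sets of four internal degree-two vertices, so $|V(T_1) \cup V(T_2)| \geq 9 > 7$; your detour through ``same vertex set implies same string'' and Lemma~\ref{par-str} is more roundabout than necessary but reaches the right conclusion.
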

\begin{proof}
Suppose not. Then there exists a vertex $v$ of type $(4,4,k)$ with neighbours $a, b$, and $c$, where $a$ is contained in a 4-string $S_a$, and $b$ is contained in a 4-string $S_b \neq S_a$ by Lemma \ref{cut}.  Lemma \ref{4str} applied to $v$ and $S_a$ implies the edge $vc$ is contained in a cell $C$. Lemma \ref{4str} applied to $v$ and $S_b$ implies $vc$ is contained in a cell $C' \neq C$. This contradicts Lemma \ref{7cycles}, since distinct cells are vertex-disjoint.
\end{proof}

%----------------------------------------- (4,3,*) verts DNE

\begin{lemma}\label{4,3,k}
$G$ does not contain a vertex of type $(4,3,k)$, where $1 \leq k \leq 3$.
\end{lemma}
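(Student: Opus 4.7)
The plan is to pit Lemma \ref{4str} and Lemma \ref{3str} against each other: each of them will produce a large cycle of $G$, and both resulting cycles will contain the short string $S_c$. The shared structure will then contradict either Lemma \ref{7cycles} or Lemma \ref{7and9cycles}. Suppose for contradiction that $v$ is a vertex of type $(4,3,k)$ for some $1 \le k \le 3$, and let $S_a$, $S_b$, $S_c$ denote the incident $4$-string, $3$-string, and $k$-string, respectively. Let $va$, $vb$, $vc$ be the three distinct edges of $G$ at $v$, lying on $S_a$, $S_b$, $S_c$ in turn.

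First, apply Lemma \ref{4str} with $S_a$ playing the role of the $4$-string: this places $S_b \cup S_c$ inside a cell $C_1$. Counting edges gives $|E(S_b \cup S_c)| = 4 + (k+1) = k+5$, while a cell has only $7$ edges, so $k = 3$ is already impossible. For $k \in \{1,2\}$ the containment is consistent. Next, apply Lemma \ref{3str} with $S_b$ as the $3$-string; its hypothesis is satisfied since $S_a$ has $5$ edges and $S_c$ has $k + 1 \ge 2$ edges. The lemma provides a cell or $9$-cycle $C_2$ containing $S_a \cup S_c$. Because $|E(S_a \cup S_c)| = k + 6$, when $k = 2$ the container $C_2$ must be a $9$-cycle (it has too many edges to be a cell); when $k = 1$, either option is a priori possible.

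The key observation is that $S_c \subseteq S_b \cup S_c \subseteq C_1$ and $S_c \subseteq S_a \cup S_c \subseteq C_2$, so $S_c \subseteq C_1 \cap C_2$. Since $S_c$ has $k + 1 \ge 2$ edges, $C_1$ and $C_2$ share at least two edges and in particular the vertex $v$. Moreover $C_1 \ne C_2$: the two edges of $C_1$ incident to $v$ must be $vb$ and $vc$, while the two edges of $C_2$ incident to $v$ are $va$ and $vc$, and $va \ne vb$. If $C_2$ is a $9$-cycle then the fact that the cell $C_1$ and the $9$-cycle $C_2$ share an edge directly contradicts Lemma \ref{7and9cycles}. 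Otherwise $C_2$ is a cell, in which case $C_1$ and $C_2$ are two distinct cells both containing the vertex $v$, contradicting the vertex-disjointness of distinct cells given by Lemma \ref{7cycles}.

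I expect the only nontrivial bookkeeping to be the edge-count arguments showing that Lemma \ref{4str} outright eliminates $k = 3$ and that $C_2$ must be a $9$-cycle when $k = 2$; everything else follows immediately from the previously established lemmas together with the fact that $v$ has degree exactly three.
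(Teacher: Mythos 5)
Your proof is correct and takes essentially the same approach as the paper: apply Lemma \ref{4str} and Lemma \ref{3str} at $v$ to obtain a cell $C_1$ and a cell-or-$9$-cycle $C_2$, both containing $S_c$, then conclude via Lemma \ref{7cycles} or Lemma \ref{7and9cycles}. The extra edge-count case analysis (separately eliminating $k=3$, forcing $C_2$ to be a $9$-cycle when $k=2$) is correct but unnecessary, since the paper's argument treats all $k\in\{1,2,3\}$ uniformly by using only the single shared edge $vc$ rather than all of $S_c$.
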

\begin{proof}
Suppose not. Then for some $k \in \{1,2,3\}$, there exists a vertex $v$ of type $(4,3,k)$. Let $a$ be a vertex in $N(v)$ that is contained in a 4-string $S_a$, and let $b$ be a vertex in $N(v)$ contained in a 3-string $S_b$. Let $c \in N(v) \setminus \{b,a\}$ be the other neighbour of $v$. By applying Lemma \ref{4str} to $v$ and $S_a$, we have that the path $bvc$ is contained in a cell $C$. By applying Lemma \ref{3str} to $v$ and $S_b$, we have that the path $avc$ is contained in a cycle $C'$ of length either seven or nine.  In particular, the edge $vc$ is contained in $E(C' \cap C)$. First suppose $C'$ is of length seven. This contradicts Lemma \ref{7cycles} as distinct cells are vertex-disjoint. Thus we may assume $C'$ is of length nine. This contradicts Lemma \ref{7and9cycles}, since 7-cycles and 9-cycles are edge-disjoint. 
\end{proof}

%----------------------------------------- (3,3,2) verts DNE

\begin{lemma}\label{3,3,2}
$G$ does not contain a vertex of type $(3,3,2)$. 
\end{lemma}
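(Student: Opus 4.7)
The plan is to argue by contradiction in the same style as the proofs of Lemmas \ref{(4,4,k)} and \ref{4,3,k}. Suppose $v$ is a vertex of type $(3,3,2)$ with neighbours $a$, $b$, $c$, where $a$ lies on a $3$-string $S_a$, $b$ lies on a $3$-string $S_b$, and $c$ lies on a $2$-string $S_c$. Every string incident with $v$ has at least $3$ edges, so Lemma \ref{3str} applies with either $3$-string playing the role of $S_3$. Applying it with $S_3 = S_a$ gives a cell or $9$-cycle $C_1 \supseteq S_b \cup S_c$; applying it with $S_3 = S_b$ gives a cell or $9$-cycle $C_2 \supseteq S_a \cup S_c$. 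Both $C_1$ and $C_2$ contain $v$ and the entire path $S_c$ (which has length $3$).

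Next I would split into cases according to the lengths of $C_1$ and $C_2$. Note that $|E(S_a \cup S_c)| = |E(S_b \cup S_c)| = 4+3 = 7$, so if $C_i$ is a cell, it must coincide exactly with its contained union of strings (forcing the far endpoints of the two strings to coincide), while if $C_i$ is a $9$-cycle, then it consists of that union of strings together with a path of length $2$ joining the two far endpoints. In either event, along $C_1$ the two edges at $v$ are $v b_1$ and $v c_1$ (where $b_1,c_1$ are the neighbours of $v$ on $S_b,S_c$), and along $C_2$ they are $v a_1$ and $v c_1$; since $a_1 \neq b_1$, we have $C_1 \neq C_2$.

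I would then eliminate each case using the intersection lemmas. If $C_1$ and $C_2$ are both cells, they are distinct $7$-cycles sharing $v$, contradicting Lemma \ref{7cycles}. If one is a cell and the other is a $9$-cycle, they share the edges of $S_c$, contradicting Lemma \ref{7and9cycles}. If both are $9$-cycles, their intersection contains $S_c$, a path of length $3$, contradicting Lemma \ref{9cycles} (which bounds the intersection of distinct $9$-cycles by a path of length at most $2$). All cases yield a contradiction, so no vertex of $G$ is of type $(3,3,2)$.

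The argument is short and the main thing to watch is edge counting: one must verify that $S_b \cup S_c$ (respectively $S_a \cup S_c$) has precisely $7$ edges, so that containment in a cell pins down the cycle exactly, and that in every case $C_1 \neq C_2$ so that Lemmas \ref{7cycles} and \ref{9cycles} can be invoked. Given those observations, the case analysis follows the template already established in the two preceding lemmas.
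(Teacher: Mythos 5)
Your proof is correct and takes essentially the same approach as the paper: apply Lemma \ref{3str} twice to obtain two cycles of length seven or nine both containing $S_c$, then eliminate each of the three cases using Lemmas \ref{7cycles}, \ref{7and9cycles}, and \ref{9cycles}. Your extra care in verifying $C_1 \neq C_2$ (by comparing the edges at $v$) is a detail the paper asserts without comment, but the argument is the same.
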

\begin{proof}
Suppose not. Then there exists a vertex $v$ of type $(3,3,2)$. Let $a$ be a neighbour of $v$ that is contained in a 3-string $S_a$. Let $b \neq a$ be a neighbour of $v$ contained in a 3-string $S_b$. Finally, let $c$ be the neighbour of $v$ contained in a 2-string $S_c$.  By applying Lemma \ref{3str} to the $v$ and $S_a$, we have that $S_c$ is contained in a cycle $C$ of length either seven or nine. By applying Lemma \ref{3str} to $v$ and $S_b$, we have that the path $S_c$ is contained in a cycle $C' \neq C$ of length either seven or nine. Suppose first $C$ and $C'$ are both cells. Since $S_c \in C \cap C'$, this contradicts Lemma \ref{7cycles} as cycles of length seven are vertex disjoint. Suppose next that one of $C$ and $C'$ is a 9-cycle, and the other is a cell. Since $S_c \in C \cap C'$, this contradicts Lemma \ref{7and9cycles} as 7-cycles and 9-cycles are edge disjoint. Thus we may assume both $C$ and $C'$ are 9-cycles. But this contradicts Lemma \ref{9cycles}, as distinct 9-cycles intersect in a path of length at most two.
\end{proof}

%----------------------------------------- deg 2 cells
The following lemma is used to lower-bound the degree of cells in $G$. This will be useful in the discharging portion of the proof.  
\begin{lemma}\label{deg3cells}
$G$ does not contain a cell of degree at most two.
\end{lemma}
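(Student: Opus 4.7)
The plan is to argue by contradiction, assuming $C$ is a cell with $\deg(C) \leq 2$, and eliminating possibilities in order. I would first use Lemma \ref{cell-weight} to reduce to $\deg(C) = 2$: the bound $\textrm{wt}(C) \leq 5\deg(C) - 7$ makes $\deg(C) = 1$ impossible (as $\textrm{wt}(C) \geq 0$), and $\deg(C) = 0$ would force $C$ to be a connected component of $G$, so $G = C_7$, which is not $C_7$-critical since it admits the identity homomorphism. Let $S_1, S_2$ be the two strings incident with $C$, with endpoints $u_1, u_2 \in V(C)$ guaranteed by Lemma \ref{stringsincells}. If $u_1 = u_2$, then that vertex is a cut vertex of $G$, contradicting Lemma \ref{cut}. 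Otherwise $C$ splits into two arcs $P_1, P_2$ of lengths $k_1 \leq k_2$ with $k_1 + k_2 = 7$, so $(k_1, k_2) \in \{(1,6), (2,5), (3,4)\}$.

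Next I would dispatch each value of $(k_1, k_2)$. For $(1, 6)$, the arc $P_2$ is a 5-string, violating Lemma \ref{max-str}. For $(2, 5)$, $P_2$ is a 4-string at $u_1$, so Lemma \ref{4str} places $P_1 \cup S_1$ inside some cell $C'$; but $P_1 \subseteq C \cap C'$ shares multiple vertices, so Lemma \ref{7cycles} gives $C = C'$, forcing $S_1 \subseteq C$ and contradicting the fact that $S_1$ is an incident (hence external) string. For $(3, 4)$, analogous application of Lemma \ref{3str} (using the 3-string $P_2$ at $u_1$) forces $s_1 = s_2 = 0$, where $s_i$ is the number of interior vertices of $S_i$: otherwise both non-$P_2$ strings at $u_1$ have at least two edges, and $P_1 \cup S_i$ would lie in a cell or 9-cycle sharing the 3-edge path $P_1$ with $C$, contradicting Lemma \ref{7cycles} or Lemma \ref{7and9cycles}. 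Writing $v_i$ for the other endpoint of $S_i = u_i v_i$, Lemma \ref{girth} gives $v_1 \neq v_2$, since otherwise $u_1 v_1 u_2 P_1 u_1$ would be a 5-cycle.

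The main obstacle is then the remaining case $(3, 4)$ with both $S_i$ single edges. I would exploit the 2-cut $\{u_1, u_2\}$: let $B$ be the subgraph $G[V(G) \setminus (V(C) \setminus \{u_1, u_2\})]$ on the non-cell side. Since $v_i$ is the only neighbor of $u_i$ in $B$, any homomorphism $\phi : B \to C_7$ satisfies $\phi(u_i) \in \{\phi(v_i) \pm 1\}$, and therefore $\phi(u_2) - \phi(u_1) \in (\phi(v_2) - \phi(v_1)) + \{-2, 0, 2\} \pmod 7$. On the other hand, any homomorphism of $G$ to $C_7$ must restrict to an automorphism on $C$, forcing $\phi(u_2) - \phi(u_1) \in \{\pm 3\} \pmod 7$ since $u_1$ and $u_2$ are at arc distances $3$ and $4 \equiv -3 \pmod 7$ on $C$. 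Because $G$ admits no homomorphism to $C_7$, each of the three possible values $d + \{-2, 0, 2\}$ arising from an extension must avoid $\{\pm 3\} \pmod 7$, and a direct check forces $d := \phi(v_2) - \phi(v_1) \equiv 0 \pmod 7$ for every hom of $B$.

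The contradiction follows quickly: choose any edge $e \in E(P_2)$. The proper subgraph $G - e$ admits a homomorphism $\phi$ to $C_7$ by $C_7$-criticality of $G$. Restricting $\phi$ to $B$ yields $\phi(v_1) = \phi(v_2)$ and hence $\phi(u_2) - \phi(u_1) \in \{-2, 0, 2\} \pmod 7$, while restricting to $P_1$ (a 3-path still intact in $G - e$) yields $\phi(u_2) - \phi(u_1) \in \{\pm 1, \pm 3\} \pmod 7$. These sets are disjoint modulo 7, so no such $\phi$ exists, contradicting the criticality of $G$. Hence $\deg(C) \geq 3$.
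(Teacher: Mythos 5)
Your proof is correct, and in the core case it takes a genuinely different route from the paper. You dispatch $\deg(C)\in\{0,1\}$ via the cell-weight bound and a component argument (the paper uses cut-vertex arguments from Lemma~\ref{cut}); both are fine. Your $(1,6)$ case is ruled out directly by Lemma~\ref{max-str}, and your $(2,5)$ case is the same as the paper's ``$P$ is a 4-string'' case: Lemma~\ref{4str} plus vertex-disjointness of cells (Lemma~\ref{7cycles}). The real divergence is the $(3,4)$ case. The paper proceeds by identifying $u$'s and $v$'s outer and inner neighbours pairwise, verifying the identified graph has no short cycles, extracting a $C_7$-critical subgraph, and invoking the potential machinery (Lemma~\ref{potentials7}). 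You instead use Lemma~\ref{3str} at each cell vertex to force both external strings to be single edges (sharing $P_1$ with a cell or 9-cycle would contradict Lemma~\ref{7cycles} or \ref{7and9cycles}), and then close with a clean walk-counting argument: since $u_1,u_2$ are pendant in $B=G-(V(C)\setminus\{u_1,u_2\})$, every homomorphism of $B-\{u_1,u_2\}$ extends freely to $B$, so the criterion for non-extendability to $G$ (namely $\phi(u_2)-\phi(u_1)\notin\{\pm 3\}$) must hold for all choices $\phi(u_i)\in\{\phi(v_i)\pm1\}$, forcing $\phi(v_1)=\phi(v_2)$; deleting an edge of $P_2$ then gives a homomorphism with $\phi(u_2)-\phi(u_1)\in\{-2,0,2\}$ from the $B$ side but $\in\{\pm1,\pm3\}$ from the surviving $P_1$ side, and these sets are disjoint mod $7$. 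I checked the arithmetic ($d+\{-2,0,2\}$ avoids $\{3,4\}$ only when $d\equiv0$) and it is right. Your argument is more elementary and local, trading the identification-plus-potential machinery for an explicit homomorphism count along short paths; the paper's version is a more uniform application of the toolkit used throughout Section~3. Both are valid proofs of the lemma.
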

\begin{proof}
Suppose not. Note since $G$ is $C_7$-critical, $G$ is not a cell. Thus if $G$ contains a cell of degree 0, we have that $G$ is disconnected, contradicting Lemma \ref{cut}. If $G$ contains a cell of degree one, then $G$ contains a cut vertex, contradicting Lemma \ref{cut}. 

We may therefore assume $G$ contains a cell $C$ of degree two. Let $P$ be a longest string contained in $C$. Note $P$ is a $k$-string with $k \geq 3$ since $\deg(C) = 2$. Note that $k\leq 4$ by Lemma \ref{max-str}. Suppose first that $P$ is a 4-string, and let $v$ be an endpoint of $P$. Note since $C$ has degree 2, it follows that $v$ has degree 3. Let $u_1 \neq u_2$ be neighbours of $v$, such that $P \cap \{u_1, u_2\} = \emptyset$. By Lemma \ref{4str}, the path $u_1vu_2$ is contained in a cell $C'$. But since $v$ is also contained in $C$, we have that $V(C) \cap V(C') \neq \emptyset$, contradicting Lemma \ref{7cycles}. 

Thus we may assume $P$ is a 3-string. Let $u \neq v$ be the endpoints of $P$. Let $u_1$ be a vertex in $V(C) \cap N(u)$, with $u_1 \not \in V(P)$. Similarly, let $v_1$ be a vertex in $V(C) \cap N(v)$, with $v_1 \not \in V(P)$.  Let $u_2$ be a vertex in $N(u) \setminus V(C)$, and let $v_2$ be a vertex in $N(v) \setminus V(C)$. Note $v_2 \neq u_2$, as otherwise $v_2Pv_2$ is a cycle of length 6 in $G$, contradicting Lemma \ref{girth}. Furthermore, $v_2$ and $u_2$ are not adjacent as otherwise the cell $v_2Pu_2v_2 \neq C$ intersects $C$ contradicting Lemma \ref{7cycles}. 

Let $G'$ be the graph obtained from $G$ by both identifying $u_1$ and $u_2$ to a new vertex $z_u$, and identifying $v_1$ and $v_2$ to a new vertex $z_v$. 

\noindent{
\textbf{Claim 1.} \emph{$G'$ does not contain a triangle.}}
\begin{proof}
Suppose not. Since $G$ does not contain a 5-cycle, a triangle in $G'$ contains both $z_u$ and $z_v$. But then the path  $u_2uu_1v_1vv_2$ is contained in a cell $C' \neq C$. This contradicts Lemma \ref{7cycles}.
\end{proof}

\noindent{
\textbf{Claim 2.} \emph{$G'$ does not contain a 5-cycle.}
}
\begin{proof}
Suppose not. Let $K$ be a 5-cycle contained in $G'$. Since $K \not \subset G$, we have that at least one of $z_u$ and $z_v$ is contained in $V(K)$. Suppose first exactly one of $z_u$ and $z_v$ is contained in $V(K)$. Without loss of generality, suppose $z_u \in V(K)$. Then the path $u_1uu_2$ is contained in a cell $C'$ in $G$. Since $u \in V(C \cap C')$, this contradicts Lemma \ref{7cycles}. 

Thus we may assume both $z_u$ and $z_v$ are contained in $V(C')$, and that the path $u_2uu_1v_1vv_2$ is contained in a 9-cycle $C'$ in $G$. But then the path $uu_1v_1v$ is contained in both $C'$ and $C$, contradicting Lemma \ref{7and9cycles}.
\end{proof}

Note $G'$ does not admit a homomorphism $\phi$ to $C_7$, as any such homomorphism extends to $G$ by setting $\phi(u_1) = \phi(u_2) = \phi(z_u)$, and $\phi(v_1)=\phi(v_2)=\phi(v)$. Thus $G'$ contains a $C_7$-critical subgraph $G''$. By Claims 1 and 2, we have that $G'' \not \in \{C_3, C_5\}$. Since $v(G'') < v(G)$ and $G$ is a minimum counterexample, $p(G'') \leq 2$.  Note by Lemma \ref{max-str}, $P \not \subseteq G''$ since $z_uPz_v$ is a 5-string. Furthermore, $G''$ contains at least one of $\{z_u, z_v\}$ since $G'' \not \subseteq G$. 

Suppose first $G''$ contains exactly one of $\{z_u, z_v\}$, and without loss of generality suppose $z_v \in V(G'')$. Let $F$ be the graph obtained from $G''$ by splitting $z_v$ back into $v_1$ and $v_2$ and adding $v_1vv_2$. We have $p(F) \leq p(G'') + 17(2)-15(2) \leq 6$, and so $F$ contradicts Lemma \ref{potentials7}.

Thus we may assume $G''$ contains both of $\{z_u, z_v\}$. Let $F$ be the graph obtained from $G''$ by splitting $z_v$ back into $v_1$ and $v_2$ and adding the path $v_1vv_2$. Let $F'$ be obtained from $F$ by splitting $z_u$ back into $u_1$ and $u_2$ and adding the path $u_1uu_2$. We have $p(F') \leq p(G'') + 17(4) - 15(4) \leq 10$. By Lemma \ref{potentials7}, $G \in P_5(F')$ or $F' = G$. But since $P\setminus \{u,v\} \not \in F'$ and $P$ is not a 4-string, this is a contradiction. 
\end{proof}

Finally, the last three lemmas in this section establish the neighbouring structure of certain types of vertices not contained in cells.  These lemmas will be useful in the discharging portion of the proof of Lemma \ref{main}. 

%--------------------------------------------- (3,2,2)s
\begin{lemma}\label{3,2,2s}
Let $v \in V(G)$ be a vertex of type (3,2,2) that is not contained in a cell. If $a \neq b$ are the vertices that share a 2-string with $v$ and $\deg(a)=\deg(b)=3$, then at least one of $a$ and $b$ is contained in a cell.
\end{lemma}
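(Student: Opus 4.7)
The plan is to assume for contradiction that neither $a$ nor $b$ is in a cell, then create an auxiliary graph $G'$ by identifying the outside neighbors of $a$ and $b$ and apply the structural lemmas already established. Let the two 2-strings be $vy_1y_2a$ and $vz_1z_2b$, and write $\{a_1,a_2\} = N(a) \setminus \{y_2\}$ and $\{b_1,b_2\} = N(b) \setminus \{z_2\}$. Since $y_1$ and $y_2$ have degree 2, any cell using the edge $ay_2$ would have to contain the whole path $vy_1y_2a$ and thus $v$; since $v$ is not in a cell, our hypothesis forces the path $a_1aa_2$ not to lie in any cell, and symmetrically $b_1bb_2$ not to lie in any cell. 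Let $G'$ be obtained from $G$ by identifying $a_1$ with $a_2$ to form a vertex $z_a$, and $b_1$ with $b_2$ to form $z_b$ (taking the resulting graph to be simple).

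The first step is to show that $G'$ contains no triangle and no 5-cycle. Any short odd cycle in $G'$ must use $z_a$ or $z_b$. A short cycle using only $z_a$ pulls back either to a short odd cycle in $G$ (ruled out by Lemma \ref{girth}) or to a 7-cycle in $G$ containing $a_1aa_2$ (contrary to our assumption), and symmetrically for $z_b$. A short cycle using both $z_a$ and $z_b$ either violates girth in $G$ or produces two distinct 9-cycles in $G$ sharing a path of length at least 3, contradicting Lemma \ref{9cycles}. Any homomorphism from $G'$ to $C_7$ extends to $G$ by setting $\phi(a_i) = \phi(z_a)$ and $\phi(b_i) = \phi(z_b)$, so $G'$ admits no such homomorphism and contains a $C_7$-critical subgraph $G''$. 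By the claim, $G'' \notin \{C_3, C_5\}$, and minimality of $G$ gives $p(G'') \leq 2$. Since $G'' \not\subseteq G$, at least one of $z_a, z_b$ lies in $V(G'')$.

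Next I would split back and apply Lemma \ref{potentials7}. If exactly one of $z_a, z_b$, say $z_a$, lies in $V(G'')$, let $F$ be obtained from $G''$ by splitting $z_a$ back into $a_1, a_2$; then $v(F) = v(G'')+1$ and $e(F) = e(G'')+1$, so $p(F) = p(G'')+2 \leq 4$. But $F$ is missing $b_1, b_2$, so $F \neq G$, and $p(F) \leq 4$ then contradicts Lemma \ref{potentials7} (all remaining cases require $p(F) \geq 10$). If both $z_a, z_b \in V(G'')$, splitting both back gives $p(F) \leq p(G'')+4 \leq 6$, which by Lemma \ref{potentials7} forces $F = G$; comparing vertex and edge counts yields $G'' = G'$, so $G'$ is itself $C_7$-critical. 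In $G'$, however, $v$ retains degree 3, while $a$ and $b$ now have degree 2, so $v$ is incident with three 3-strings (to $c$, to $z_a$, and to $z_b$), making $v$ a type $(3,3,3)$ vertex of weight $9$ in the $C_7$-critical graph $G''$; this violates the bound $\textrm{wt}(v) \leq 5 \cdot 3 - 7 = 8$ given by Lemma \ref{weight}.

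The main obstacle is the case analysis for the claim that $G'$ contains no triangle or 5-cycle, particularly for cycles traversing both $z_a$ and $z_b$; one must carefully enumerate the ways such a cycle pulls back to $G$ and use the girth together with Lemma \ref{9cycles} in each subcase. A minor complication is the degenerate situation where $\{a_1, a_2\}$ and $\{b_1, b_2\}$ overlap: the two identifications collapse into a single supervertex $z$, and one finishes instead by observing that $v$ and $z$ then share two distinct 3-strings (of the same parity) in $G'$, violating Lemma \ref{par-str}. Apart from these case checks, the proof is a clean potential calculation combined with the weight/parity restrictions established earlier in the section.
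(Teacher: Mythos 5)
Your auxiliary graph $G'$ is in fact the same one the paper constructs: Lemma~\ref{3str} produces a $9$-cycle $C$ through $v,a,b$, and identifying $a$'s off-cycle neighbour $x_1$ with its on-cycle neighbour $c_1$ (as the paper does) is identical to identifying $N(a)\setminus V(S_a)=\{a_1,a_2\}$ (as you do). But the potential arithmetic at the end does not hold up. If you only split $z_a$ back into $a_1,a_2$, then by girth the only possible common neighbour of $a_1,a_2$ in $G$ is $a$, so $e(F)=e(G'')$ unless $a\in V(G'')$; without $a\in V(G'')$ you get $p(F)=p(G'')+17$, not $p(G'')+2$, and there is no contradiction. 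Similarly in the ``both'' case your count $e(F)=e(G'')+2$ requires $a,b\in V(G'')$, from which you deduce $G''=G'$. However, one can check that $a,b\notin V(G'')$: if $a\in V(G'')$ then the minimum-degree-$2$ condition drags in $y_2,y_1,v$, and then $v$ either carries three $3$-strings in $G''$ (weight $9$, contradicting Lemma~\ref{weight}) or lies on a string of length at least $7$ (contradicting Lemma~\ref{max-str}). So the scenario your computation covers never occurs, and the one that actually does ($a,b\notin V(G'')$) is not handled. The paper closes exactly this gap by proving $v\notin V(G'')$ explicitly, then splitting the $z_i$ back \emph{and} re-attaching the two-edge paths $x_1ac_1$ (resp.\ $x_2bc_2$), giving $p(F)\leq p(G'')+4\leq 6$ in the one-vertex case and $p(F)\leq p(G'')+8\leq 10$ in the two-vertex case; then $F\subsetneq G$ together with $v\notin V(F)$, $\deg_G(v)=3$, contradicts Lemma~\ref{potentials7}.

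The claim that $G'$ has no triangle or $5$-cycle also needs more care in the subcase where both $z_a$ and $z_b$ appear. You assert this yields two distinct $9$-cycles sharing a path of length $\geq 3$, but you have not yet produced any $9$-cycle (you never invoke Lemma~\ref{3str}), and even once $C$ is available, when $z_a$ and $z_b$ are non-adjacent on the $5$-cycle the pullback does not readily give two $9$-cycles overlapping in a length-$\geq 3$ path; the natural contradictions there are a $7$-cycle through $a$ or $b$ (against the hypothesis that $a,b$ avoid cells) or an intersection that fails to be a single path. The paper handles this cleanly by first establishing $C$, then noting via Lemmas~\ref{7and9cycles} and~\ref{9cycles} that neither $ac_1$ nor $bc_2$ lies in a cell and that $x_1ac_1c_2bx_2$ lies in no $9$-cycle, from which the absence of short cycles in $G'$ follows directly. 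Also note that your degenerate case $\{a_1,a_2\}\cap\{b_1,b_2\}\neq\emptyset$ cannot occur once $C$ is in hand (girth forbids it), and the parity argument you give there applies Lemma~\ref{par-str} to $G'$, which is only licensed if $G'$ itself is $C_7$-critical; you do not know that at that stage.
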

\begin{proof}
Suppose not. Let $S_a = aa_1a_2v$ be the 2-string shared by $a$ and $v$, and let $S_b = bb_1b_2v$ be the 2-string shared by $b$ and $v$. Let $S$ be the 3-string incident with $v$. By Lemma \ref{3str} applied to $v$, since $v$ is not contained in a cell there exists a 9-cycle $C = S_a\cup S_b \cup bc_2c_1a$ in $G$. Let $x_1 \in N(a) \setminus V(C)$, and let $x_2 \in N(b) \setminus V(C)$.

Note since $C$ is a 9-cycle, by Lemma \ref{9cycles} the path $x_1ac_1c_2bx_2$ is not contained in a 9-cycle. Furthermore, since cells and 9-cycles are edge-disjoint by Lemma \ref{7and9cycles}, $ac_1$ and $c_2b$ are each not contained in a cell. Let $G'$ be the graph obtained from $G$ by identifying $x_i$ and $c_i$ to a new vertex $z_i$, for each $i \in \{1,2\}$. Note that $G'$ does not contain a triangle or 5-cycle. Furthermore, if $G'$ admits a homomorphism $\phi$ to $C_7$, then $\phi$ extends to $G$ by setting $\phi(x_1) = \phi(c_1) = \phi(z_1)$ and $\phi(x_2) = \phi(c_2) = \phi(z_2)$, contradicting the fact that $G$ is $C_7$-critical. Thus $G'$ does not admit a homomorphism to $C_7$. Furthermore, $G'$ is not $C_7$-critical, as $v$ is a vertex of degree 3 and weight 9 in $G'$, contradicting Lemma \ref{weight}. Thus $G'$ contains a proper $C_7$-critical subgraph $G''$.  Note since $G''$ does not contain a vertex of degree 3 with weight at least 9, at least one edge in one of the strings $S^*$ incident with $v$ is not contained in $G''$. Since $G''$ has minimum degree 2 by Lemma \ref{cut}, we have $E(S^*) \cap E(G'') = \emptyset$. Let $S'$ and $S''$ be the strings in $\{S_a, S_b, S\} \setminus \{S^*\}$. Since $S' \cup S''$ is a $k$-string with $k \geq 5$, at least one of the edges in $S' \cup S''$ is not contained in $E(G'')$. Since $G''$ has minimum degree 2, it follows that $E(S' \cup S'') \cap E(G'') = \emptyset$. In particular, it follows that $v \not \in V(G'')$.

Since $G'' \not \subset G$, it follows that $G''$ contains at least one of $z_1$ and $z_2$. Furthermore, since $G''$ is not a triangle or a 5-cycle and $v(G'') < v(G)$, we have $p(G'') \leq 2$. 

Suppose first exactly one of $\{z_1, z_2\}$ is contained in $V(G'')$. Without loss of generality, we may assume $z_1 \in V(G'')$. Let $F$ be the graph obtained from $G''$ by splitting $z_1$ back into $c_1$ and $x_1$, and adding the path $x_1ac_1$. Then $p(F) = p(G'') + 17(2)-15(2) \leq 6$. Since $F \neq G$, this contradicts Lemma \ref{main}.

Thus we may assume both of $\{z_1, z_2\}$ are contained in $V(G'')$. Let $F$ be the graph obtained from $G''$ by splitting $z_1$ back into $c_1$ and $x_1$, splitting $z_2$ back into $c_2$ and $x_2$, and adding the paths $x_1ac_1$ and $x_2bc_2$. Then $p(F) = p(G'') + 17(4)-15(4) \leq 10$. By Lemma \ref{potentials7}, either $F = G$ or $G \in P_5(H)$. But since $v \not \in V(F)$ is a vertex of degree 3, this is a contradiction.
\end{proof}

%--------------------------------------------- (2, 2, 2)s
\begin{lemma}\label{2,2,2s}
Let $v \in V(G)$ be a vertex of type (2,2,2) not contained in a cell. If $a, b,$ and $c$ are the vertices that share a 2-string with $v$ and $\deg(a)=\deg(b)=\deg(c)=3$, then at least one of $a, b$ and $c$ is contained in a cell.
\end{lemma}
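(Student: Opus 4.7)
The plan is to adapt the proof of Lemma~\ref{3,2,2s} to the type $(2,2,2)$ setting. I would first assume for contradiction that none of $a, b, c$ is contained in a cell, and write the three $2$-strings through $v$ as $S_a = v a_1 a_2 a$, $S_b = v b_1 b_2 b$, and $S_c = v c_1 c_2 c$. For each $\alpha \in \{a, b, c\}$, let $x_\alpha$ and $y_\alpha$ be the two neighbours of $\alpha$ other than $\alpha_2$; by Lemma~\ref{girth} these are distinct and share no common neighbour apart from $\alpha$.

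Next I would construct $G'$ from $G$ by simultaneously identifying $x_\alpha$ with $y_\alpha$ into a new vertex $z_\alpha$, for each $\alpha \in \{a,b,c\}$. Each of $a, b, c$ then has degree $2$ in $G'$, so each of the three $2$-strings at $v$ extends to a $3$-string in $G'$, giving $v$ type $(3,3,3)$ and weight $9$. The crucial step is to verify that $G'$ contains neither a triangle nor a $5$-cycle. For a short cycle through a single $z_\alpha$, unwinding the identification produces a short $(x_\alpha, y_\alpha)$-walk in $G$ which, combined with the $2$-path $x_\alpha \alpha y_\alpha$ through $\alpha$, yields a cycle in $G$ of length at most $7$; this is either impossible by Lemma~\ref{girth} or forces $\alpha$ into a cell, contradicting the hypothesis. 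For a short cycle crossing two or more of the $z_\alpha$, any edge in $G$ between $\{x_\alpha, y_\alpha\}$ and $\{x_\beta, y_\beta\}$, combined with the two $2$-strings $S_\alpha$ and $S_\beta$ at $v$, would produce a $7$- or $9$-cycle in $G$ meeting $\alpha$ or $\beta$ in a way forbidden by Lemmas~\ref{7cycles}, \ref{7and9cycles}, and \ref{9cycles}. This cross-identification case analysis, together with handling potential coincidences among the six vertices $x_a, y_a, x_b, y_b, x_c, y_c$, is the main obstacle.

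Once $G'$ is shown to contain no triangle or $5$-cycle, any homomorphism $G' \to C_7$ would extend to a homomorphism $G \to C_7$ by setting $\phi(x_\alpha) = \phi(y_\alpha) = \phi(z_\alpha)$, contradicting the $C_7$-criticality of $G$. Hence $G'$ contains a proper $C_7$-critical subgraph $G''$; since $G'' \notin \{C_3, C_5\}$ and $v(G'') < v(G)$, the minimality of $G$ yields $p(G'') \leq 2$. Since $v$ has weight $9$ in $G'$, exceeding the bound of $8$ from Lemma~\ref{weight}, we have $v \notin V(G'')$; and by Lemma~\ref{cut} (minimum degree $2$) together with Lemma~\ref{max-str}, a cascade along each extended $3$-string forces all internal vertices of $S_a, S_b, S_c$ (in particular $a, b, c$) to be absent from $V(G'')$. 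Since $G$ is $C_7$-critical, $G''$ cannot be a subgraph of $G$, so at least one of $z_a, z_b, z_c$ lies in $V(G'')$; let $Z$ be this intersection and write $J = |Z| \in \{1, 2, 3\}$.

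Finally, I would form $F \subseteq G$ from $G''$ by splitting each $z \in Z$ back into its two parents in $G$ and adding the $2$-path $x_\alpha \alpha y_\alpha$ for each corresponding $\alpha$. Each split--and--add contributes $2$ new vertices and $2$ new edges, so
\[
    p(F) = p(G'') + 4J \leq 2 + 4J \leq 14.
\]
Since $v \notin V(F)$, $F \neq G$; since $v(G'') \geq 7$ (as $G''$ is $C_7$-critical and distinct from $C_3, C_5$ and so contains an odd cycle of length at least $7$), $v(F) \geq 9$ and hence $F \neq C_7$; and since $\{v, a_1, a_2, b_1, b_2, c_1, c_2\} \subseteq V(G) \setminus V(F)$, we have $v(G) - v(F) \geq 7 > 4$, ruling out $G \in P_k(F)$ for each $k \in \{3, 4, 5\}$. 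Thus case (vi) of Lemma~\ref{potentials7} applies, forcing $p(F) \geq 15$ and contradicting $p(F) \leq 14$.
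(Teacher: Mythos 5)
Your overall strategy matches the paper's: the same identification construction $G' = G/\{x_\alpha \sim y_\alpha\}$, the same cascade argument using Lemmas~\ref{weight}, \ref{cut}, and \ref{max-str} to expel $v$ and the three $2$-strings from $G''$, and essentially the same closing potential argument to contradict Lemma~\ref{potentials7}.

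The genuine gap is in the step you flag as ``the main obstacle'': showing $G'$ has no triangle or $5$-cycle. Your proposed method for the cross-identification cases does not go through. Consider a triangle $z_a z_b z_c$ in $G'$ using all three new vertices. Unwinding it may give, in $G$, three edges $x_a x_b$, $y_b x_c$, $y_c y_a$, which together with the paths $x_\alpha \alpha y_\alpha$ form a $9$-cycle through $a$, $b$, $c$. A single $9$-cycle is not forbidden by Lemmas~\ref{7cycles}, \ref{7and9cycles}, or \ref{9cycles} — those lemmas only constrain the \emph{intersections} of such cycles, and here there is no second cycle to intersect it with. The same issue appears for a $5$-cycle through two or three $z_\alpha$'s, which unwinds to a $9$- or $11$-cycle. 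The paper's Claim~1 handles exactly these residual cases not by cycle-intersection lemmas but by computing the potential of the subgraph $F = C \cup S_a \cup S_b \cup S_c$ (respectively $C \cup S_a \cup S_b$), where $C$ is the unwound $9$- or $11$-cycle; these potentials come out to $2$, $13$, and $6$, each contradicting Lemma~\ref{potentials7}. Moreover, in the $11$-cycle sub-case even the potential bound alone is not enough: the paper must additionally identify two internal vertices of the strings at $v$, produce a cell of weight $9$, and argue the resulting critical subgraph would live inside a theta graph, which is impossible. None of this is present in your proposal, and it is not a routine fill-in — it is the core of the argument. Until this claim is established, the assertion $p(G'') \leq 2$ (and hence everything downstream) is unjustified.

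Two smaller points. First, when you assert $v \notin V(G'')$ ``since $v$ has weight $9$ in $G'$, exceeding the bound from Lemma~\ref{weight}'': the weight bound applies to vertices \emph{of $G''$}, not of $G'$, so you should first argue one string is missing from $G''$ (otherwise $v$ would have weight $9$ in $G''$), then cascade via Lemma~\ref{max-str} as you do afterwards; your phrasing inverts the order. Second, the inequality $p(F) = p(G'') + 4J$ should be $p(F) \leq p(G'') + 4J$: as the paper notes, the six parent vertices $x_a, y_a, x_b, y_b, x_c, y_c$ need not all be distinct, so the split-and-add may contribute fewer than $2J$ new vertices. Your final conclusion is unaffected since only the upper bound is used.
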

\begin{proof}

Suppose not. Let $S_a$, $S_b$, and $S_c$ be the 2-strings shared by $v$ with $a$, $b$, and $c$, respectively. Let $N(a)\setminus V(S_a) = \{a_1, a_2\}$, let $N(b)\setminus V(S_b) = \{b_1, b_2\}$, and similarly let $N(c)\setminus V(S_c) = \{c_1, c_2\}$. Note first $a$, $b$, and $c$ are all distinct vertices, since $G$ does not contain a 6-cycle by Lemma \ref{girth}. Furthermore, $\{a_1, a_2\} \cap \{b, c\} = \emptyset$ since $v$ is not contained in a cell. Similarly, $\{b_1, b_2\} \cap \{a,c\} = \{c_1, c_2\} \cap \{a, b\} = \emptyset$.  

Let $G'$ be the graph obtained from $G$ by identifying $a_1$ and $a_2$ to a new vertex $z_a$; identifying $b_1$ and $b_2$ to a new vertex $z_b$; and identifying $c_1$ and $c_2$ to a new vertex $z_c$. If $G'$ admits a homomorphism $\phi$ to $C_7$, then $\phi$ extends to $G$ by setting $\phi(x_1) = \phi(x_2) = \phi(z_x)$ for each $x \in \{a,b,c\}$, contradicting the fact that $G$ is $C_7$-critical. Therefore $G'$ contains a $C_7$-critical subgraph $G''$. Note by Lemma \ref{weight}, at least one vertex in $V(S_a) \cup V(S_b) \cup V(S_c)$ is not in $V(G'')$ as otherwise $v$ has weight nine in $G''$, contradicting Lemma \ref{weight}. Without loss of generality, suppose there is a vertex in $V(S_a)$ not contained in $V(G'')$. Since $G''$ is $C_7$-critical, by Lemma \ref{cut} $G''$ has minimum degree 2, and so $(V(S_a) \setminus \{v\}) \cap V(G'') = \emptyset$. Suppose $S_a \cup S_b$ is contained in $G''$. Since $E(S_a) \cap E(G'') = \emptyset$, it follows that $S_a \cup S_b$ is a 7-string in $G''$, contradicting Lemma \ref{max-str}. Thus at least one edge in $E(S_a \cup S_b)$ is not contained in $E(G'')$. Since $G''$ has minimum degree 2 by \ref{cut}, it follows that $E(S_a \cup S_b) \cap E(G'') = \emptyset$, and furthermore that $V(G'') \cap V(S_a \cup S_b \cup S_c) = \emptyset$.

\noindent{
\textbf{Claim 1.} \emph{$G''$ is neither a triangle nor a 5-cycle.}}
\begin{proof}

Suppose not, and suppose first $G''$ is a triangle. Since neither $a$, $b$, nor $c$ is contained in a 5-cycle in $G$ by Lemma \ref{girth}, $G''$ we have that $|\{z_a, z_b, z_c\} \cap V(G'') | \geq 2$. Since neither $a$, $b$, nor $c$ is contained in a cell, it follows that $|\{z_a, z_b, z_c\} \cap V(G'')| = 3$ and that the paths $a_1aa_2$, $b_1bb_2$, and $c_1cc_2$ are contained in a 9-cycle $C$. Let $F = S_a \cup S_b \cup S_c \cup C$. The potential of $F$ is given by
\begin{align*}
p(F) &= p(C) + p(S_a) + p(S_b) + p(S_c) - p(a) - p(b) - p(c) - 2p(v) \\
&=  18+3(23)-3(17)-2(17) \\
&= 2.
\end{align*}  This contradicts Lemma \ref{potentials7}. 

Suppose next $G''$ is a 5-cycle. Since neither $a$, $b$, nor $c$ is contained in a cell, $|\{z_a, z_b, z_c\} \cap V(G'') | \geq 2$. First suppose that $|\{z_a, z_b, z_c\} \cap V(G'') | = 2$, and so that two of the paths $a_1aa_2$, $b_1bb_2$, and $c_1cc_2$ are contained in a 9-cycle $C$. Without loss of generality, assume the paths are $a_1aa_2$ and $b_1bb_2$. Let $F$ be the graph formed by $C \cup S_a \cup S_b$. The potential of $F$ is given by 
\begin{align*}
p(F) &= p(C) + p(S_a) + p(S_b) -p(a) -p(b)-p(v) \\
&= 18+ 2(23) - 3(17) \\
&= 13.
\end{align*}
By Lemma \ref{potentials7}, we have $F = G$ or $G \in P_5(F) \cup P_4(F)$. But this is a contradiction, since $S_c$ is a 2-string and $S_c \not \subset F$.

We may therefore assume that $|\{z_a, z_b, z_c\} \cap V(G'') | = 3$, and so that the paths $a_1aa_2$, $b_1bb_2$ and $c_1cc_2$ are contained in an 11-cycle $C$. Let $F = C \cup S_a \cup S_b \cup S_c$. The potential of $F$ is given by 
\begin{align*}
p(F) &= p(C) + p(S_a) + p(S_b) + p(S_c) - p(a) - p(b) - p(c) - 2p(v) \\
&= 22 + 3(23) - 17(5) \\
&= 6.
\end{align*} 
By Lemma \ref{potentials7}, $F = G$. Let $P_{ab}$ be the $(a,b)$-path in $C$ that does not contain $c$. Similarly, let $P_{bc}$ and $P_{ac}$ be the $(b,c)$- and $(a,c)$-paths along $C$ that do not contain $a$ and $b$, respectively. Note since neither $a$, $b$, nor $c$ is contained in a cell by assumption, neither $P_{ab}$, $P_{bc}$, nor $P_{ac}$ is a 4-string by Lemma \ref{4str}. Since together the three paths form an 11-cycle, we may assume without loss of generality that each of $P_{ab}$ and $P_{bc}$ is a 3-string, and that $P_{ac}$ is a 2-string. Note $S_a \cup S_c \cup P_{ac}$ forms a 9-cycle $C'$. Let $a'$ and $c'$ be $v$'s neighbours in $S_a$ and $S_c$, respectively. Let $F'$ be the graph obtained from $G$ by identifying $a'$ and $c'$. Note $F'$ does not admit a homomorphism to $C_7$ as such a homomorphism extends to $G$. Thus $F'$ contains a $C_7$-critical subgraph $F''$.  Note $C'$ is a cell of weight 9 in $F''$, and so $F''$ does not contain at least one string in or incident with $C'$. But then $F''$ is a subgraph of a theta graph. Note no theta graph is $C_7$-critical; since $F''$ has girth at least 7, it follows that $F''$ is not $C_7$-critical.
\end{proof}

By Claim 1, $G'' \not \in \{C_3, C_5\}$, and since $G$ is a minimum counterexample and $v(G'') < V(G)$, it follows that $p(G'') \leq 2$. Since $G'' \not \subset G$, at least one of $\{z_a, z_b, z_c\}$ is contained in $V(G'')$. Let $I = \{x : z_x \in V(G'')\}$. Let $F$ be the graph obtained from $G''$ by splitting $z_x$ back into $x_1$ and $x_2$ and adding the path $x_1xx_2$, for each $i \in I$. Let $k = |I| \leq 3$. We have $p(F) \leq p(G'') + 17(2k) - 15(2k)$. Note this does not necessarily hold with equality, since $\{x_1, x_2\}$ and $\{y_1, y_2\}$ are not necessarily disjoint for $x \neq y$, $\{x, y\} \subseteq I$. Simplifying, $p(F) \leq p(G'') + 2(2k) \leq 14$. By Lemma \ref{potentials7}, either $F = G$ or $G \in P_5(F) \cup P_4(F) \cup P_3(F)$.  But since $v \not \in V(F)$ and $\deg(v) = 3$, this is a contradiction. 
\end{proof}

% ---------------------------------------- (3, 3, 0)s not adj
\begin{lemma}\label{(3,3,0)}
Let $v \in V(G)$ be a vertex of type (3,3,0) that is not contained in a cell. Let $S$ be the 0-string incident with $v$, and let $u \neq v$ be an endpoint of $S$. If $u$ is not contained in a cell, $u$ is not of type (3,3,0).
\end{lemma}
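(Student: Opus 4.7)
Suppose for contradiction that $u$ is of type $(3,3,0)$ and not contained in a cell. Label the $3$-strings at $v$ as $S_a = v a a_1 a_2 \alpha$ and $S_b = v b b_1 b_2 \beta$, and those at $u$ as $S_c = u c c_1 c_2 \gamma$ and $S_d = u d d_1 d_2 \delta$. By Lemma \ref{par-str} applied at $v$ and at $u$, we have $\alpha \neq \beta$ and $\gamma \neq \delta$; combined with Lemma \ref{girth} and the hypothesis that neither $v$ nor $u$ is in a cell, $\alpha, \beta, \gamma, \delta$ are pairwise distinct and distinct from the internal vertices of the strings as well as from $v$ and $u$.

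The plan is to proceed by an identification argument in the spirit of Lemma \ref{deg3cells} and Lemma \ref{2,2,2s}. I would form $G'$ from $G$ by identifying $\alpha$ with $\beta$ to a new vertex $z_v$ and identifying $\gamma$ with $\delta$ to a new vertex $z_u$. Any homomorphism $\phi \colon G' \to C_7$ would lift to $G$ by setting $\phi(\alpha) = \phi(\beta) = \phi(z_v)$ and $\phi(\gamma) = \phi(\delta) = \phi(z_u)$, contradicting the $C_7$-criticality of $G$; therefore $G'$ admits no homomorphism to $C_7$ and contains a $C_7$-critical subgraph $G''$. I would next verify that $G'$ contains no triangle and no $5$-cycle, so that $G'' \notin \{C_3, C_5\}$. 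Any such short odd cycle must pass through $z_v$ or $z_u$, and corresponds to a path in $G$ of length $1$, $3$, or $5$ between $\alpha$ and $\beta$ (respectively between $\gamma$ and $\delta$). Combined with the $4$-edge strings $S_a, S_b$ (respectively $S_c, S_d$), such a short path produces a cycle through $v$ (respectively $u$) of length $9$, $11$, or $13$. Using the assumption that $v$ and $u$ are not in cells, together with Lemmas \ref{7cycles}, \ref{7and9cycles}, and \ref{9cycles}, each such configuration either forces a short cycle contradicting Lemma \ref{girth} or forces a cell through $v$ or $u$, yielding a contradiction in every case.

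By the minimality of $G$ and integrality of potential, we then have $p(G'') \leq 2$. I would reconstruct a subgraph $F \subseteq G$ by splitting $z_v$ back into $\alpha$ and $\beta$ (adding the edges from $\alpha$ and $\beta$ to their $G$-neighbors that lie in $V(G'')$) and analogously for $z_u$. Then $v(F) = v(G'') + 2$, and $e(F)$ differs from $e(G'')$ by an amount controlled by $\deg_{G''}(z_v)$ and $\deg_{G''}(z_u)$; a case analysis based on which of $z_v, z_u$ lie in $V(G'')$ and on their degrees there yields $p(F)$ small enough to contradict Lemma \ref{potentials7}. The main obstacle is the case analysis ruling out triangles and $5$-cycles in $G'$: short $(\alpha,\beta)$-paths in $G$ only produce $9$- to $13$-cycles through $v$ rather than cells directly, so the contradictions require careful combination of Lemmas \ref{7cycles}, \ref{7and9cycles}, and \ref{9cycles} with the symmetric structure at $u$, and one must handle separately the degenerate situation in which an identification creates a cell through the symmetric side.
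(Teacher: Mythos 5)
Your proposal has two genuine gaps, and the second one is fatal to the approach.

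First, the assertion that $\alpha, \beta, \gamma, \delta$ are pairwise distinct does not follow from Lemma \ref{girth} and the no-cell hypotheses. Consider $\alpha = \gamma$: then $S_a \cup uv \cup S_c$ is a $9$-cycle, which neither violates the girth bound nor places $v$ or $u$ in a cell. In fact the paper's proof is organized around the opposite observation: it first shows that under the hypotheses of the lemma the four endpoints \emph{cannot} all be distinct (this requires a delicate counting argument inside $C_7$ using Lemma \ref{vartheta}; it is Claim~1 in the paper's proof, followed by a potential argument using a replacement-by-4-string gadget), and then handles the residual cases $a = c,\ b = d$ and $a = c,\ b \neq d$ separately. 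Your construction silently assumes away the coincidence cases, which are precisely the ones that carry most of the work.

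Second, and more seriously, your argument to exclude triangles and $5$-cycles in $G'$ does not hold up. A triangle in $G'$ through $z_v$ alone corresponds (generically) to a path of length $3$ from $\alpha$ to $\beta$ in $G$; together with $S_a$ and $S_b$ this produces an $11$-cycle through $v$. An $11$-cycle is not a short cycle and is not a cell, and Lemmas \ref{7cycles}, \ref{7and9cycles}, and \ref{9cycles} say nothing about $11$-cycles. The same issue arises for $5$-cycles in $G'$, which can correspond to $11$- or $13$-cycles through $v$ or $u$, and none of the cited lemmas forbid these. Note also that Lemma \ref{3str} is not applicable to a $(3,3,0)$-vertex (it requires both of the other strings to have at least two edges, while the $0$-string has only one), so the tool that would ordinarily place a $9$-cycle through a degree-three vertex is unavailable here. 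Without a valid mechanism to rule out short odd cycles in $G'$, you cannot conclude $G'' \notin \{C_3, C_5\}$, and the potential bound $p(G'') \le 2$ does not follow. The final potential computation is also left as an unexamined ``case analysis,'' but since the earlier steps fail, this does not reach the point of being checkable.

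For contrast, the paper sidesteps both problems by never forming $G'$ via identification of the far endpoints. Instead it deletes the string interiors and, after pinning down the admissible images of $a, b, c, d$ under any homomorphism via an explicit $C_7$-neighborhood computation, it glues in auxiliary $4$-strings between selected endpoints. A cycle through such a $4$-string automatically has length at least $6$, so no new triangle or $5$-cycle can appear, and the potential accounting is clean.
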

\begin{proof}
Suppose not. Let $S_a$ and $S_b$ be the 3-strings incident with $v$, and let $S_c$ and $S_d$ be the 3-strings incident with $u$. Let $a \neq v$ and $b \neq v$ be endpoints of $S_a$ and $S_b$, respectively.  Let $c \neq u$ and $d \neq u$ be endpoints of $S_c$ and $S_d$, respectively. Note $a \neq b$ and $c \neq d$ by Lemma \ref{par-str}.  

In this proof, we consider all numerical indices to be taken modulo 7. We aim to show $\{a,b\} \cap \{c,d\} \neq \emptyset$. To see this, suppose not. Let $\phi$ be a homomorphism from $G \setminus (V(S_a \cup S_b \cup S_c \cup S_d) \setminus \{a,b,c,d\})$ to $C=c_1c_2\cdots c_7c_1$. Let $I_1 = B_\phi(v|a, S_a) \cap B_\phi(v|b, S_b)$. Let $I_2 = B_\phi(u |c, S_c) \cap B_\phi(u |d, S_d)$. Note since $G$ is $C_7$-critical, we have $N_{C}(I_1) \cap I_2 = \emptyset$; otherwise, $\phi$ extends to $G$, a contradiction. First, we will show the following:

\noindent{
\textbf{Claim 1.} \emph{Given $\phi$ as described, $\phi(a) \in \{\phi(c), \phi(d)\}$}.}

\begin{proof}
By Lemma \ref{vartheta}, since each of $S_a, S_b, S_c,$ and $S_d$ is a 3-string, each of $|B_\phi(v |a, S_a)|$, $|B_\phi(v |b, S_b)|$,  $|B_\phi(u |c, S_c)|$, and $|B_\phi(u |d, S_d)|$ is at least 5.  Note since $I_1$ and $I_2$ are each the intersection of two subsets of $V(C)$ of size at least 5, it follows that $|I_1| \geq 3$ and $|I_2| \geq 3$. Suppose $|I_1| \geq 4$. Then $|N_{C}(I_1)| \geq 5$ by Lemma \ref{nbrs}, and so $N_{C}(I_1) \cap I_2 \neq \emptyset$. But then $\phi$ extends to $G$, a contradiction. Therefore $I_1$ (and symmetrically, $I_2$) is a set of size 3. 

We may assume without loss of generality that $\phi(a) = c_1$. Then $B_\phi(v |a, S_a) = V(C)\setminus \{c_{7}, c_{2}\}$. Let $\phi(b) = c_j$, with $j \in \{1, \dots, 7\}$. In order to have $|I_2|  = |B_\phi(v | b, S_b) \cap B_\phi(v | a, S_a)| = 3$, we therefore have $\{c_{7}, c_{2}\} \cap \{c_{j-1}, c_{j+1}\} = \emptyset$. Thus $j \in \{4, 5, 2, 7\}$. Suppose $j = 2$. Then $I_1 = \{c_4,c_5,c_6\}$. But then $|N_{C_7}(I_1)| = 5$, and so since $N_{C}(I_1) \cap I_2 \neq \emptyset$, $\phi$ extends to $G$, a contradiction.  The same is symmetrically true if $j = 7$. We may therefore assume $j \in \{4,5\}$.  Without loss of generality, we will take $j = 4$, as the $j = 5$ case corresponds to simply renaming the vertices along the cycle $C$ in the opposite orientation.  Similarly, if $\phi(c) = c_k$, then $\phi(d) \in \{c_{k+3}, c_{k+4}\}$. Note since $\phi(a) = c_1$ and $\phi(b) = c_4$, we have $I_1 = \{c_{1}, c_{4}, c_{6}\}$.  Thus $N_C(I_1) = \{c_7, c_2, c_3, c_5\}$. Since $\phi$ does not extend to $G$, we have that $I_2 = \{c_1, c_4, c_6\}$, and so without loss of generality, $\phi(c) = c_1$ and $\phi(d) = c_4$. Since $\phi(a) = \phi(c)$, this is a contradiction. 
\end{proof}
%END

Let $G'$ be the graph obtained from $G \setminus (V(S_a \cup S_b \cup S_c \cup S_d) \setminus \{a,b,c,d\})$ by adding a 4-string $S_{ac}$ with endpoints $a$ and $c$, and a 4-string $S_{ad}$ with endpoints $a$ and $d$.  Note by assumption $a \neq c$ and $a \neq d$. It follows that $G'$ does not contain a cycle of length three or five, since a cycle containing either $S_{ad}$ or $S_{ac}$ has length at least six. Note if $G'$ admits a homomorphism $\phi$ to $C$, then $\phi$ extends to $G$ by Claim 1, since $\phi(a) \not \in \{\phi(d), \phi(c)\}$.  Thus $G'$ is not homomorphic to $C_7$, and so it contains a $C_7$-critical subgraph $G''$. Since $G''\not \in \{C_3, C_5\}$, since $v(G'') <v(G)$ and since $G$ is a minimum counterexample, it follows that $p(G'') \leq 2$. Note since $G''$ has minimum degree two and $G'' \not \subset G$, we have that $G''$ contains at least one of $S_{ac}$ and $S_{ad}$.  

Suppose first $G''$ contains exactly one of $S_{ac}$ and $S_{ad}$; without loss of generality, assume $S_{ac} \subset G''$. Let $F$ be the graph obtained from $G''$ by deleting $S_{ac}$ and adding $S_auvS_c$. Then $F \subset G$, and $p(F) = p(G'') + 17(4)-15(4) \leq 10$. By Lemma \ref{potentials7}, either $F = G$ or $G \in P_5(F)$. But since $S_d \not \subset F$ and $S_b \not \subset F$, this is a contradiction. 

We may therefore assume $G''$ contains both $S_{ac}$ and $S_{ad}$. Let $F$ be the graph obtained from $G''$ by deleting $S_{ac}$ and $S_{ad}$, and adding in $S_a$, $S_c$, $S_d$, and the edge $uv$. Since this is a net addition of 3 vertices and 3 edges, we have $p(F) = p(G'') + 17(3) - 15(3) \leq 8$. By Lemma \ref{potentials7}, $F = G$. But since $S_b \not \subset F$, this is a contradiction. 

Thus we may assume $a \in \{c, d\}$. Without loss of generality, assume $a=c$. We now break into two cases depending on whether or not $b = d$.

\noindent{
\emph{Case 1: $b=d$.}} Let $G_1 = G\setminus (V(S_a \cup S_b \cup S_c \cup S_d) \setminus \{a, b\})$. Note since $G$ is $C_7$-critical and $G_1 \subset G$, $G_1$ admits a homomorphism $\phi$ to $C_7 = c_1...c_7c_1$, with $\phi(a) = c_1$ and $\phi(b) \in \{c_1, c_2, c_3, c_4\}$. Note $\phi(b) = c_4$, as otherwise $\phi$ extends to $G$, a contradiction. To see this, see Figure \ref{fig:330s}. 

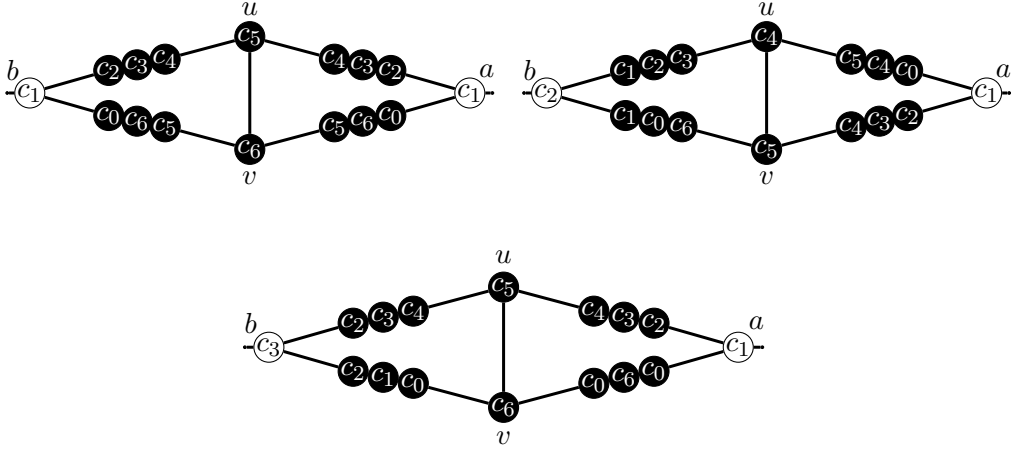
\begin{figure}
	\begin{center}
        
            \begin{tikzpicture}[scale=0.75]	
			\tikzset{black node/.style={shape=circle,draw=black,fill=black,inner sep=0pt, minimum size=11pt}}
            \tikzset{white node/.style={shape=circle,draw=black,fill=white,inner sep=0pt, minimum size=11pt}}
			\tikzset{invisible node/.style={shape=circle,draw=black,fill=black,inner sep=0pt, minimum size=1pt}}
				\tikzset{edge/.style={black, line width=.4mm}}		

                \node[black node] (1) at (0,-1){\color{white}$c_6$};
                \node[black node] (2) at (0,1){\color{white}$c_5$};
                \node[white node] (3) at (3.9,0){$c_1$};                
                \node[white node] (4) at (-3.9,0){$c_1$};                
                \node[black node] (5) at (1.5,0.6){\color{white}$c_4$};                
                \node[black node] (6) at (2,0.5){\color{white}$c_3$};                
                \node[black node] (7) at (2.5,0.4){\color{white}$c_2$};                
                \node[black node] (8) at (1.5,-0.6){\color{white}$c_5$};                
                \node[black node] (9) at (2,-0.5){\color{white}$c_6$};                
                \node[black node] (10) at (2.5,-0.4){\color{white}$c_7$};                
                \node[black node] (11) at (-1.5,0.6){\color{white}$c_4$};                
                \node[black node] (12) at (-2,0.5){\color{white}$c_3$};                
                \node[black node] (13) at (-2.5,0.4){\color{white}$c_2$};                
                \node[black node] (14) at (-1.5,-0.6){\color{white}$c_5$};                
                \node[black node] (15) at (-2,-0.5){\color{white}$c_6$};                
                \node[black node] (16) at (-2.5,-0.4){\color{white}$c_7$};  
                \node[invisible node] (17) at (4.3, 0){};
                \node[invisible node] (18) at (-4.3, 0){};

                \draw[edge] (1)--(2);
                \draw[edge] (1)--(8);
                \draw[edge] (1)--(14);
                \draw[edge] (2)--(5);
                \draw[edge] (2)--(11);
                \draw[edge] (5)--(6);
                \draw[edge] (6)--(7);
                \draw[edge] (7)--(3);
                \draw[edge] (3)--(10);
                \draw[edge] (10)--(9);
                \draw[edge] (9)--(8);
                \draw[edge] (11)--(12);
                \draw[edge] (12)--(13);
                \draw[edge] (13)--(4); 
                \draw[edge] (4)--(16);
                \draw[edge] (16)--(15);
                \draw[edge] (15)--(14); 
                \draw[edge] (3)--(17);
                \draw[edge] (4)--(18);
                \node[] at (0,1.5) {$u$};
                \node[] at (0, -1.5) {$v$};
                \node[] at (4.2,0.4) {$a$};
                \node[] at (-4.2,0.4) {$b$};   
			\end{tikzpicture}
            %\hskip 8mm
            %\vskip 6mm
            \begin{tikzpicture}[scale=0.75]	
			\tikzset{black node/.style={shape=circle,draw=black,fill=black,inner sep=0pt, minimum size=11pt}}
            \tikzset{white node/.style={shape=circle,draw=black,fill=white,inner sep=0pt, minimum size=11pt}}
			\tikzset{invisible node/.style={shape=circle,draw=black,fill=black,inner sep=0pt, minimum size=1pt}}
				\tikzset{edge/.style={black, line width=.4mm}}		

                \node[black node] (1) at (0,-1){\color{white}$c_5$};
                \node[black node] (2) at (0,1){\color{white}$c_4$};
                \node[white node] (3) at (3.9,0){$c_1$};                
                \node[white node] (4) at (-3.9,0){$c_2$};                
                \node[black node] (5) at (1.5,0.6){\color{white}$c_5$};                
                \node[black node] (6) at (2,0.5){\color{white}$c_6$};                
                \node[black node] (7) at (2.5,0.4){\color{white}$c_7$};                
                \node[black node] (8) at (1.5,-0.6){\color{white}$c_4$};                
                \node[black node] (9) at (2,-0.5){\color{white}$c_3$};                
                \node[black node] (10) at (2.5,-0.4){\color{white}$c_2$};                
                \node[black node] (11) at (-1.5,0.6){\color{white}$c_3$};                
                \node[black node] (12) at (-2,0.5){\color{white}$c_2$};                
                \node[black node] (13) at (-2.5,0.4){\color{white}$c_1$};                
                \node[black node] (14) at (-1.5,-0.6){\color{white}$c_6$};                
                \node[black node] (15) at (-2,-0.5){\color{white}$c_7$};                
                \node[black node] (16) at (-2.5,-0.4){\color{white}$c_1$};  
                \node[invisible node] (17) at (4.3, 0){};
                \node[invisible node] (18) at (-4.3, 0){};

                \draw[edge] (1)--(2);
                \draw[edge] (1)--(8);
                \draw[edge] (1)--(14);
                \draw[edge] (2)--(5);
                \draw[edge] (2)--(11);
                \draw[edge] (5)--(6);
                \draw[edge] (6)--(7);
                \draw[edge] (7)--(3);
                \draw[edge] (3)--(10);
                \draw[edge] (10)--(9);
                \draw[edge] (9)--(8);
                \draw[edge] (11)--(12);
                \draw[edge] (12)--(13);
                \draw[edge] (13)--(4); 
                \draw[edge] (4)--(16);
                \draw[edge] (16)--(15);
                \draw[edge] (15)--(14); 
                \draw[edge] (3)--(17);
                \draw[edge] (4)--(18);
                \node[] at (0,1.5) {$u$};
                \node[] at (0, -1.5) {$v$};
                \node[] at (4.2,0.4) {$a$};
                \node[] at (-4.2,0.4) {$b$};   
			\end{tikzpicture}
            %\hskip 8mm
            \vskip 6mm
                        \begin{tikzpicture}[scale=0.8]	
			\tikzset{black node/.style={shape=circle,draw=black,fill=black,inner sep=0pt, minimum size=11pt}}
            \tikzset{white node/.style={shape=circle,draw=black,fill=white,inner sep=0pt, minimum size=11pt}}
			\tikzset{invisible node/.style={shape=circle,draw=black,fill=black,inner sep=0pt, minimum size=1pt}}
				\tikzset{edge/.style={black, line width=.4mm}}		

                \node[black node] (1) at (0,-1){\color{white}$c_6$};
                \node[black node] (2) at (0,1){\color{white}$c_5$};
                \node[white node] (3) at (3.9,0){$c_1$};                
                \node[white node] (4) at (-3.9,0){$c_3$};                
                \node[black node] (5) at (1.5,0.6){\color{white}$c_4$};                
                \node[black node] (6) at (2,0.5){\color{white}$c_3$};                
                \node[black node] (7) at (2.5,0.4){\color{white}$c_2$};                
                \node[black node] (8) at (1.5,-0.6){\color{white}$c_7$};                
                \node[black node] (9) at (2,-0.5){\color{white}$c_6$};                
                \node[black node] (10) at (2.5,-0.4){\color{white}$c_7$};                
                \node[black node] (11) at (-1.5,0.6){\color{white}$c_4$};                
                \node[black node] (12) at (-2,0.5){\color{white}$c_3$};                
                \node[black node] (13) at (-2.5,0.4){\color{white}$c_2$};                
                \node[black node] (14) at (-1.5,-0.6){\color{white}$c_7$};                
                \node[black node] (15) at (-2,-0.5){\color{white}$c_1$};                
                \node[black node] (16) at (-2.5,-0.4){\color{white}$c_2$};  
                \node[invisible node] (17) at (4.3, 0){};
                \node[invisible node] (18) at (-4.3, 0){};

                \draw[edge] (1)--(2);
                \draw[edge] (1)--(8);
                \draw[edge] (1)--(14);
                \draw[edge] (2)--(5);
                \draw[edge] (2)--(11);
                \draw[edge] (5)--(6);
                \draw[edge] (6)--(7);
                \draw[edge] (7)--(3);
                \draw[edge] (3)--(10);
                \draw[edge] (10)--(9);
                \draw[edge] (9)--(8);
                \draw[edge] (11)--(12);
                \draw[edge] (12)--(13);
                \draw[edge] (13)--(4); 
                \draw[edge] (4)--(16);
                \draw[edge] (16)--(15);
                \draw[edge] (15)--(14); 
                \draw[edge] (3)--(17);
                \draw[edge] (4)--(18);
                \node[] at (0,1.5) {$u$};
                \node[] at (0, -1.5) {$v$};
                \node[] at (4.2,0.4) {$a$};
                \node[] at (-4.2,0.4) {$b$};   
			\end{tikzpicture}
            \hskip 8mm

    \end{center}
	\caption{Figure for Lemma \ref{(3,3,0)}. Extensions of $\phi$ to $G$. The white vertices are of unknown degree, though their degree is at least that shown. The black vertices' degrees are as illustrated.}
	\label{fig:330s}
\end{figure}

Let $G_2 \in P_2(G_1)$ be the graph obtained from $G_1$ by adding an $(a,b)$-path $P$ of length 2. Note if $G_2$ admits a homomorphism $\phi$ to $C_7$, then $\phi$ extends to $G$ since there does not exist a homomorphism $\phi': G_2 \rightarrow C_7$ with $\phi'(a) = c_1$ and $\phi'(b) = c_4$. Thus $G_2$ contains a $C_7$-critical subgraph $G_2'$. Suppose $G_2'$ is a triangle. Then $ab$ is an edge in $G$. But then $C = S_a \cup S_c \cup uv$ and $C' = S_b \cup S_a \cup ab$ are two 9-cycles that intersect in a 3-string $S_a$, contradicting Lemma \ref{potentials7}. Suppose next that $G_2'$ is a 5-cycle. Then there exists an $(a,b)$-path $Q$ of length 3 in $G$. Let $F = S_a\cup S_b \cup S_c \cup S_d \cup uv \cup Q$. Since $v(F) = 18$ and $e(F) = 20$, the potential of $F$ is given by $p(F) = 17(18) - 15(20) = 6$. By Lemma \ref{potentials7}, $F = G$. But since there exists a homomorphism $\phi$ of $Q$ to $C_7$ with $\phi(a) = c_1$ and $\phi(b) = c_2$, we have that $\phi$ extends to a homomorphism of $G$ to $C_7$, contradicting the fact that $G$ is $C_7$-critical.  Thus we may assume $G_2' \not \in \{C_3, C_5\}$, and since $v(G_2') < v(G)$ and $G$ is a minimum counterexample, it follows that $p(G_2') \leq 2$.  Note since $G_2' \not \subset G$ and $G_2'$ has minimum degree at least 2 by Lemma \ref{cut}, the path $P$ is contained in $G_2'$. Let $F$ be the graph obtained from $G_2'$ by deleting $V(P) \setminus \{a,b\}$ and adding $S_a \cup S_b$.  The potential of $F$ is given by $p(F) = p(G_2') + 17(6)-15(6) \leq 14$. By Lemma \ref{potentials7}, either $F = G$ or $G \in P_5(F) \cup P_4(F) \cup P_3(F)$. But since one of $u$ and $v$ is not contained in $F$ and $\deg(u) = \deg(v) = 3$, this is a contradiction. 

\noindent{\emph{Case 2: $b \neq d$.}} Let $G'$ be the graph obtained from $G \setminus (V(S_a \cup S_b \cup S_c \cup S_d) \setminus \{a,b,d\})$ by adding a 4-string $S_{bd}$ with endpoints $b$ and $d$.  Note since $b \neq d$, $G'$ does not contain a cycle of length 3 or 5, since a cycle containing $S_{bd}$ has length at least 6. 

Suppose $G'$ admits a homomorphism $\phi$ to $C_7$. Without loss of generality, we may assume $\phi(a) = c_1$ and $\phi(b) \in \{c_1, c_2, c_3, c_4\}$. Note $\phi(b) \neq \phi(d)$, since $B_\phi(b |d, S_{bd}) = V(C_7)- \phi(b)$. 

\noindent{\textbf{Claim 2.}\emph{ The homomorphism $\phi$ extends to $G$.}}

\begin{proof}
 Let $I_1 = B_\phi(v|a, S_a) \cap B_\phi(v|b, S_b)$ and $I_2 = B_\phi(u|a, S_c) \cap B_\phi(u|d, S_d)$. Note since $G$ is $C_7$ critical, it follows that $I_1 \cap N_{C_7}(I_2) = \emptyset$ as otherwise $\phi$ extends to $G$. Since each of $S_a, S_b, S_c,$ and $S_d$ is a 3-string, by Lemma \ref{vartheta} each of $|B_\phi(v |a, S_a)|$, $|B_\phi(v |b, S_b)|$,  $|B_\phi(u |a, S_c)|$, and $|B_\phi(u |d, S_d)|$ is at least 5.  Note since $I_1$ and $I_2$ are each the intersection of two subsets of $V(C_7)$ of size at least 5, it follows that $|I_1| \geq 3$ and $|I_2| \geq 3$. Suppose $|I_1| \geq 4$. Then $|N_{C_7}(I_1)| \geq 5$ by Lemma \ref{nbrs}, and so $N_{C_7}(I_1) \cap I_2 \neq \emptyset$. But then $\phi$ extends to $G$, a contradiction. Therefore $I_1$ (and symmetrically, $I_2$) is a set of size exactly 3. 

Since $\phi(a) = c_1$, it follows that $B_\phi(v | a, S_a) = \{c_1,c_3,c_4,c_5,c_6\}$. Since $\phi(b) \in \{c_1,c_2,c_3,c_4\}$ and $B_\phi(v|b, S_b) = \{c_i: i \in [7], i \neq \phi(b)\pm 1\}$, it follows that $\phi(b) \in \{c_2, c_4\}$. 

Suppose $\phi(b) =c_2$. Then we have
\begin{align*}
I_1 &= B_\phi(v|a, S_a) \cap B_\phi(v|b, S_b) \\
&= \{c_1,c_3,c_4,c_5,c_6\} \cap \{c_2, c_4, c_5, c_6, c_7\} \\
&= \{c_4, c_5, c_6\}.
\end{align*}
But then $|N_{C_7}(I_1)| = 5$, and so it follows that $I_2 \cap N_{C_7}(I_1) \neq \emptyset$ since $I_2$ is a set of size 3. This contradicts the fact that $\phi$ does not extend to $G$. Thus we may assume $\phi(b) = c_4$.

Similarly, we may assume $\phi(d) \neq c_2$, and symmetrically, $\phi(d) \neq c_7$. Thus $\phi(d) \in \{c_4, c_5\}$. Since $\phi(d) \neq \phi(b)$, we have that $\phi(d) = c_5$. But then $I_1 = \{c_1,c_4,c_6\}$ and so $N(I_1) = \{c_2,c_3,c_5,c_7\}$. Since $I_2 = \{c_1,c_3,c_5\}$, we have that $N(I_1) \cap I_2 \neq \emptyset$, and so that $\phi$ extends to $G$.
\end{proof}

Since $G$ is $C_7$-critical, Claim 2 is a contradiction and so we may assume that $G'$ does not admit a homomorphism $\phi$ to $C_7$. Thus $G'$ contains a $C_7$-critical subgraph $G''$. Note since $G'$ does not contain a cycle of length 3 or 5, it follows that $G'' \not \in \{C_3, C_5\}$. Furthermore, since $v(G'') < v(G)$ and $G$ is a minimum counterexample, we have that $p(G'') \leq 2$. Note at least one edge in $E(S_{bd})$ is contained in   $E(G'')$ since $G'' \not \subset G$. Furthermore, since $G''$ has minimum degree 2 by Lemma \ref{cut}, it follows that $S_{bd} \subset G''$. Let $F$ be the graph obtained from $G''$ by deleting $S_{bd} \setminus \{b,d\}$ and adding $S_buvS_d$. Since this is a net addition of 4 vertices and 4 edges, it follows that $p(F) = p(G'') + 17(4)-15(4) \leq 10$. By Lemma \ref{potentials7}, it follows that either $F = G$ or $G \in P_5(F)$. But this is a contradiction, since $S_a \not \subset F$ and $S_c \not \subset F$.

\end{proof}

%======================================================================
%-------------------------DISCHARGING----------------------------------
%======================================================================
%======================================================================
%-------------------------DISCHARGING----------------------------------
%======================================================================
\section{Discharging}
Now that we have established the required structure of $G$, this section will be dedicated to proving Theorem \ref{main} via discharging. The discharging will proceed in five stages: in each stage, charge will only be sent to structures that have not received charge in previous stages. It follows, then, that a structure in need of charge will only ever receive charge in a single stage.  

Let $X \subseteq V(G)$ be the set of vertices of degree at least three. We assign an initial charge of $ch_0(v) = 15\deg(v)-2\textnormal{wt}(v)-34$ to each vertex $v \in X$, and $ch_0(v) = 0$ for each $v \in V(G) \setminus X$. Note $\sum_{v \in X} (15\deg(v)-2\textnormal{wt}(v)-34) = \sum_{v \in X} (15\deg(v)-34) - \sum_{v \in V(G) \setminus X} 4$, since every vertex $v$ of degree 2 contributes to the weight of two distinct vertices in $X$ (namely, the endpoints of the string that contain $v$). Since $\sum_{v \in V(G) \setminus X} 4 = \sum_{v \in V(G) \setminus X} (34-15 \deg(v))$, we have 

\begin{align}
\sum_{v \in V(G)} ch_0(v) &= \sum_{v \in V(G)} (15\deg(v)-34) \nonumber \\ 
& = 15\sum_{v \in V(G)} \deg(v) - \sum_{v \in V(G)}  34  \nonumber \\
&= 30 e(G)-34 v(G) \nonumber \\
&= -2 p(G) \nonumber\\
&\leq -6 \textnormal{, since $p(G) \geq 3$ and potential is integral.} \label{threshhold}
\end{align}  We will redistribute the charge amongst the vertices and cells until every vertex and cell has non-negative charge, contradicting the fact that the sum of the charges is at most $-6$. The proof will be done in two sections: in Section \ref{nonewnegs}, we will show that after discharging no structures that start with non-negative charge end with negative charge. In Section \ref{getpaid}, we will show that all structures that start with negative charge end with non-negative charge.

For simplicity, we define the following term.
\begin{defs}
A vertex is $poor$ if it has negative charge.
\end{defs}

Note by Lemma \ref{weight}, if $v$ is a vertex in $V(G)$, then $\textrm{wt}(v) \leq 5\deg(v) - 7$. For a vertex $v \in X$, we therefore have $ch_0(v) \geq 15\deg(v)-2(5\deg(v)-7)-34 = 5\deg(v)-20$.  Therefore the only possibly poor vertices are vertices of degree three. If $v$ has degree three and is poor, then it has weight at least six since $ch_0(v) = 11 - 2\textrm{wt}(v)$. By Lemma \ref{weight}, vertices of degree three (and thus poor vertices) have weight at most eight. By Lemmas \ref{(4,4,k)}, \ref{4,3,k}, and \ref{3,3,2} the only poor vertices of weight eight are of type (4,2,2). The poor vertices of weight seven are of type (4,3,0), (4,2,1), (3,3,1), or (3,2,2), and the poor vertices of weight six are of type (4,2,0), (4,1,1), (3,3,0), (3,2,1), or (2,2,2). 

We will discharge in steps: for $i \in \{1,2,3,4\}$, Step $i$ consists of a single rule $Ri$ that will be carried out instantaneously throughout the graph. Step 5 consists of iteratively applying Rule 5 $x$ times, where $x$ is the number of vertices of degree three and weight five in $G$. Each application of R5 will be carried out instantaneously throughout the graph.  For convenience, since a single rule is carried out in each step, we will refer to the rules and steps interchangeably. At the end of Step $i$, the resulting charge of each cell and vertex will be denoted by $ch_i$. Finally, a $k$-string in $G$ will be called \emph{short} if $k \leq 2$.

\begin{itemize}
\item [\textbf{R1.}] Each vertex contained in a cell sends all of its charge to the cell that contains it. (Since cells are disjoint by Lemma \ref{7cycles}, this is unambiguous.) 
\item [\textbf{R2.}] Let $u$ and $v$ share a short string. If $u$ is in a cell $C$ and $v$ is poor after Step 1, $C$ sends $-ch_1(v)$ charge to $v$.
\item [\textbf{R3.}] Let $u$ and $v$ share a short string with $\deg(u) \geq 4$. If $v$ is poor after Step 2, $u$ sends  $-ch_2(v)$ charge to $v$.
\item [\textbf{R4.}] Let $u$ and $v$ share a short string with $\deg(u) = 3$ and $\textrm{wt}(u) \leq 4$. If $v$ is poor after Step 3, $u$ sends $-ch_3(v)$ charge to $v$.
\item [\textbf{R5.}] Let $u$ and $v$ share a short string with $\deg(u) = 3$ and $\textrm{wt}(u) = 5$. If $v$ is the only poor vertex that shares a short string with $u$, then $u$ sends $-ch_4(v)$ charge to $v$. (Note in this step, each vertex sends charge to at most one other vertex, though each vertex might receive charge from several others.)
\end{itemize}

Before proceeding with the proof, we note two important facts regarding the discharging rules. First, the rules are performed sequentially. This ensures that in the later steps of the discharging process, we will have uncovered a significant amount of information regarding the local structure of the vertices and cells receiving charge. Second, vertices and cells only send charge along short strings. If a vertex or cell sends charge to many poor structures, it follows that the vertex or cell sending charge has relatively low weight and so consequently has a large amount of charge to spare. 
%-------------------------------------- Cells are fine 
\subsection{No New Negative Structures are Created}\label{nonewnegs}
In this section, we will show that no cell or vertex $x$ with initial charge $ch_0(x) \geq 0$ has negative final charge after discharging. First we will show that all cells have non-negative charge at the end of the discharging process (Lemma \ref{Step1-2}). We will then prove that no vertex with degree at least four is poor after Step 5 (Lemma \ref{Step3}). Finally with Lemmas \ref{Step4} and \ref{Step5}, we will prove that all vertices of degree 3 and weight at most 5 have non-negative final charge. In Section \ref{getpaid}, we will prove that all vertices of degree 3 and weight at least 6 have non-negative final charge. As cells and vertices are the only structures that carry charge at any point during the discharging, this will show that the sum of the charges is non-negative, contradicting our initial assumption and completing the proof of Theorem \ref{main}.

Throughout the analysis, we will use the following observation repeatedly.

\begin{obs}\label{atleast-3}
If $v$ is a vertex that is poor after Step 1, then $ch_1(v) \geq -3$. If $v$ is incident with a 4-string, $v$ is not poor after Step 1.
\end{obs}
This follows from the fact that all vertices $v$ with $ch_0(v) = -5$  are incident with a 4-string, and by Lemma \ref{4str} all vertices incident with 4-strings are contained in cells (and so have $ch_1(v) = 0$).
\begin{lemma}\label{Step1-2}
Let $C$ be a cell in $G$. At the end of Step 2, $ch_2(C) \geq 0$.
\end{lemma}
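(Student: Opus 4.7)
The plan is to (1) compute $ch_1(C)$ in closed form from R1, (2) characterise the vertices that can receive charge from $C$ in R2, and (3) combine these via a per-string bound, with a separate structural argument for the minimum-degree case.

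First I would show that $ch_1(C) = 15\deg(C) - 2\operatorname{wt}(C) - 28$. Every $v \in V(C)$ of degree at least three contributes $15\deg(v) - 2\operatorname{wt}(v) - 34$ to $ch_1(C)$ via R1, while degree-$2$ vertices contribute $0$. Writing $k$ for the number of degree-$\geq 3$ vertices of $V(C)$, one has $\sum_v \deg(v) = 2k + \deg(C)$ (each such $v$ has two neighbours in $C$ and $\deg(v) - 2$ outside) and $\sum_v \operatorname{wt}(v) = (14 - 2k) + \operatorname{wt}(C)$ (the $7-k$ degree-$2$ vertices of $V(C)$ are internal to strings within $C$ and each contributes $2$ to this sum, while external strings account for $\operatorname{wt}(C)$). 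Substituting, the $k$-dependence cancels and the formula follows.

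Next I would identify the possible recipients. A candidate $v$ satisfies $ch_1(v) < 0$, so $v$ has degree $3$ and weight at least $6$, and moreover $v$ is not in any cell (else $ch_1(v) = 0$ after R1). Lemma \ref{4str} forces any degree-$3$ vertex with a $4$-string into a cell, and Lemma \ref{3,3,2} excludes type $(3,3,2)$, so the admissible types are $(3,3,0), (3,2,1), (2,2,2)$ at weight $6$ (charge $1$ sent) and $(3,3,1), (3,2,2)$ at weight $7$ (charge $3$ sent); moreover, a weight-$7$ sending can only occur via a $1$-string into a $(3,3,1)$ vertex or a $2$-string into a $(3,2,2)$ vertex. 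Writing $ch_2(C) = \sum_S (15 - 2k_S - c_S) - 28$ over the incident strings, with $k_S$ the length of $S$ and $c_S$ the charge sent via $S$, each short string contributes at least $15 - 2\cdot 2 - 3 = 8$ and each long string at least $15 - 2\cdot 4 = 7$ (by Lemma \ref{max-str}). Hence $ch_2(C) \geq 7\deg(C) - 28$, which is non-negative whenever $\deg(C) \geq 4$. Since $\deg(C) \geq 3$ by Lemma \ref{deg3cells}, only the case $\deg(C) = 3$ remains.

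The hard part is precisely this $\deg(C) = 3$ case, where the per-string estimate only yields $ch_2(C) \geq -7$ and structural refinement is needed. The plan is to rule out configurations in which $C$ sends charge $3$ along more than one short string. By Lemma \ref{3str}, each weight-$7$ poor neighbour $v$ of $C$ has its non-short strings spanning a $9$-cycle $D_v$ meeting $C$ at the near end of the short string (the cell alternative of Lemma \ref{3str} is excluded since $v$ is non-cell). Lemma \ref{7and9cycles} forces $D_v$ to be edge-disjoint from $C$, and Lemma \ref{9cycles} limits pairwise intersections of distinct $D_v$'s to paths of length at most $2$; combined with $\operatorname{wt}(C) \leq 8$ from Lemma \ref{cell-weight}, a short case analysis on the string-length multiset $(k_1,k_2,k_3)$ of the three strings incident to $C$, invoking Lemma \ref{potentials7} in the tightest subcases, yields $ch_2(C) \geq 0$ in all remaining scenarios. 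The main technical obstacle is exactly managing these attached $9$-cycles, since three distinct $D_{v}$'s sharing three vertices with a common $7$-cycle have to be ruled out delicately.
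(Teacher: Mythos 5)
Your computation of $ch_1(C) = 15\deg(C) - 2\operatorname{wt}(C) - 28$ is correct and matches the paper's derivation, as does the per-string bound $ch_2(C) \geq 7\deg(C) - 28$ that disposes of the case $\deg(C) \geq 4$. The gap is in your plan for $\deg(C) = 3$, which you frame as ``rule out configurations in which $C$ sends charge $3$ along more than one short string'' and then defer to an unspecified ``short case analysis'' over $(k_1,k_2,k_3)$. This misdiagnoses where the real difficulty lies. The paper splits on the number $|X|$ of degree-$\geq 3$ vertices on $C$. In the $|X|=3$ subcase it first proves a claim you do not have: every poor vertex $u$ that shares a short string with $C$ has $\operatorname{wt}(u) \leq 6$ (using Lemma \ref{4str}, Lemma \ref{3str}, Lemma \ref{7cycles}, and Lemma \ref{7and9cycles} to show that otherwise an edge of $C$ lies in a second cell or a $9$-cycle). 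This means in that subcase $C$ never sends charge $3$ at all, so your target configuration is not the obstruction. The genuinely tight configuration is a $(4,2,2)$-cell sending charge $1$ along each of two $2$-strings, giving $ch_2(C) = 17 - 16 - 1 - 1 = -1$; resolving it requires an identification-plus-potential argument (collapse two neighbours of a poor vertex to a new vertex, extract a $C_7$-critical subgraph, split back, and contradict Lemma \ref{potentials7}) that your sketch does not contain and that the $9$-cycle intersection lemmas alone do not yield.

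You also do not address the $|X|=2$ subcase, where $C$ has one degree-$4$ vertex $u$ with two external strings and one degree-$3$ vertex $v$ with one external string $S_3$. There the paper shows via Lemma \ref{4str} and Lemma \ref{3str} that the two internal strings of $C$ must be a $3$-string and a $2$-string and that $S_3$ must be a $0$-string, then bounds $\operatorname{wt}(C) \leq 6$ and the charge sent accordingly. None of this falls out of a case split on $(k_1,k_2,k_3)$ alone, because the same external string multiset is compatible with either value of $|X|$, and the lemmas you invoke bite differently depending on where the high-degree vertices sit on the $7$-cycle. In short, the formula for $ch_1(C)$ and the large-degree case are fine, but the $\deg(C)=3$ analysis is the bulk of the proof and your plan for it both targets the wrong configuration and omits the two structural ideas (the weight-$\leq 6$ claim for poor neighbours when $|X|=3$, and the identification/potential argument for the residual $(4,2,2)$ case) that the paper actually needs.
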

\begin{proof}
Let $X$ be the set of vertices in $C$ of degree at least three. At the end of Step 1, $ch_1(C) = \sum_{v \in X}(15\deg(v)-2\textrm{wt}(v)-34)$. Rewriting, 

\begin{align*}
ch_1(C) &= \sum_{v \in X}((15\deg(v)-30)-(2\textrm{wt}(v)+4)) \\
&=  15 \sum_{v \in X}(\deg(v)-2)-2 \sum_{v \in X} (\textrm{wt}(C)+2).
\end{align*} 

But since $\sum_{v \in X} (\textrm{wt}(v) + 2) = \textrm{wt}(C) + 14$ and $\sum_{v \in X} (\deg(v)-2) = \deg(C)$, we have $ch_1(C) = 15 \deg(C)-28-2 \textrm{wt}(C)$.  We will split our further analysis into two cases depending on the degree of the cell $C$. Note $\deg(C) \geq 3$ by Lemma \ref{stringsincells}.\\

\noindent{
\textbf{Case 1:} $\deg(C) \geq 4$.} Suppose that immediately after Step 1, there are $p$ poor vertices that are each the endpoint of a short string whose other endpoint is in $C$. Note each of these $p$ strings contributes at most 2 to the weight of $C$. For each poor vertex $u$ we have $ch_1(u) \geq -3$ by Observation \ref{atleast-3}, and so $ch_2(C) \geq ch_1(C) - 3p = 15\deg(C)-28-2\textrm{wt}(C) - 3p$. Since $\textrm{wt}(C) \leq 4(\deg(C)-p) + 2p$, we have 

\begin{align*}
ch_2(C) &\geq 15\deg(C)-28-2(4(\deg(C)-p)+2p)-3p \\
&= 7\deg(C)-28+p.
\end{align*} Since $\deg(C) \geq 4$, $ch_2(C) \geq p \geq 0$, as desired. \\

\noindent{
\textbf{Case 2:} $\deg(C) = 3$.} Suppose for a contradiction that $ch_2(C) < 0$. 

Suppose first $|X| = 2$. Then $X$ contains a single vertex of degree 3, and a single vertex of degree 4. Note all vertices that are poor immediately after Step 1 have weight at most seven, by Observation \ref{atleast-3}. Let $v$ be the vertex of degree 3 in $X$. Let the three strings incident with $v$ be $S_1$, $S_2$ and $S_3$, named such that $S_1$ and $S_2$ are contained in $C$. Suppose that $S_1$ is a 4-string. By Lemma \ref{4str}, $S_2 \cup S_3$ is contained in a cell $C' \neq C$, contradicting Lemma \ref{7cycles}. Thus we may assume $S_1$ is not a 4-string. Symmetrically, $S_2$ is not a 4-string. Since $|X| = 2$ by assumption, we may therefore assume without loss of generality that $S_1$ is a 3-string and $S_2$ is a 2-string. Note $S_3$ is therefore a 0-string. (To see this, suppose not. Then by Lemma \ref{3str} applied to $v$ and $S_1$, we have that $S_2 \cup S_3$ (and in particular, $S_2$) is contained in either a cell or a 9-cycle $C' \neq C$. We claim this cannot be: to see this, it suffices to note that $S_2 \in C \cap C'$, but by Lemma \ref{7cycles} distinct cells are vertex-disjoint and by Lemma \ref{7and9cycles}, cells and 9-cycles are edge-disjoint.) Let $v_1$ be the vertex that shares $S_3$ with $v$. Note if $ch_1(v_1) <0$, then $v_1$ is not incident with a 4-string by Observation \ref{atleast-3}. Since $v_1$ is incident with a 0-string $S_3$, we have that $\textrm{wt}(v_1) \leq 6$. Thus if $ch_1(v_1) < 0$, it follows that $ch_1(v_1) = -1$. Let $u$ be the vertex of degree 4 in $X$, and let $S_4 \not \subseteq C$ and $S_5 \not \subseteq C$ be the strings incident with $u$, with endpoints $u_1 \neq u$ and $u_2 \neq u$, respectively. Note since $ch_1(C) = 17-2\textrm{wt}(C)$, if $\textrm{wt}(C) \leq 5$, then $ch_2(C) \geq 0$ since $C$ pays at most 1 to $v_1$ in Step 2, and at most 3 to each of $u_1$ and $u_2$ in Step 2. Thus we may assume $\textrm{wt}(C) \geq 6$. Note also if neither $u_1$ nor $u_2$ is poor immediately after Step 1, then $ch_2(C) \geq 0$ since $ch_2(C) \geq 17-2\textrm{wt}(C) -ch_1(v_1)$, and $\textrm{wt}(C) \leq 8$ by Lemma \ref{cell-weight}. Thus we may assume at least one of $u_1$ and $u_2$ receives charge from $C$ in Step 2, and so that at least one of $S_4$ and $S_5$ is a short string. Note that $G$ does not contain a $k$-string with $k \geq 5$ by Lemma \ref{max-str}. Furthermore, since at least one of $S_4$ and $S_5$ is short, $S_4$ and $S_5$ together contribute at most 6 to the weight of $C$. Thus $\textrm{wt}(v) \leq 6$, and since $\textrm{wt}(C) \geq 6$, it follows that $\textrm{wt}(C) = 6$. Since $S_3$ is a 0-string and at least one of $S_4$ and $S_5$ is a short string, it follows that exactly one of $S_4$ and $S_5$ is a short string, and so that $C$ sends charge to exactly one of $u_1$ and $u_2$. But then $ch_2(C)\geq 0$, since $ch_1(C) = 17-2 \textrm{wt}(C) = 5$ and $C$ sends at most 1 to $v_1$ and 3 to one of $u_1$ and $u_2$.

Thus we may assume $|X| = 3$. Since $\deg(C) = 3$, it follows that $X$ contains three vertices of degree three. Let $S_0$ be a $k$-string with $k \leq 2$ and endpoints $u,v$ such that  $v \in V(C)$. Suppose that $u$ is poor after Step 1. Then $u$ is a degree three vertex not contained in a cell. Let $S_1$ and $S_2$ be the other strings incident with $u$. \\

\noindent{
\textbf{Claim 1.} \emph{The weight of $u$ is exactly six.}
} 
\begin{proof}
Suppose not. Note as $u$ is poor after Step 1, it has weight at least six. Note neither $S_1$ nor $S_2$ is a 4-string by Observation \ref{atleast-3}. Suppose now $k \in \{1, 2\}$ (i.e. that $S_0$ is either a 1- or 2-string). First suppose that $S_1$ is a 3-string and that $S_2$ is not a 0-string. Then by Lemma \ref{3str} applied to $u$ and $S_1$, there exists an edge $vv_1 \in E(C)$ contained in a cell or 9-cycle $C' \neq C$. If $C'$ is a 9-cycle, this is a contradiction since 9-cycles and cells are edge-disjoint by Lemma \ref{7and9cycles}. If $C'$ is a cell, this too is a contradiction since distinct cells are vertex-disjoint by Lemma \ref{7cycles}. Thus if $S_1$ is a 3-string, we may assume $S_2$ is a 0-string. Symmetrically, if $S_2$ is a 3-string, we may assume $S_1$ is a 0-string.

If $k \geq 1$, then $S_1$ and $S_2$ therefore contribute at most 4 to the weight of $C$. If $k = 0$, since neither $S_1$ nor $S_2$ is a 4-string, they contribute at most 6 to the weight of $C$.  Note by assumption $k \leq 2$. Thus $u$ has weight at most six. Since by assumption $ch_2(u) < 0$, it follows that $u$ has weight exactly six. 
\end{proof}

As $\textrm{wt}(u) = 6$, we have that $ch_2(u) = -1$. Let $A$ be the set of vertices that are poor immediately after Step 1, and that are incident with a short string incident with $C$.  Let $|A| = p$. Since $C$ has degree 3, it follows that $p \leq 3$. After Step 2, we have $ch_2(C) = ch_1(C) - p = 17 - 2\textrm{wt(C)} - p$. Note if $\textrm{wt}(C) \leq 7$, then since $p \leq 3$ we have that $ch_2(C) \geq 0$, a contradiction. Thus we may assume that $\textrm{wt}(C) \geq 8$. Note also that by Lemma \ref{cell-weight}, $\textrm{wt}(C) \leq 8$, and so we may assume $\textrm{wt}(C) = 8$. Since $ch_2(C) < 0$, we have $p \geq 2$. Thus at least two of the strings incident with $C$ each contribute at most 2 to the weight of $C$. Since $C$ has weight 8, it follows that $C$ is incident with a 4-string and so that $p \leq 2$. Thus we may assume $p=2$, and since $\textrm{wt}(C) = 8$, it follows that $C$ is a cell of type $(4,2,2)$.  

Let $S = u_0u_1u_2u_3$ be a 2-string incident with $C$, such that $u_0 \in V(C)$. Note since $u_3$ is poor after Step 2, it is not contained in a cell. Let $u_4$ and $u_5$ be the neighbours of $u_3$ that are not contained in $S$.

Let $G'$ be the graph obtained from $G$ by identifying $u_4$ and $u_5$ to a new vertex $z$. Note $G'$ contains a cell of weight nine, and so $G'$ is not $C_7$-critical by Lemma \ref{cell-weight}.  Furthermore, $G'$ admits no homomorphism to $C_7$, as any such homomorphism $\phi$ extends to $G$ by setting $\phi(u_4) = \phi(u_5) = \phi(z)$. Therefore $G'$ contains a $C_7$-critical subgraph $G''$, and since $G'' \not \subset G$, we have $z \in V(G'')$.  $G'$ contains no 5-cycles nor triangles, since $u_3 \in V(G)$ is not contained in a 7-cycle by assumption. Since $v(G'') \leq v(G)$, $G''$ is not a counterexample to Theorem \ref{main} and thus $p(G'') \leq 2$. Note at least one string incident with $C$ or at least one string $S' \subset C$ is not contained in $G''$, as otherwise $C$ has weight $9$ in $G''$, contradicting Lemma \ref{cell-weight}.
 
 Suppose first $u_3 \not \in V(G'')$. Since $G''$ has minimum degree at least 2 by Lemma \ref{cut}, it follows that $u_2 \not \in V(G'')$. Let $F$ be the graph obtained from $G''$ by splitting $z$ back to $u_4, u_5$ and adding in the path $u_4u_3u_5$. Then $p(F) = p(G'') + 17(2) - 15(2) \leq 6$. Since $u_2 \not \in V(G'')$ and since $F \subset G$, this contradicts Lemma \ref{potentials7}. 
 
 We may therefore assume that $u_3 \in V(G'')$, and so there exists a string $S' \neq S$ whose internal vertices are not contained in $V(G'')$. Note either $S'$ is incident with $C$, in which case it has at least 2 internal vertices, or $S' \subset C$, in which case $V(C)\cap V(G'') = \emptyset$ since $G''$ contains no $k$-string with $k\geq 5$ by Lemma \ref{max-str}. Thus we have $V(G) \setminus V(G'') \neq \emptyset$. Let $F$ be the graph obtained from $G''$ by splitting $z$ back to $u_4$ and $u_5$, and adding edges to create the path $u_4u_3u_5$. Then $p(G) = p(G'') + 17(1)-15(1) \leq 6$. Since $F \subset G$ but $F \subsetneq G$, again this contradicts Lemma \ref{potentials7}.
\end{proof}

\begin{lemma}\label{Step3}
Let $v \in V(G)$ be a vertex of degree at least four. At the end of Step 3, $ch_3(v) \geq 0$.
\end{lemma}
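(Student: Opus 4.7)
The plan is to split into three cases based on whether $v$ lies in a cell and on the value of $\deg(v)$.

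First, suppose $v \in V(C)$ for some cell $C$. By R1, $v$ sent all of its charge to $C$, so $ch_1(v) = 0$. Any vertex $u$ that shares a short string with $v$ and is poor after Step 1 satisfies the hypothesis of R2 with $v$ as the cell-endpoint, and therefore receives $-ch_1(u)$ from $C$; thus $u$ is non-negative after Step 2. Since no vertex sharing a short string with $v$ is poor after Step 2, $v$ sends nothing in Step 3 and $ch_3(v) = 0$.

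Now assume $v$ is in no cell. A key observation from Lemma \ref{4str} is that every poor $u$ reachable from $v$ via a short string has type $(3,\ast,\ast)$: a type $(4,k_1,k_2)$ vertex would have its non-$4$-strings contained in a cell, placing $u$ in a cell and hence non-poor after Step 1. Thus every payout in Step 3 is to a degree-$3$ vertex of weight $6$ or $7$, each requiring $v$ to send at most $3$. For $\deg(v) \geq 5$, combining $\textrm{wt}(v) \leq 4\deg(v) - 2s$ (where $s$ is the number of short strings at $v$ going to Step-$3$ recipients) with Lemma \ref{weight} yields $ch_0(v) \geq 7\deg(v) + 4s - 34$; subtracting the at-most $3s$ paid out gives $ch_3(v) \geq 7\deg(v) - 34 + s \geq 1$ for $\deg(v) \geq 5$.

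The heart of the argument is $\deg(v) = 4$. Here Lemma \ref{deg4s} forces $v$ to have no $4$-string, so each incident string contributes at most $3$ to $\textrm{wt}(v)$, giving $ch_0(v) \geq 2$. I would distinguish low-paying recipients (types $(3,3,0)$ and $(3,2,1)$, paying $1$) from high-paying recipients (types $(3,3,1)$ and $(3,2,2)$, paying $3$). For each high-paying short string at $v$, Lemma \ref{3str} together with the assumption that $v$ is in no cell forces $v$ onto a $9$-cycle containing that short string, and Lemma \ref{9cycles} constrains how these $9$-cycles can overlap at $v$. In the borderline configurations---four short $2$-strings to type-$(3,2,2)$ recipients, or three such together with a $3$-string---this analysis forces $v$ to be the common vertex of $9$-cycles whose structure I would rule out by a potential-based identification argument: identify carefully chosen degree-$2$ neighbors of $v$ on different $9$-cycles to produce a smaller graph with no $C_7$-homomorphism, and then apply Lemma \ref{potentials7} to the resulting $C_7$-critical subgraph, as in the proofs of Lemmas \ref{4str}, \ref{3str}, and \ref{(3,3,0)}.

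The main obstacle is precisely the $\deg(v) = 4$ case with many short $2$-strings to type-$(3,2,2)$ poor vertices. Lemma \ref{3,2,2s} does not directly exclude this, because its hypothesis requires both $2$-string endpoints of a type-$(3,2,2)$ vertex to have degree $3$, whereas $v$ has degree $4$. So the exclusion must come from combining the forced $9$-cycle structure with Lemma \ref{9cycles} and identification arguments, carefully tracking the resulting potential deficit to invoke Lemma \ref{potentials7}.
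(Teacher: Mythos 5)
Your outline tracks the paper's argument closely: dispense with $v$ in a cell, handle $\deg(v)\geq 5$ by the weight bound, note via Lemma~\ref{4str} that recipients are degree-3 and not in cells, rule out $4$-strings at $v$ via Lemma~\ref{deg4s}, and reduce $\deg(v)=4$ to borderline types with many short strings to weight-$7$ recipients. But the sketch stops exactly where the work is. First, for type $(3,2,2,2)$, identification is not what's needed and you should not expect it to be: applying Lemma~\ref{3str} to each of the three $(3,2,2)$ recipients forces $9$-cycles each of which must contain two of $v$'s $2$-strings (because the closing path from $v$ has length $3$ and cannot be the $3$-string); with only three $2$-strings available, two of these $9$-cycles must share a common $2$-string, a path of length $3$, contradicting Lemma~\ref{9cycles} outright. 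Second, for type $(2,2,2,2)$, the identification the paper uses is more delicate than ``identify degree-$2$ neighbors of $v$'': it identifies $v$ with $a_1$, a degree-$2$ vertex at distance two from $v$ along $S_a=aa_1a_2v$, and deletes $a_2$, collapsing the $9$-cycle $S_a\cup S_{ab}\cup S_b$ into a cell; the point of this specific choice is that the resulting cell has excessive weight, violating Lemma~\ref{cell-weight}---not Lemma~\ref{potentials7}---which is how $C_7$-criticality of the identified graph is ruled out so that a proper critical subgraph and the potential bookkeeping can be applied. Your proposed identification (degree-$2$ neighbors of $v$) would not produce a $7$-cycle from the $9$-cycle and does not obviously yield the weight contradiction, so you would get stuck there. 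Third, you omit type $(3,2,2,1)$ from the borderline list; the paper treats it (though its initial charge $10$ against at most three $\times$ $3$ of payment actually closes it by arithmetic alone), and a complete write-up must at minimum acknowledge it. In sum: same strategy, but the unverified ``I would rule out'' covers the substantive content, and the particular identification you name is the wrong one.
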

\begin{proof}
Suppose not. Let $A$ be the set of vertices that are poor immediately after Step 2 and that each share a short string with $v$. Let $p = |A|$. Note none of the $p$ vertices in $A$ are contained in cells, as they are poor after Step 2; furthermore, $v$ is not contained in a cell, as this cell would send charge to the vertices in $A$ in Step 2.  By Observation \ref{atleast-3},  $v$ sends at most $3p$ units of charge to the vertices of $A$ in Step 3.

First, suppose $\deg(v) \geq 5$. At the end of Step 3, we have $ch_3(v) \geq ch_2(v) - 3p =15\deg(v)-2\textrm{wt}(v)-34-3p$. Note since $v$ is incident with $p$ short strings and the remaining $\deg(v)-p$ strings incident with $v$ each contribute at most 4 to the weight of $v$, it follows that $\textrm{wt}(v) \leq 4(\deg(v)-p)+2p$. Thus $ch_3(v) \geq  15\deg(v)-2(4\deg(v)-2p)-34-3p = 7\deg(v)-34+p \geq 1+p$, since $\deg(v) \geq 5$. Since $p$ is non-negative, $ch_3(v) \geq 0$, a contradiction. 

Thus we may assume $\deg(v) = 4$. Since $v$ is not contained in a cell, by Lemma \ref{deg4s} $v$ is not incident with a 4-string. Since $v$ is incident with at least $p$ short strings and at most ($\deg(v)-p$) $k$-strings with $k=3$, it follows that $v$ has weight at most $3(\deg(v)-p)+2p = 12-p$, and so 
\begin{align*}
ch_3(v) \geq ch_2(v) - 3p &= 15\deg(v)-2\textrm{wt}(v)-34 - 3p \\
&\geq 60-2(12-p)-34-3p \\
&= 2-p.
\end{align*} 
Thus if $p \leq 2$, we have $ch_3(v) \geq 0$, a contradiction.

Since $p \leq \deg(v) = 4$, we may therefore assume $p \in \{3,4\}$. Note if $\textrm{wt}(v) \leq 7$, since 
\begin{align*}
ch_3(v) &\geq 15\deg(v)-2\textrm{wt}(v)-34 - 3p \\
&\geq 60-14-34-3p \\
&= 12-3p
\end{align*} it follows that $ch_3(v) \geq 0$, a contradiction. 

Thus $v$ has weight at least $8$ and is incident with at least three short strings. It follows that $v$ is either a vertex of type $(3,2,2,2)$, $(2,2,2,2)$, or $(3,2,2,1)$.  Note if $v$ shares a short string with a poor vertex $v'$ of weight 6, then $v$ pays $v'$ only 1 and each of the other $(p-1)$ vertices in $A$ at most 3. Thus $v$ pays the vertices in $A$ at most $3(p-1)+1$, and so it follows that $ch_3(v) \geq 4-p$. Since $p \in \{3,4\}$, this is non-negative.  Thus we may assume that $v$ pays at least three poor vertices of weight 7. We finish the proof after the following two claims. \\

\noindent{
\textbf{Claim 1.}\emph{ $v$ is not of type $(2,2,2,2)$.}
}
\begin{proof}
Suppose not. Recall $v$ only pays a subset of the vertices of degree 3 that are not contained in cells. Furthermore, since $v$ is of type $(2,2,2,2)$, $v$ only pays vertices that are incident with 2-strings. Since the only vertices of degree 3 and weight 7 not contained in cells and incident with 2-strings are vertices of type $(3,2,2)$, we may assume $v$ pays at least three vertices of type $(3,2,2)$. Since each vertex of type $(3,2,2)$ in $A$ is not in a cell, by Lemma \ref{3str} they are each contained in 9-cycles. Thus there exist vertices $a \neq b$ in $A$ of type $(3,3,2)$, such that $S_a= aa_1a_2v$ is a 2-string shared by $v$ and $a$, $S_b$ is a 2-string shared by $v$ and $b$, and $S_{ab}$ is a 2-string shared by $a$ and $b$. Similarly, there exist vertices $c \neq d$ in $A$ such that $c$ is of type $(3,2,2)$, $S_c$ is a 2-string shared by $v$ and $c$, $S_d$ is a 2-string shared by $v$ and $d$, and $S_{cd}$ is a 2-string shared by $c$ and $d$. Let $C$ be the 9-cycle formed by $S_a\cup S_b\cup S_{ab}$.  Let $G'$ be the graph obtained from $G$ by identifying $v$ and $a_1$ to a new vertex $z$, and deleting $a_2$.  Note $G'$ does not admit a homomorphism $\phi$ to $C_7$ as otherwise $\phi$ extends to $G$ by setting $\phi(v) = \phi(a_1) = \phi(z)$ and $\phi(a_2) \in N_{C_7}(\phi(z))$. Furthermore, $G'$ itself is not $C_7$-critical, as the cycle $C'$ obtained from $C$ with the identification of $v$ and $a_1$ is a cell of weight 10 in $G'$, contradicting Lemma \ref{cell-weight}. Thus $G'$ contains a $C_7$-critical subgraph $G''$.

Note $G''$ does not contain at least one string incident with $C'$ or one string in $E(C')$. We claim one of these strings not contained in $G''$ contains at least one internal vertex. To see this, suppose not. Note $C'$ is incident with four strings: $S_c$, $S_d$, and two 3-strings $S_a'$ and $S_b'$ incident with $a$ and $b$, respectively. Since these four strings all have internal vertices, we may assume they are all contained in $G''$. Since $S_{ab}$ and $S_b$ are both 2-strings, we may assume they are both contained in $G''$. Thus $G'$ does not contain the edge $az$. But then $S_{ab} \cup S_a'$ is a 6-string contained in $G''$, contradicting Lemma \ref{max-str}.

Thus we may assume there is a vertex in $V(G) \setminus \{a_1, a_2, v\}$ that is not contained in $V(G'')$.

Since $G'' \not \subset G$, it follows that $G''$ contains the new vertex $z$. Note $G''$ does not contain a triangle or 5-cycle, since $S_a$ is contained in a 9-cycle in $G$ and so is not contained in a cell by Lemma \ref{7and9cycles}. Since $v(G'') < v(G')$, it follows that $p(G'') \leq 2$.

Let $F$ be the graph obtained from $G''$ by splitting $z$ back into the vertices $v$ and $a_1$ and adding in the path $va_2a_1$. Then $p(F) = p(G'') + 17(2)-15(2) \leq 6$. But since $F \neq G$, this contradicts Lemma \ref{potentials7}.
\end{proof}

\noindent{
\textbf{Claim 2.} \emph{$v$ is not a vertex of type $(3,2,2,2)$.}
}
\begin{proof}
Suppose not.  Since $v$ is of type $(3,2,2,2)$, $v$ only pays vertices that are incident with 2-strings. Since the only vertices of degree 3 and weight 7 not contained in cells and incident with 2-strings are vertices of type $(3,2,2)$, we may assume $v$ pays three vertices of type $(3,2,2)$. Since each vertex of type $(3,2,2)$ in $A$ is not in a cell, by Lemma \ref{3str} they are each contained in 9-cycle. Let $a \neq b \neq c$ be the three vertices of type $(3,2,2)$ in $A$. Let $S_a$, $S_b$, and $S_c$ be the 2-strings shared by $v$ with $a$, $b$, and $c$, respectively.   Let $S_a'$ be the other 2-string incident with $a$. By Lemma \ref{3str}, $S_a' \cup S_a$ is contained in a 9-cycle $C$. Since $v$ has degree exactly 4 and is incident only with 2-strings and a 3-string, one of $S_b$ and $S_c$ is contained in $C$. Without loss of generality, we may assume $C = S_a \cup S_a' \cup S_b$, and so that $S_a'$ is shared by $b$ and $a$.  Similarly, let $S_c' \neq S_c$ be a 2-string incident with $c$. By Lemma \ref{3str}, $S_c \cup S_c'$ is contained in a 9-cycle $C' \neq C$. Since $v$ has degree exactly 4 and is incident only with 2-strings and a 3-string, it follows that there exists a 2-string $S \neq S_c$ incident with $v$ that is contained in $C'$. Thus $S_a$ or $S_b$ is contained in $C'$. But this contradicts Lemma \ref{9cycles}, since $S_b \cup S_a \subset C$.
\end{proof}

The only remaining possibility is then that $v$ is of type (3,2,2,1). Since $|A| \geq 3$ and each vertex in $A$ has weight 7, we may assume $v$ shares its incident 2-strings $S_1$ and $S_2$ with two poor $(3,2,2)$ vertices $u_1$ and $u_2$, and its incident 1-string $S_3$ with a poor $(3,3,1)$ vertex $u_3$. By applying Lemma \ref{3str} to $u_1, u_2$, and $u_3$, we find each of $S_1$ and $S_2$ is contained in two 9-cycles, contradicting Lemma \ref{9cycles}.
\end{proof}

\begin{lemma}\label{Step4}
Let $v \in V(G)$ be a vertex of degree 3 and weight at most 4. At the end of Step 4, $ch_4(v) \geq 0$.
\end{lemma}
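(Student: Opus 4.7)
The plan is to split into two cases based on whether $x$ lies in a cell.

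\textbf{Case A:} $x \in V(C)$ for some cell $C$. Rule R1 transfers $x$'s entire initial charge to $C$, so $ch_1(x) = 0$, and since $x$ is not poor thereafter, it receives nothing in Steps 2--5. I need only show R4 forces no payment from $x$. If $x$ shares a short string with a vertex $y$ that is poor after Step 1, R2 applies with $u = x \in V(C)$, so $C$ pays $y$ the amount $-ch_1(y)$ in Step 2, making $y$ non-poor for Step 3 onward. If $y$ was not poor after Step 1, it remains non-poor. Either way R4 does not fire at $x$, and $ch_4(x) = 0$.

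\textbf{Case B:} $x$ is not contained in a cell. Then $x$ is untouched by Steps 1--3, so $ch_3(x) = ch_0(x) = 11 - 2\,\mathrm{wt}(x) \geq 3$. I would enumerate the types $(k_1,k_2,k_3)$ of $x$ with $k_1+k_2+k_3 \leq 4$; type $(4,0,0)$ is excluded here, since Lemma \ref{4str} would force $x$ into a cell. The possible poor vertex types after Step 1 are $(3,3,1), (3,2,2), (3,3,0), (3,2,1)$ with $|ch_1| \in \{3,3,1,1\}$ respectively (heavier $(4,*,*)$ poor types lie in cells by Lemma \ref{4str} and are neutralized in Step 1; other heavy types are ruled out by Lemmas \ref{(4,4,k)}, \ref{4,3,k}, and \ref{3,3,2}). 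The key structural tool is Lemma \ref{3str}: any poor neighbor $y$ of type $(3,2,2)$, $(3,3,1)$, or $(3,2,1)$ has its two non-3-strings contained in a 9-cycle $C_y$, since the cell alternative is blocked by $y$ not being in a cell. So if $x$ shares a short string with such a $y$, then $x \in V(C_y)$.

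The main obstacle is the dense types $(2,2,0), (2,1,1), (1,1,1), (2,1,0), (3,1,0)$, where $x$ has only $3$ or $5$ units of charge against potentially multiple R4 recipients paying up to $3$ each. The plan is to show $x$ cannot share short strings with too many weight-$7$ poor neighbors simultaneously. If $x$ shared short strings with two weight-$7$ poor neighbors $y_1, y_2$, then $x \in V(C_{y_1}) \cap V(C_{y_2})$; if $C_{y_1} \neq C_{y_2}$, Lemma \ref{9cycles} bounds their intersection to a path of length at most $2$ through $x$, and a pigeonhole argument on the two edges each 9-cycle uses at $x$ (of which $x$ has only three) forces an edge to be shared between $C_{y_1}$ and $C_{y_2}$, propagating into a structural contradiction (traced via the degree-2 internal vertices of the shared short string and the impossibility of matching the required string types at the endpoints). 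If $C_{y_1} = C_{y_2}$, tracing the 9-cycle through $x$ yields an incompatible arrangement of $x$'s incident strings with the non-3-strings of $y_1$ and $y_2$. For $(3,3,0)$ poor neighbors reached via a 0-string from $x$, Lemma \ref{(3,3,0)} eliminates specific configurations by forcing $x$ out of type $(3,3,0)$, and can be combined with the 9-cycle argument when $x$ is also incident with a 1-string or 2-string to another poor vertex. Lower-weight types with $ch_3(x) \geq 7$ are dispatched by direct counting once the weight-$7$ obstruction is clarified.
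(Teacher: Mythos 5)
Your Case A (the vertex $v$ lies in a cell) is handled fine and agrees in substance with the paper, which simply notes that $v$ cannot lie in a cell since that cell would pay $v$'s poor neighbours in Step~2. The trouble is Case B, where the proposal stops being a proof and becomes a wish list. The paper's actual argument starts with a clean dichotomy you do not make: if every vertex in $A$ (the poor vertices sharing a short string with $v$) has weight~$6$, then $v$ pays at most $p \leq 3$ units and $ch_4(v) \geq ch_3(v) - 3 \geq 0$ directly; otherwise some $u \in A$ has weight~$7$. The weight-$7$ possibilities are $(3,2,2)$ and $(3,3,1)$, and the paper eliminates $(3,2,2)$ immediately via Lemma~\ref{3,2,2s} (applied to $u$: since $v$ has degree~$3$ and is not in a cell, the other degree-$3$ endpoint of $u$'s second $2$-string must be in a cell, so $u$ would have been paid in Step~2 or~3). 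Your proposal never invokes Lemma~\ref{3,2,2s}, so the $(3,2,2)$ case survives and you'd need some other way to kill it; you have not shown one.

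The remaining case $u$ of type $(3,3,1)$ is where the real combinatorics live, and your sketch has two concrete gaps. First, the dangerous configuration when $\mathrm{wt}(v) = 4$ (so $ch_3(v) = 3$) is not ``two weight-$7$ poor neighbours'' but rather one weight-$7$ poor neighbour together with \emph{any} additional poor neighbour, even of weight~$6$: one payment of~$3$ plus one of~$1$ already overdraws. Your plan explicitly targets only the two-weight-$7$ scenario, and the separate remark about $(3,3,0)$ neighbours and Lemma~\ref{(3,3,0)} is off-target here (a weight-$\leq 4$ vertex is never of type $(3,3,0)$, and the paper does not use Lemma~\ref{(3,3,0)} in this proof at all). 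Second, the 9-cycle arguments you gesture at --- ``a pigeonhole argument \ldots forces an edge to be shared \ldots propagating into a structural contradiction \ldots an incompatible arrangement'' --- are not carried out. The paper's actual contradiction is delicate: from the $(3,3,1)$ vertex $u$ one gets two $9$-cycles $C, C'$ through $v$ meeting only in the $1$-string, forces a second poor neighbour $w$ onto one of them (using $p \geq 2$), and then either bounds $\mathrm{wt}(w) \leq 4 < 6$ by tracing $w$'s two strings inside that $9$-cycle, or (when the shared string is a $2$-string) produces a third $9$-cycle via Lemma~\ref{3str} whose $3$-edge overlap with $C'$ contradicts Lemma~\ref{9cycles}. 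None of these steps appear in your sketch, so as written the proposal does not constitute a proof.
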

\begin{proof}
Suppose not. Let $A$ be the set of vertices that are poor immediately after Step 3 and that each share a short string with $v$. Let $p = |A|$. Note none of the $p$ vertices in $A$ are contained in cells, as otherwise they have charge at least 0 at the end of Step 1. Furthermore, $v$ is not contained in a cell, as otherwise this cell sends charge to the vertices in $A$ in Step 2.  Note since all vertices of degree 3 and weight 8 are contained in cells by Lemma \ref{4str}, each vertex in $A$ has weight at most 7, and so $v$ sends at most $3p$ units of charge in Step 3.

Note if $A$ contains $p \leq 3$ vertices of weight 6, then $v$ sends only 3 units of charge in Step 4, and so
\begin{align*}
ch_4(v) &\geq ch_3(v) - 3 \\
&= 15\deg(v)-2\textrm{wt}(v)-34-3\\
&\geq 45-8-34-3 \\
&=0, \textnormal{ a contradiction.}
\end{align*} 

We may therefore assume $A$ contains a vertex $u$ of weight at least 7. Since $A$ only contains vertices of weight at most seven, we may assume $u$ has weight exactly 7. Furthermore, if $p=1$ (and so $v$ sends charge to only one vertex in Step 4), then $ch_4(v) = ch_3(v)-3 =0$, a contradiction. Thus we may assume $p \geq 2$. Since vertices of type $(4,3,0)$ and $(4,2,1)$ are contained in cells and thus have charge 0 after Step 1, we may assume $u$ is either of type $(3,3,1)$ or of type $(3,2,2)$.

First suppose $u$ is of type $(3,2,2)$. Let $u_1$ be the other vertex that shares a 2-string with $u$. Note if $u_1$ is contained in a cell, then $ch_3(u) = 0$ by Step 2, and so $u \not \in A$, a contradiction. Furthermore, if $\deg(u_1) \geq 4$, then $ch_3(u) = 0$ by Step 3, and so again $u \not \in A$. We may therefore assume $u_1$ has degree three. But then this contradicts Lemma \ref{3,2,2s}, as neither $v$ nor $u_1$ is contained in a cell.

Thus we may assume $u$ is of type $(3,3,1)$. Let $S_1$ and $S_3$ be the two 3-strings incident with $u$, and let $S_2$ be the 1-string shared by $u$ and $v$ (see Figure \ref{fig:deg3wt4}). 
By Lemma \ref{3str}, since $u$ is a $(3,3,1)$-vertex not contained in a cell,  we have that $S_2 \cup S_3$ is contained in a 9-cycle $C$. Similarly, $S_1 \cup S_2$ is contained in a 9-cycle $C' \neq C$. Since both 9-cycles contain $S_2$, we have that $C' \cap C = S_2$ by Lemma \ref{9cycles}. 
Let $S_4$ and $S_5$ be the other two strings incident with $v$. Without loss of generality, suppose $S_4 \subset C'$ and $S_5 \subset C$. Note since $p \geq 2$, one of $S_4$ and $S_5$ is shared by $v$ with a vertex in $A$. Without loss of generality, we may assume $S_4$ is shared by $v$ and a vertex $w$ in $A$. Since $v$ has weight at most four and is incident with a 1-string $S_2$, it follows that at least one of $S_4$ and $S_5$ is not a 2-string.

First suppose $S_4$ is not a 2-string.  Since $w \in A$, we have that $w$ has degree three and $\textrm{wt}(w) \geq 6$. Furthermore, since $w$ is not contained in a cell, $w$ is not incident with a 4-string by Lemma \ref{4str}. Since the internal vertices of two of the strings incident with $w$ are contained in $C' \setminus V(S_1 \cup S_2)$, together these two strings contribute at most 1 to the weight of $w$. But then $w$ has weight at most 4, a contradiction.

Thus we may assume $S_4$ is a 2-string, and since $w$ has weight at least 6, we have that $w$ is a vertex of type $(3,2,k)$ with $k \geq 1$. 

Let $S_6$ denote the third string incident with $w$ (so $w$ is incident with $S_1, S_4$, and $S_6$). By Lemma \ref{3str} applied to $w$, $S_4$ and $S_6$ are contained in a 9-cycle $C''$.  Since $S_4$ is contained in $C' \cap C''$, this contradicts Lemma \ref{9cycles}.
\end{proof}

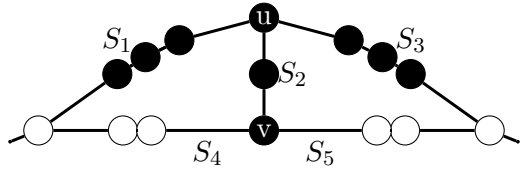
\begin{figure}[!h]
	\begin{center}
        
            \begin{tikzpicture}[scale=0.75]	
			\tikzset{black node/.style={shape=circle,draw=black,fill=black,inner sep=0pt, minimum size=11pt}}
            \tikzset{white node/.style={shape=circle,draw=black,fill=white,inner sep=0pt, minimum size=11pt}}
			\tikzset{invisible node/.style={shape=circle,draw=black,fill=black,inner sep=0pt, minimum size=1pt}}
				\tikzset{edge/.style={black, line width=.4mm}}		

                \node[black node] (1) at (0,0){$\color{white}v$};
                \node[black node] (2) at (0,1){};
                \node[black node] (3) at (0,2){$\color{white}u$};                
                \node[black node] (4) at (1.5,1.6){};                
                \node[black node] (5) at (2.1,1.3){};                
                \node[black node] (6) at (2.6,1){};                
                \node[white node] (7) at (4.0,0){};                
                \node[white node] (8) at (2.5,0){};                
                \node[white node] (9) at (2,0){};                
                \node[black node] (10) at (-1.5,1.6){};                
                \node[black node] (11) at (-2.1,1.3){};                
                \node[black node] (12) at (-2.6,1){};                
                \node[white node] (13) at (-4.0,0){};                
                \node[white node] (14) at (-2.5,0){};                
                \node[white node] (15) at (-2,0){};
                \node[invisible node] (16) at (4.5, -0.2){};
                \node[invisible node] (17) at (-4.5, -0.2){};

                \draw[edge] (1)--(2);
                \draw[edge] (2)--(3);
                \draw[edge] (3)--(4);
                \draw[edge] (4)--(5);
                \draw[edge] (5)--(6);
                \draw[edge] (6)--(7);
                \draw[edge] (7)--(8);
                \draw[edge] (8)--(9);
                \draw[edge] (9)--(1);
                \draw[edge] (3)--(10);
                \draw[edge] (10)--(11);
                \draw[edge] (11)--(12); 
                \draw[edge] (12)--(13);
                \draw[edge] (13)--(14);
                \draw[edge] (14)--(15); 
                \draw[edge] (15)--(1);
                \draw[edge] (10)--(11);
                \draw[edge] (11)--(12);   
                \draw[edge] (7)--(16);
                \draw[edge] (13)--(17);
                \node[] at (-2.6,1.6) {$S_1$};
                \node[] at (0.6,1) {$S_2$};
                \node[] at (2.6,1.6) {$S_3$};
                \node[] at (-1,-0.4) {$S_4$};   
                \node[] at (1,-0.4) {$S_5$};                 
			\end{tikzpicture}
            \hskip 8mm
    \end{center}
	\caption{Figure for Lemma \ref{Step4}. $v$ has weight at most four and degree three. The white vertices are of unknown degree, though their degree is at least that shown. The black vertices' degrees are as illustrated.}
	\label{fig:deg3wt4}
\end{figure}
\begin{lemma}\label{Step5}
Let $v \in V(G)$ be a vertex of degree 3 and weight 5 that shares a short string with only one poor vertex either during Step 5. At the end of Step 5, $ch_5(v) \geq 0$.
\end{lemma}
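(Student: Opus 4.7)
The initial charge of $v$ is $ch_0(v) = 15 \cdot 3 - 2 \cdot 5 - 34 = 1$. I first dispose of the possibility that $v$ lies in a cell $C$: two of $v$'s strings then lie inside $C$, so every in-cell short-string neighbor of $v$ has surrendered its charge to $C$ in Step 1 and is non-poor, while any out-of-cell short-string neighbor $w'$ of $v$ that was poor after Step 1 would receive $-ch_1(w')$ from $C$ via R2 in Step 2, also becoming non-poor; hence R5 is vacuous for $v$ and $ch_5(v) = ch_4(v) = 0$. From here I assume $v$ is not in any cell. Then $v$ neither sends nor receives charge in Steps 1--4: Step 1 acts only on in-cell vertices; Steps 2, 3, 4 send only to poor vertices while $ch_0(v) = 1 > 0$ keeps $v$ from being a recipient; and $v$ does not meet the sender criteria in Steps 3 or 4. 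Thus $ch_4(v) = 1$. If $v$ has no poor short-string neighbor, R5 is vacuous and $ch_5(v) = 1 \geq 0$. Otherwise, by hypothesis, there is a unique such neighbor $w$ and R5 gives $ch_5(v) = 1 + ch_4(w)$. Since $w$ is poor after Step 4, no step of 1--4 fired for $w$ (any firing would make $w$ non-poor), so $ch_4(w) = ch_0(w) = 11 - 2\,\textrm{wt}(w)$. Hence $ch_5(v) \geq 0$ if and only if $\textrm{wt}(w) \leq 6$; combined with $w$ being poor (so $\textrm{wt}(w) \geq 6$), the task is to show $\textrm{wt}(w) = 6$.

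Assume for contradiction $\textrm{wt}(w) \geq 7$. By Lemmas \ref{(4,4,k)}, \ref{4,3,k}, and \ref{3,3,2}, the only weight-8 type is $(4,2,2)$, which Lemma \ref{4str} places in a cell, forcing $ch_1(w) = 0$ and contradicting $w$ poor. The weight-7 types are $(4,3,0), (4,2,1), (3,2,2)$, and $(3,3,1)$. Lemma \ref{4str} places $(4,3,0)$ and $(4,2,1)$ in cells. For $(3,2,2)$, Lemma \ref{3str} puts $w$ in a 9-cycle (a cell is excluded since $w$ is not in one), and Lemma \ref{3,2,2s} forces the other 2-string neighbor of $w$ (of degree $3$ since $v$ is) into a cell, after which R2 in Step 2 makes $w$ non-poor---contradiction.

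The remaining and hardest case is $(3,3,1)$. The short string shared by $v$ and $w$ must be $w$'s unique 1-string $vxw$. Applying Lemma \ref{3str} to $w$ with each 3-string as $S_3$ (the other two strings each have at least two edges), the cell alternative would require the corresponding 3-string endpoint to be adjacent to $v$ via a 0-string; but since $v$ has weight 5 and a 1-string, its possible types are $(4,1,0), (3,1,1)$, or $(2,2,1)$, and only $(4,1,0)$ includes a 0-string while Lemma \ref{4str} places that type in a cell---contradicting our standing assumption on $v$. Hence both applications yield 9-cycles $C_1, C_2$ through $vxw$, and Lemma \ref{9cycles} gives $C_1 \cap C_2 = vxw$.

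I split on $v$'s type. For $v$ of type $(3,1,1)$, Lemma \ref{3str} applied to $v$'s 3-string yields a 9-cycle $C$ containing both 1-strings of $v$; analyzing how $C$ interacts with $C_1, C_2$ via Lemma \ref{9cycles} forces the 3-edge closing paths of both $C_1$ and $C_2$ to pass through $v$'s second 1-string, which makes $C_1 \cap C_2$ strictly contain $vxw$ as a ``Y''-shape at $v$, contradicting Lemma \ref{9cycles}. For $v$ of type $(2,2,1)$, the closing paths of $C_1, C_2$ must use $v$'s 2-strings, forcing the 3-string endpoints of $w$ to coincide with the endpoints $a, b$ of $v$'s 2-strings. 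The hypothesis that $w$ is $v$'s unique poor short-string neighbor gives $a, b$ non-poor; Lemma \ref{3str} applied to $a$ (and symmetrically $b$) forces its third string to have length $0$, since otherwise the resulting 9-cycle would share a length-$3$ subpath with $C_2$, contradicting Lemma \ref{9cycles}. Finally, the endpoint $a^*$ of $a$'s $0$-string cannot equal $b$ (else a 7-cycle forms through $v$, contradicting $v$ not in a cell) nor equal $b^*$ (else the subgraph $H \cup \{aa^*, bb^*\}$ has potential $17 \cdot 16 - 15 \cdot 18 = 2$, contradicting Lemma \ref{potentials7}); the remaining configuration is ruled out by identifying $a^*$ and $b^*$ to produce a smaller non-$C_7$-colorable graph whose $C_7$-critical subgraph contradicts Lemma \ref{potentials7}. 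The main obstacle, I expect, is the $(3,3,1)$ analysis and in particular the last step of eliminating the $(2,2,1)$ subcase, where multiple nested identifications and potential bookkeeping are needed to close the argument.
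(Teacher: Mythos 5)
Your overall framework matches the paper's: the charge arithmetic giving $ch_5(v) = 12 - 2\,\textrm{wt}(w)$ (so the task reduces to showing $\textrm{wt}(w)=6$), the elimination of weight-8 types and the weight-7 types $(4,3,0)$, $(4,2,1)$, $(3,2,2)$, and the reduction to $w$ of type $(3,3,1)$ with $v$ of type $(3,1,1)$ or $(2,2,1)$ are all in line with the paper. Your explicit disposal of the ``$v$ in a cell'' case is a reasonable bit of bookkeeping the paper leaves implicit. In the $(3,1,1)$ subcase you take a detour (applying Lemma~\ref{3str} to $v$ and analyzing how the resulting 9-cycle interacts with $C_1,C_2$) where the paper simply counts vertices: since $v(S_1 \cup S_5 \cup S_3) = 11 > 9$, the closing path of each $C_i$ must pass through $v$'s second 1-string, giving $C_1 \cap C_2 \supseteq S_1 \cup S_2$ (a path of length 4, not a ``Y''-shape --- a subgraph of a cycle cannot have a vertex of degree 3), contradicting Lemma~\ref{9cycles}; your route is not obviously wrong but is murkier than needed.

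The genuine gap is in the $(2,2,1)$ subcase, where you deviate from the paper's method and the argument as written does not close. You apply Lemma~\ref{3str} to $a$ and $b$ and speak of their ``third string,'' which presupposes $\deg(a)=\deg(b)=3$; nothing in the lemma's hypothesis or the structure established so far rules out $\deg(a)\geq 4$ (note $a$ shares a 3-string, not a short string, with the poor vertex $w$, so Step~3 gives no leverage). Moreover the concluding step --- identifying $a^*$ and $b^*$ --- is only sketched and, as you acknowledge, would need the triangle/5-cycle and potential bookkeeping carried out; it is not clear this route terminates. The paper's argument here is degree-agnostic and is the key idea you are missing: it deletes $u$, $v$, and the interiors of all five incident strings to get $G'$, observes (via the 9-cycle structure you both established, where $C^{(1)}=S_1\cup S_3\cup S_b$ and $C^{(2)}=S_1\cup S_2\cup S_a$) that any homomorphism $\phi$ of $G'$ with $\phi(a)=c_1$ must have $\phi(b)=c_4$ or else extend to $G$ (this is the content of Figure~\ref{fig:deg3wt5}), then adjoins a single new vertex $z$ adjacent to $a$ and $b$ --- which forces $\phi(b)\in N_{C_7}(N_{C_7}(c_1))=\{c_1,c_3,c_6\}$, excluding $c_4$ --- so the augmented graph $G''$ has no homomorphism to $C_7$, contains a $C_7$-critical subgraph $G'''$ with $z\in V(G''')$ and $p(G''')\leq 2$, and replacing $z$ by $S_a\cup S_b$ yields a subgraph $F$ with $p(F)\leq 10$ and $u\notin V(F)$, contradicting Lemma~\ref{potentials7}. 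To repair your proof you would need either to justify $\deg(a)=\deg(b)=3$ (which does not appear to follow from the established structure) or to replace the Lemma-\ref{3str}-on-$a,b$ argument with a homomorphism-extension analysis of the kind the paper uses.
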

\begin{proof}
Suppose not. Let $u$ be the poor vertex with that shares a short string with $v$ during Step 5. Suppose $\textrm{wt}(u) = 6$. Then $ch_4(u) = -1$, and so \begin{align*}
ch_5(v) &= ch_4(v)-ch_4(u) \\
&=15\deg(v)-2\textrm{wt}(v)-34-1 \\
&= 45-10-34-1 \\
&=0, \textnormal{ a contradiction.}
\end{align*} We may therefore assume $u$ has weight at least seven. Note since $ch_4(u) < 0$, $u$ is not contained in a cell by the discharging rules. By Lemma \ref{4str}, it follows that $u$ is not of type (4,2,2), (4,3,0), or (4,2,1). Thus we may assume $u$ is a vertex of type either (3,3,1), or (3,2,2). By Lemma \ref{3,2,2s}, if $u$ is a (3,2,2)-vertex it has charge at least 0 by either Step 1, 2, or 3. 

Therefore we may assume $u$ is a vertex of type (3,3,1), and hence the string shared by $u$ and $v$ is a 1-string. Note $v$ is not incident with a 4-string as otherwise it is contained in a cell by Lemma \ref{deg4s} and so $ch_2(u) \geq 0$, a contradiction. Thus since $v$ has degree 3 and weight 5 and is incident with a 1-string, it is either a vertex of type (3,1,1) or of type (2,2,1). 

First suppose $v$ is of type $(3,1,1)$.  Let $S_1$ and $S_2$ be the two 1-strings incident with $v$, named such that $S_1$ is shared by $u$ and $v$. Let $S_3$ be the 3-string incident with $v$. Let the two 3-strings incident with $u$ be named $S_4$ and $S_5$. Note $S_4$ and $S_5$ do not have the same two endpoints by Lemma \ref{par-str}. Furthermore $v$ and $u$ share only $S_1$ by Lemma \ref{par-str}.  By Lemma \ref{3str} applied to $u$ and $S_4$, since $u$ is not contained in a cell $S_1 \cup S_5$ is contained in a 9-cycle $C$. Note since $v$ has degree 3, it follows that either $S_2$ or $S_3$ is contained in $C$. Since $v(S_1 \cup S_5 \cup S_3) = 11$, we may assume that $S_2 \subset C$. Similarly, by Lemma \ref{3str} applied to $u$ and $S_5$, we have that $S_4 \cup S_1$ is contained in a 9-cycle $C'$. Note since $v$ has degree 3, it follows that either $S_2$ or $S_3$ is contained in $C'$. Since $v(S_1 \cup S_4 \cup S_3) = 11$, it follows that $S_2 \subset C'$.  Since $S_1 \cup S_2 \subset C \cap C'$, this contradicts Lemma \ref{9cycles}.

Therefore we may assume $v$ is a $(2,2,1)$-vertex. Let $a$ and  $b$ be the two vertices that share a 2-string with $v$. Note $a \neq b$ by Lemma \ref{par-str}. Let $S_a$ be the string shared by $a$ and $v$, and let $S_b$ be the 2-string shared by $b$ and $v$. Let $S_1$, $S_2$ and $S_3$ be the three strings incident with $u$, named such that $S_1$ is shared by $u$ and $v$. Note $S_2$ and $S_3$ do not have the same endpoints by Lemma \ref{par-str}. By Lemma \ref{3str} applied to $u$ and $S_2$, since $u$ is not contained in a cell we have that $S_3 \cup S_1$ is contained in a 9-cycle $C$.  Similarly, by applying Lemma \ref{3str} to $u$ and $S_3$, we have that $S_2 \cup S_1$ is contained in a 9-cycle $C'$. Thus without loss of generality, we may assume $S_2$ is shared by $a$ and $u$, and that $S_3$ is shared by $b$ and $u$.

Let $G'$ be the graph obtained from $G$ by deleting $u$, $v$, and all of the internal vertices of  their incident strings. Since $G$ is $C_7$-critical, $G'$ has a homomorphism $\phi$ to a cycle $C = c_1c_2c_3c_4c_5c_6c_7c_1$. Without loss of generality, we may assume $\phi(a) = c_1$ and $\phi(b) \in \{c_1, c_2, c_3, c_4\}$. Since $\phi$ does not extend to $G$, $\phi(b) = c_4$. (To see the extensions of all other homomorphisms to $G$, see Figure \ref{fig:deg3wt5}.) Let $G''$ be the graph obtained from $G'$ by adding a new vertex $z$ and edges $az$ and $bz$. Note now the following: \\

\noindent{
\textbf{Claim 1.} \emph{There does not exist a homomorphism $\phi: G'' \rightarrow C$ with $\phi(a) = c_1$ and $\phi(b) = c_4$}.
}
\begin{proof}
Suppose $\phi : G'' \rightarrow C$ is such that $\phi(a) = c_1$. Note $\phi(b) \in B_\phi(b |a, azb)$. But $B_\phi(b | a, azb) = N_C(N_C(c_1)) = \{c_1, c_3, c_6\}$.
\end{proof}

Thus if $G''$ admits a homomorphism to $C$, this homomorphism extends to a homomorphism of $G$ to $C$, since by Claim 1 there does not exist a homomorphism $\phi$  from $G''$ to $C$ with $\phi(a)= c_1$ and $\phi(b)=c_4$. We may thus assume $G''$ contains a $C_7$-critical subgraph $G'''$, and since $G''' \not \subset G$, we have $z \in V(G''')$. Furthermore, since $G'''$ has minimum degree at least two, $\{az, zb\} \in E(G''')$. 

Suppose $G'''$ is a triangle. Then $ab$ is an edge in $E(G)$. But then $abS_aS_b$ is a cell $C''$ with $S_a \subset C'' \cap C'$, contradicting Lemma \ref{7and9cycles}. Suppose now $G'''$ is a 5-cycle. Then there exists an $(a,b)$-path $P$ of length 3. But then $P\cup S_a \cup S_b$ is a 9-cycle $C''$ with $S_a \subset C'' \cap C'$, contradicting Lemma \ref{9cycles}.

Thus we may assume that $G'''$ is not a triangle or 5-cycle. Since $v(G''') < v(G)$, it follows that $G'''$ is not a counterexample to Theorem \ref{main}, and so $p(G''') \leq 2$. Let $F$ be the graph obtained from $G'''$ by deleting $z$ and adding $S_a \cup S_b$. Then $p(F) = p(G''') + 17(4)-15(4) \leq 10$. But since $u$ is a vertex of degree 3 and $u \not \in V(F)$, this contradicts Lemma \ref{potentials7}.
\end{proof}
\begin{figure}
	\begin{center}
        
            \begin{tikzpicture}[scale=0.75]	
			\tikzset{black node/.style={shape=circle,draw=black,fill=black,inner sep=0pt, minimum size=11pt}}
            \tikzset{white node/.style={shape=circle,draw=black,fill=white,inner sep=0pt, minimum size=11pt}}
			\tikzset{invisible node/.style={shape=circle,draw=black,fill=black,inner sep=0pt, minimum size=1pt}}
				\tikzset{edge/.style={black, line width=.4mm}}		

                \node[black node] (1) at (0,0){\color{white}$c_4$};
                \node[black node] (2) at (0,1){\color{white}$c_5$};
                \node[black node] (3) at (0,2){\color{white}$c_4$};                
                \node[black node] (4) at (1.5,1.6){\color{white}$c_5$};                
                \node[black node] (5) at (2.1,1.3){\color{white}$c_6$};                
                \node[black node] (6) at (2.6,1){\color{white}$c_7$};                
                \node[white node] (7) at (4.0,0){\color{black}$c_1$};                
                \node[black node] (8) at (2.5,0){\color{white}$c_2$};                
                \node[black node] (9) at (2,0){\color{white}$c_3$};                
                \node[black node] (10) at (-1.5,1.6){\color{white}$c_5$};                
                \node[black node] (11) at (-2.1,1.3){\color{white}$c_6$};                
                \node[black node] (12) at (-2.6,1){\color{white}$c_7$};                
                \node[white node] (13) at (-4.0,0){\color{black}$c_1$};               
                \node[black node] (14) at (-2.5,0){\color{white}$c_2$};                
                \node[black node] (15) at (-2,0){\color{white}$c_3$};
                \node[invisible node] (16) at (4.5, -0.2){};
                \node[invisible node] (17) at (-4.5, -0.2){};

                \draw[edge] (1)--(2);
                \draw[edge] (2)--(3);
                \draw[edge] (3)--(4);
                \draw[edge] (4)--(5);
                \draw[edge] (5)--(6);
                \draw[edge] (6)--(7);
                \draw[edge] (7)--(8);
                \draw[edge] (8)--(9);
                \draw[edge] (9)--(1);
                \draw[edge] (3)--(10);
                \draw[edge] (10)--(11);
                \draw[edge] (11)--(12); 
                \draw[edge] (12)--(13);
                \draw[edge] (13)--(14);
                \draw[edge] (14)--(15); 
                \draw[edge] (15)--(1);
                \draw[edge] (10)--(11);
                \draw[edge] (11)--(12);   
                \draw[edge] (7)--(16);
                \draw[edge] (13)--(17);
                \node[] at (0,2.5) {$u$};
                \node[] at (0,-0.5) {$v$};
                \node[] at (-4.5,0.2) {$a$};   
                \node[] at (4.5,0.2) {$b$};                 
			\end{tikzpicture}
            %\vskip 6mm
             \begin{tikzpicture}[scale=0.75]	
			\tikzset{black node/.style={shape=circle,draw=black,fill=black,inner sep=0pt, minimum size=11pt}}
            \tikzset{white node/.style={shape=circle,draw=black,fill=white,inner sep=0pt, minimum size=11pt}}
			\tikzset{invisible node/.style={shape=circle,draw=black,fill=black,inner sep=0pt, minimum size=1pt}}
				\tikzset{edge/.style={black, line width=.4mm}}		

                \node[black node] (1) at (0,0){\color{white}$c_5$};
                \node[black node] (2) at (0,1){\color{white}$c_6$};
                \node[black node] (3) at (0,2){\color{white}$c_5$};                
                \node[black node] (4) at (1.5,1.6){\color{white}$c_6$};                
                \node[black node] (5) at (2.1,1.3){\color{white}$c_7$};                
                \node[black node] (6) at (2.6,1){\color{white}$c_1$};                
                \node[white node] (7) at (4.0,0){\color{black}$c_2$};                
                \node[black node] (8) at (2.5,0){\color{white}$c_3$};                
                \node[black node] (9) at (2,0){\color{white}$c_4$};                
                \node[black node] (10) at (-1.5,1.6){\color{white}$c_4$};                
                \node[black node] (11) at (-2.1,1.3){\color{white}$c_3$};                
                \node[black node] (12) at (-2.6,1){\color{white}$c_2$};                
                \node[white node] (13) at (-4.0,0){\color{black}$c_1$};               
                \node[black node] (14) at (-2.5,0){\color{white}$c_7$};                
                \node[black node] (15) at (-2,0){\color{white}$c_6$};
                \node[invisible node] (16) at (4.5, -0.2){};
                \node[invisible node] (17) at (-4.5, -0.2){};

                \draw[edge] (1)--(2);
                \draw[edge] (2)--(3);
                \draw[edge] (3)--(4);
                \draw[edge] (4)--(5);
                \draw[edge] (5)--(6);
                \draw[edge] (6)--(7);
                \draw[edge] (7)--(8);
                \draw[edge] (8)--(9);
                \draw[edge] (9)--(1);
                \draw[edge] (3)--(10);
                \draw[edge] (10)--(11);
                \draw[edge] (11)--(12); 
                \draw[edge] (12)--(13);
                \draw[edge] (13)--(14);
                \draw[edge] (14)--(15); 
                \draw[edge] (15)--(1);
                \draw[edge] (10)--(11);
                \draw[edge] (11)--(12);   
                \draw[edge] (7)--(16);
                \draw[edge] (13)--(17);
                \node[] at (0,2.5) {$u$};
                \node[] at (0,-0.5) {$v$};
                \node[] at (-4.5,0.2) {$a$};   
                \node[] at (4.5,0.2) {$b$};
			\end{tikzpicture}
            \vskip 6mm
             \begin{tikzpicture}[scale=0.75]	
			\tikzset{black node/.style={shape=circle,draw=black,fill=black,inner sep=0pt, minimum size=11pt}}
            \tikzset{white node/.style={shape=circle,draw=black,fill=white,inner sep=0pt, minimum size=11pt}}
			\tikzset{invisible node/.style={shape=circle,draw=black,fill=black,inner sep=0pt, minimum size=1pt}}
				\tikzset{edge/.style={black, line width=.4mm}}		

                \node[black node] (1) at (0,0){\color{white}$c_7$};
                \node[black node] (2) at (0,1){\color{white}$c_6$};
                \node[black node] (3) at (0,2){\color{white}$c_5$};                
                \node[black node] (4) at (1.5,1.6){\color{white}$c_4$};                
                \node[black node] (5) at (2.1,1.3){\color{white}$c_3$};                
                \node[black node] (6) at (2.6,1){\color{white}$c_2$};                
                \node[white node] (7) at (4.0,0){\color{black}$c_3$};                
                \node[black node] (8) at (2.5,0){\color{white}$c_2$};                
                \node[black node] (9) at (2,0){\color{white}$c_1$};                
                \node[black node] (10) at (-1.5,1.6){\color{white}$c_4$};                
                \node[black node] (11) at (-2.1,1.3){\color{white}$c_3$};                
                \node[black node] (12) at (-2.6,1){\color{white}$c_2$};                
                \node[white node] (13) at (-4.0,0){\color{black}$c_1$};               
                \node[black node] (14) at (-2.5,0){\color{white}$c_2$};                
                \node[black node] (15) at (-2,0){\color{white}$c_1$};
                \node[invisible node] (16) at (4.5, -0.2){};
                \node[invisible node] (17) at (-4.5, -0.2){};

                \draw[edge] (1)--(2);
                \draw[edge] (2)--(3);
                \draw[edge] (3)--(4);
                \draw[edge] (4)--(5);
                \draw[edge] (5)--(6);
                \draw[edge] (6)--(7);
                \draw[edge] (7)--(8);
                \draw[edge] (8)--(9);
                \draw[edge] (9)--(1);
                \draw[edge] (3)--(10);
                \draw[edge] (10)--(11);
                \draw[edge] (11)--(12); 
                \draw[edge] (12)--(13);
                \draw[edge] (13)--(14);
                \draw[edge] (14)--(15); 
                \draw[edge] (15)--(1);
                \draw[edge] (10)--(11);
                \draw[edge] (11)--(12);   
                \draw[edge] (7)--(16);
                \draw[edge] (13)--(17);
                \node[] at (0,2.5) {$u$};
                \node[] at (0,-0.5) {$v$};
                \node[] at (-4.5,0.2) {$a$};   
                \node[] at (4.5,0.2) {$b$};              
			\end{tikzpicture}

    \end{center}
	\caption{Figure for Claim \ref{Step5}. Extensions of $\phi$ to $G$. The white vertices are of unknown degree, though their degree is at least that shown. The black vertices' degrees are as illustrated.}
	\label{fig:deg3wt5}
\end{figure}
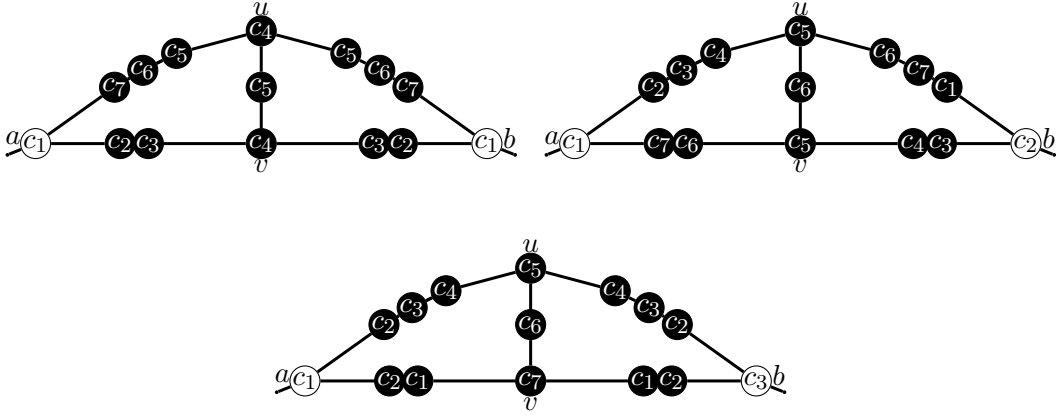

\subsection{All Poor Structures Receive Charge}\label{getpaid}
We have shown that no new poor vertices or cells are created through discharging. Since no cell is initially poor, it remains to show only the following lemma.

\begin{lemma}\label{getmoneygetpaid}
Let $v \in V(G)$ have $ch_0(v) < 0$. At the end of Step 5, $ch_5(v) \geq 0$. 
\end{lemma}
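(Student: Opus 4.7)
The plan is a case analysis on the type of a poor vertex $v$, in each case exhibiting a short string incident with $v$ whose other endpoint $u$ triggers one of the payment rules R2--R5, raising $v$'s charge to $0$.

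First, if $v$ lies in a cell $C$, then by R1, $v$ sends all of its (negative) charge to $C$ and $ch_1(v) = 0$, so we may assume $v$ is not in any cell. By the discussion preceding Step 1 and Lemmas \ref{4str}, \ref{(4,4,k)}, \ref{4,3,k}, and \ref{3,3,2}, the type of $v$ is then one of $(3,3,1)$, $(3,2,2)$, $(3,2,1)$, $(3,3,0)$. It then suffices to find a single short-string endpoint $u$ of $v$ satisfying one of: $u$ lies in a cell (so R2 pays $-ch_1(v)$); $\deg(u)\geq 4$ (so R3 pays $-ch_2(v)$); $\deg(u)=3$ and $\textnormal{wt}(u)\leq 4$ (so R4 pays $-ch_3(v)$); or $\deg(u)=3$, $\textnormal{wt}(u)=5$, and $v$ is the unique poor short-string neighbor of $u$ (so R5 pays $-ch_4(v)$). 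Any such payment brings $v$ to charge zero, after which no further rule fires for $v$.

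For type $(3,2,2)$, with short-string endpoints $a,b$: if either has degree $\geq 4$ then R3 pays $v$; otherwise both have degree $3$, and Lemma \ref{3,2,2s} forces one of $a,b$ into a cell, so R2 pays $v$. For type $(3,2,1)$, let $a$ and $b$ be the endpoints of the 2-string and 1-string respectively; after reducing to the case where both have degree $3$ and neither lies in a cell (else R2 or R3 applies), Lemma \ref{3str} applied to $v$ with $S_3$ the 3-string places the 2-string $\cup$ 1-string in a common 9-cycle. A structural analysis of $a$ and $b$ within this 9-cycle, constrained by Lemmas \ref{9cycles} and \ref{7and9cycles}, supplemented by an identification-and-potential argument in the spirit of Lemma \ref{3,2,2s} to rule out the residual case where both $a$ and $b$ have weight $\geq 5$ with $v$ not their unique poor neighbor, contradicts Lemma \ref{potentials7}.

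For the single-short-string types $(3,3,0)$ and $(3,3,1)$, the unique short-string endpoint $u$ must by itself trigger one of R2--R5. For $(3,3,0)$, Lemma \ref{(3,3,0)} rules out $u$ again being of type $(3,3,0)$ not in a cell, and the remaining bad subcases (in which $u$ has degree $3$, is not in a cell, and has weight $\geq 5$ with $v$ not its unique poor short-string neighbor) are eliminated by identifying off-cycle partners of $u$ and comparing the potential of the resulting $C_7$-critical subgraph against Lemma \ref{potentials7}, in the style of Lemmas \ref{Step4} and \ref{Step5}. For $(3,3,1)$, applying Lemma \ref{3str} to both 3-strings incident with $v$ shows $v$ lies in two distinct 9-cycles meeting in the 1-string $uv$, and an analogous identification argument handles the bad configurations of $u$. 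The main obstacle is precisely these single-short-string cases: with only one short-string endpoint to work with, blocking R2--R5 at that endpoint leaves no alternative rule to invoke, so the proof must construct an explicit $C_7$-critical minor whose unfolding contradicts the potential bounds of Lemma \ref{potentials7}.
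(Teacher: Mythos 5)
Your case list omits type $(2,2,2)$, which is a genuine gap. A degree-$3$ vertex of type $(2,2,2)$ has weight $6$ and initial charge $ch_0(v) = 45 - 12 - 34 = -1 < 0$, is not incident with a $4$-string, and is not excluded by Lemmas \ref{(4,4,k)}, \ref{4,3,k}, or \ref{3,3,2}; so it is a possible poor type that your analysis never addresses. (The paper's own prose list preceding the discharging rules also skips $(2,2,2)$, which may be what misled you, but the paper's proof handles it explicitly via Lemma \ref{2,2,2s}.) Your argument for $(3,2,2)$ via Lemma \ref{3,2,2s} does not transfer: that lemma's hypothesis requires a $4$-string incident with $v$, which $(2,2,2)$ lacks, so you need the separate hypothesis and conclusion of Lemma \ref{2,2,2s}.

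Beyond the missing case, your sketches for $(3,3,0)$, $(3,3,1)$, and $(3,2,1)$ mischaracterize how the paper closes these subcases. For $(3,3,1)$ and $(3,3,0)$ the paper's proof is not an ``identification-and-potential'' argument: $(3,3,1)$ is killed by a purely structural analysis of the two $9$-cycles through the $1$-string (forcing $\textrm{wt}(u) = 5$ and then deriving a contradiction to Lemma \ref{9cycles}), and $(3,3,0)$ is killed by a bootstrap: after Lemma \ref{(3,3,0)} eliminates the $(3,3,0)$-on-$(3,3,0)$ configuration, the remaining candidate for $u$ is type $(3,2,0)$, and the poor vertex $w$ sharing $u$'s $2$-string is forced to be one of the types $(3,2,2)$, $(3,2,1)$, $(2,2,2)$ already excluded earlier in the same proof. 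That sequencing---where the $(3,3,0)$ argument relies on the earlier claims having been established for $w$---is essential and absent from your outline, and it is another place where the missing $(2,2,2)$ case bites, since Claim~\ref{(2,2,2)} is one of the facts invoked there. Only in the $(3,2,1)$ case does the paper use a genuine $P_3$-attachment/potential argument of the flavor you describe, and even there it is preceded by substantial structural casework (on whether $S_{ab}$ is a $3$-string, and on $m_a, m_b$) that your one-line summary skips. As written, your proposal would not close these cases.
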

\begin{proof}
Suppose not. Since $ch_0(v) < 0$, we may assume $v$ is a vertex of degree 3 and weight at least 6.  We may assume $v$ is not contained in a cell, as otherwise $v$ receives $-ch_0(v)$ charge from its cell in Step 1, resulting in $ch_5(v) = ch_1(v) =0$. Since $v$ is not in a cell, by Lemma \ref{4str} $v$ is not incident with a 4-string.  It follows that $v$ is not of type $(4,2,2)$, $(4,3,0), (4,2,1), (4,2,0)$, or $(4,1,1)$. We may also assume $v$ does not share a short string with a vertex in a cell or a vertex of degree at least 4, as otherwise $v$ receives charge at least $-ch_0(v)$ in either Step 2 or Step 3, a contradiction. Furthermore, we may assume $v$ does not share a short string with a vertex of degree 3 of weight at most 4, as otherwise $ch_5(v) \geq 0$ by Step 4. Finally, by Step 5 we may assume that if $v$ shares a short string with a vertex $u$ of degree 3 and weight 5, then $v$ is not the only poor vertex that shares a short string with $u$. 

Note $v$ is not a vertex of type $(3,2,2)$. To see this, suppose not. Let $a$ and $b$ be the vertices that share a short string with $v$. Note $a\neq b$ by Lemma \ref{par-str}. Since $ch_5(v) < 0$, neither $a$ nor $b$ is contained in a cell, and both $a$ and $b$ have degree 3.  But by Lemma \ref{3,2,2s}, since $v$ is not contained in a cell and $\deg(a) = \deg(b) = 3$, at least one of $a$ and $b$ is contained in a cell, a contradiction.

Note furthermore the following claim.
\begin{claim} \label{(2,2,2)}
$v$ is not of type $(2,2,2)$.
\end{claim} 
\begin{proof}

Suppose not. By Lemma \ref{2,2,2s}, either $v$ is contained in a cell, $v$ shares a short string with a cell, or $v$ shares a short string with a vertex of degree at least 4. But then $v$ receives charge in either Step 1, Step 2, or Step 3, contradicting that $ch_5(v) < 0$.
\end{proof}

Thus $v$ is either a vertex of type (3,3,0), (3,2,1), or (3,3,1). 

\begin{claim}
$v$ is not of type (3,3,1).
\end{claim} 
\begin{proof}
Suppose not, and let $u$ be the vertex with which $v$ shares its short string. Since $ch_5(v) < 0$, it follows that $u$ is either a vertex of degree 3 and weight at least 6, or a vertex of degree 3 and weight 5 that shares short strings with at least two vertices that are poor after Step 5. Let $A$ be the set of vertices that share a short string with $u$ and that have negative charge after Step 5.

Let $S_1$ and $S_3$ be the 3-strings incident with $v$, and let $S_2$ be the 1-string shared by $u$ and $v$. Let $a$ and $b$ be the other endpoints of the strings $S_1$ and $S_3$, respectively. Note $a \neq b$ by Lemma \ref{par-str}. By Lemma \ref{3str} applied to $v$, since $v$ is not contained in a cell by assumption, $S_2 \cup S_3$ is contained in a 9-cycle $C_1$.  Similarly, $S_2 \cup S_1$ is contained in a 9-cycle $C_2$. Let $S_4 \neq S_2$ and $S_5 \not \in \{S_2, S_4\}$ be the other two strings adjacent with $u$, such that at least one edge from $S_5$ contained in $C_1$. Note $C_2$ does not contain an edge $e$ in $S_5$ as otherwise $eS_2 \subset C_1 \cap C_2$, contradicting Lemma \ref{9cycles}. Since $u$ has degree 3, and the distance from each of $a$ and $b$ to $u$ along $C_2$ and $C_1$, respectively, is at most three, we have that $u$ has weight at most five. Since $\textrm{wt}(u) \geq 5$, we have therefore that $\textrm{wt}(u) = 5$, and so that $S_4$ and $S_5$ are 2-strings with endpoints $a, u$ and $b,u$, respectively. Furthermore, since $|A| \geq 2$, at least one of $a$ and $b$ has negative final charge.  Without loss of generality, we may assume $a \in A$, and so that $a$ has degree 3 and $\textrm{wt}(a) \geq 6$. Let $S_4$ be the 2-string shared with $u$ by $a$. Let $S_6 \not \in \{S_1, S_4\}$ be the third string incident with $a$. Since $a$ has weight at least six, since $S_1$ is a 3-string, and since $S_4$ is a 2-string, we have that $S_6$ is a $k$-string with $k \geq 1$. But then Lemma \ref{3str} applies to $a$, and so $S_6$ and $S_4$ are contained in a 9-cycle $C_3$. Since $S_4 \subset C_3 \cap C_1$, this contradicts Lemma \ref{9cycles}.  We may therefore assume $v$ is not a vertex of type $(3,3,1)$. 
\end{proof}

\begin{claim}\label{(3,2,1)}
 $v$ is not a vertex of type (3,2,1). 
\end{claim} 
\begin{proof}
Suppose not. Let $a$, $b$, and $c$ be the vertices that share a 1-string, 2-string, and 3-string, respectively, with $v$. Let $S_a, S_b$ and $S_c$ be the three strings incident with $v$, such that $S_a$ is incident with $a$, $S_b$ is incident with $b$, and $S_c$ is incident with $c$. Note $a \neq b$ and $a \neq c$ since $G$ has girth at least 7 by Lemma \ref{girth}. Furthermore, $b \neq c$ since $v$ is not contained in a cell by assumption. By Lemma \ref{3str}, $S_b \cup S_a$ is contained in a 9-cycle $C$. Let $S_{ab}$ be the $(a,b)$-path of length 4 in $C$ with $v \not \in V(S_{ab})$.  Since $ch_5(v) < 0$, it follows from the discharging rules that each of $a$ and $b$ has degree 3, is not contained in a cell, and has weight at least 5.

Suppose first that $S_{ab}$ is a 3-string. Since $a$ has weight at least 5, it is incident with a $k$-string $S_d$, with $k \geq 1$. By Lemma \ref{3str}, $S_a \cup S_d$ is contained in a 9-cycle $C' \neq C$. Note $E(S_b) \cap E(C') = \emptyset$, as otherwise $S_a\cup S_b \subset C \cap C'$, contradicting Lemma \ref{9cycles}. Thus we may assume $S_c$ is contained in $C'$, and so $k \leq 2$. 

Suppose first that $k=2$. Let $G' = G \setminus (V(S_a \cup S_b\cup S_c\cup S_d\cup S_{ab}) \setminus \{b, c\})$. Since $G' \subsetneq G$ and $G$ is $C_7$-critical, $G'$ admits a homomorphism $\phi$ to $C = c_1c_2c_3c_4c_5c_6c_7c_1$. Without loss of generality, we may assume $\phi(b) = c_1$ and $\phi(c) \in \{c_1, c_2, c_3, c_4\}$. Note $\phi(c) = c_3$ as otherwise $\phi$ extends to $G$. To see this, see Figure \ref{fig:321s}. Let $G_1 \in P_3(G')$ be the graph obtained from $G'$ by adding a $(b,c)$-path $P$ of length $3$. \\

\noindent{
\textbf{Claim 1.}\emph{ There does not exist a homomorphism $\phi: G_1 \rightarrow C$ with $\phi(b) = c_1$ and $\phi(c) = c_3$.}
}
\begin{proof}
Suppose $\phi : G_1 \rightarrow C$ is such that $\phi(b) = c_1$. Note $\phi(c) \in B_\phi(c |b, P)$. But $B_\phi(c | b, P) = N_C(N_C(N_C(c_1)))$, and $N_C(N_C(N_C(c_1))) = \{c_2, c_7, c_4, c_5\}$.
\end{proof}

By Claim 1, it follows that $G_1$ does not admit a homomorphism to $C_7$. Therefore $G_1$ contains a $C_7$-critical subgraph $G_2$. Since $G_2 \not \subset G$ and $G_2$ has minimum degree 2, $P \subset G_2$.  Note since $b \neq c$, it follows that $G_2$ is not a triangle.  Suppose $G_2$ is a 5-cycle. Then there exists a $(b,c)$-path $Q$ of length 2 in $G$. Let $F = Q \cup S_a \cup S_b \cup S_c \cup S_d \cup S_{ab}$. Note $v(F) = 16$ and $e(F) = 18$, and so it follows that $p(F) = 17(16)-15(18) = 2$. This contradicts Lemma \ref{potentials7}.  We may therefore assume that $G_2$ is not a triangle or 5-cycle. Since $v(G_2) < v(G)$ and $G$ is a minimum counterexample, it follows that $p(G_2) \leq 2$. Let $F$ be the graph obtained from $G_2$ by deleting $P \setminus \{a,b\}$ and adding $S_d \cup S_{ab}$. Then $p(F)  = p(G_2) + 17(4)-15(4) \leq 10$. By Lemma \ref{potentials7}, either $F = G$ or $G \in P_5(F)$. But since $v$ is not contained in $F$ and $\deg(v)=3$, this is a contradiction. 

\begin{figure}
	\begin{center}
            \begin{tikzpicture}[scale=0.75]	
			\tikzset{black node/.style={shape=circle,draw=black,fill=black,inner sep=0pt, minimum size=11pt}}
            \tikzset{white node/.style={shape=circle,draw=black,fill=white,inner sep=0pt, minimum size=11pt}}
			\tikzset{invisible node/.style={shape=circle,draw=black,fill=black,inner sep=0pt, minimum size=1pt}}
				\tikzset{edge/.style={black, line width=.4mm}}		

                \node[black node] (1) at (0,0){\color{white}$c_4$};
                \node[black node] (2) at (0,1){\color{white}$c_3$};
                \node[black node] (3) at (0,2){\color{white}$c_4$};                
                \node[black node] (4) at (1.5,1.6){\color{white}$c_5$};            
                \node[black node] (5) at (2.1,1.3){\color{white}$c_6$};             
                \node[black node] (6) at (2.6,1){\color{white}$c_7$};               
                \node[black node] (7) at (4.0,0){\color{white}$c_1$};               
                \node[black node] (8) at (2.5,0){\color{white}$c_2$};               
                \node[black node] (9) at (1.8,0){\color{white}$c_3$};               
                \node[black node] (10) at (-1.6,1.5){\color{white}$c_3$};           
                \node[black node] (11) at (-2.2,1.2){\color{white}$c_2$};           
                \node[black node] (12) at (-4.0,0){\color{white}$c_1$};                             \node[black node] (13) at (-2.6,0){\color{white}$c_7$};             
                \node[black node] (14) at (-2,0){\color{white}$c_6$};
                \node[black node] (15) at (-1.4,0){\color{white}$c_5$};

                \node[invisible node] (16) at (4.5, -0.2){};
                \node[invisible node] (17) at (-4.5, -0.2){};

                \draw[edge] (1)--(2);
                \draw[edge] (2)--(3);
                \draw[edge] (3)--(4);
                \draw[edge] (4)--(5);
                \draw[edge] (5)--(6);
                \draw[edge] (6)--(7);
                \draw[edge] (7)--(8);
                \draw[edge] (8)--(9);
                \draw[edge] (9)--(1);
                \draw[edge] (3)--(10);
                \draw[edge] (10)--(11);
                \draw[edge] (11)--(12); 
                \draw[edge] (12)--(13);
                \draw[edge] (13)--(14);
                \draw[edge] (14)--(15); 
                \draw[edge] (15)--(1);
                \draw[edge] (10)--(11);
                \draw[edge] (11)--(12);   
                \draw[edge] (7)--(16);
                \draw[edge] (12)--(17);
                \node[] at (0,2.5) {$v$};
                \node[] at (0,-0.5) {$a$};
                \node[] at (-4.5,0.2) {$b$};   
                \node[] at (4.5,0.2) {$c$};
                \node[] at (0.5,0.7) {$S_a$};
                \node[] at (-2.8,1.5) {$S_b$};
                \node[] at (3,1.5) {$S_c$};
			\end{tikzpicture}
            %\vskip 6mm
             \begin{tikzpicture}[scale=0.75]	
			\tikzset{black node/.style={shape=circle,draw=black,fill=black,inner sep=0pt, minimum size=11pt}}
            \tikzset{white node/.style={shape=circle,draw=black,fill=white,inner sep=0pt, minimum size=11pt}}
			\tikzset{invisible node/.style={shape=circle,draw=black,fill=black,inner sep=0pt, minimum size=1pt}}
				\tikzset{edge/.style={black, line width=.4mm}}		

                \node[black node] (1) at (0,0){\color{white}$c_6$};
                \node[black node] (2) at (0,1){\color{white}$c_5$};
                \node[black node] (3) at (0,2){\color{white}$c_4$};                
                \node[black node] (4) at (1.5,1.6){\color{white}$c_3$};            
                \node[black node] (5) at (2.1,1.3){\color{white}$c_2$};             
                \node[black node] (6) at (2.6,1){\color{white}$c_1$};               
                \node[black node] (7) at (4.0,0){\color{white}$c_2$};               
                \node[black node] (8) at (2.5,0){\color{white}$c_1$};               
                \node[white node] (9) at (1.8,0){\color{black}$c_7$};               
                \node[black node] (10) at (-1.6,1.5){\color{white}$c_3$};           
                \node[black node] (11) at (-2.2,1.2){\color{white}$c_2$};           
                \node[black node] (12) at (-4.0,0){\color{white}$c_1$};                             \node[black node] (13) at (-2.6,0){\color{white}$c_7$};             
                \node[black node] (14) at (-2,0){\color{white}$c_1$};
                \node[black node] (15) at (-1.4,0){\color{white}$c_7$};

                \node[invisible node] (16) at (4.5, -0.2){};
                \node[invisible node] (17) at (-4.5, -0.2){};

                \draw[edge] (1)--(2);
                \draw[edge] (2)--(3);
                \draw[edge] (3)--(4);
                \draw[edge] (4)--(5);
                \draw[edge] (5)--(6);
                \draw[edge] (6)--(7);
                \draw[edge] (7)--(8);
                \draw[edge] (8)--(9);
                \draw[edge] (9)--(1);
                \draw[edge] (3)--(10);
                \draw[edge] (10)--(11);
                \draw[edge] (11)--(12); 
                \draw[edge] (12)--(13);
                \draw[edge] (13)--(14);
                \draw[edge] (14)--(15); 
                \draw[edge] (15)--(1);
                \draw[edge] (10)--(11);
                \draw[edge] (11)--(12);   
                \draw[edge] (7)--(16);
                \draw[edge] (12)--(17);
                \node[] at (0,2.5) {$v$};
                \node[] at (0,-0.5) {$a$};
                \node[] at (-4.5,0.2) {$b$};   
                \node[] at (4.5,0.2) {$c$};
                \node[] at (0.5,0.7) {$S_a$};
                \node[] at (-2.8,1.5) {$S_b$};
                \node[] at (3,1.5) {$S_c$};
			\end{tikzpicture}
            \vskip 6mm
            \begin{tikzpicture}[scale=0.75]	
			\tikzset{black node/.style={shape=circle,draw=black,fill=black,inner sep=0pt, minimum size=11pt}}
            \tikzset{white node/.style={shape=circle,draw=black,fill=white,inner sep=0pt, minimum size=11pt}}
			\tikzset{invisible node/.style={shape=circle,draw=black,fill=black,inner sep=0pt, minimum size=1pt}}
				\tikzset{edge/.style={black, line width=.4mm}}		

                \node[black node] (1) at (0,0){\color{white}$c_5$};
                \node[black node] (2) at (0,1){\color{white}$c_6$};
                \node[black node] (3) at (0,2){\color{white}$c_7$};                
                \node[black node] (4) at (1.5,1.6){\color{white}$c_1$};            
                \node[black node] (5) at (2.1,1.3){\color{white}$c_2$};             
                \node[black node] (6) at (2.6,1){\color{white}$c_3$};               
                \node[black node] (7) at (4.0,0){\color{white}$c_4$};               
                \node[black node] (8) at (2.5,0){\color{white}$c_5$};               
                \node[black node] (9) at (1.8,0){\color{white}$c_4$};               
                \node[black node] (10) at (-1.6,1.5){\color{white}$c_1$};           
                \node[black node] (11) at (-2.2,1.2){\color{white}$c_7$};           
                \node[black node] (12) at (-4.0,0){\color{white}$c_1$};                             \node[black node] (13) at (-2.6,0){\color{white}$c_2$};             
                \node[black node] (14) at (-2,0){\color{white}$c_3$};
                \node[black node] (15) at (-1.4,0){\color{white}$c_4$};

                \node[invisible node] (16) at (4.5, -0.2){};
                \node[invisible node] (17) at (-4.5, -0.2){};

                \draw[edge] (1)--(2);
                \draw[edge] (2)--(3);
                \draw[edge] (3)--(4);
                \draw[edge] (4)--(5);
                \draw[edge] (5)--(6);
                \draw[edge] (6)--(7);
                \draw[edge] (7)--(8);
                \draw[edge] (8)--(9);
                \draw[edge] (9)--(1);
                \draw[edge] (3)--(10);
                \draw[edge] (10)--(11);
                \draw[edge] (11)--(12); 
                \draw[edge] (12)--(13);
                \draw[edge] (13)--(14);
                \draw[edge] (14)--(15); 
                \draw[edge] (15)--(1);
                \draw[edge] (10)--(11);
                \draw[edge] (11)--(12);   
                \draw[edge] (7)--(16);
                \draw[edge] (12)--(17);
                \node[] at (0,2.5) {$v$};
                \node[] at (0,-0.5) {$a$};
                \node[] at (-4.5,0.2) {$b$};   
                \node[] at (4.5,0.2) {$c$};
                \node[] at (0.5,0.7) {$S_a$};
                \node[] at (-2.8,1.5) {$S_b$};
                \node[] at (3,1.5) {$S_c$};
			\end{tikzpicture}
    \end{center}
	\caption{Figure for Claim \ref{(3,2,1)}. Extensions of $\phi$ to $G$. }
	\label{fig:321s}
\end{figure}

We may therefore assume that $k=1$. Since $a$ is not contained in a cell, $a$ is not incident with a 4-string by Lemma \ref{4str}. Since $a$ is incident with two 1-strings and has weight at least 5, it follows that $a$ is a vertex of type (3,1,1). Note since $a$ has weight 5, it follows from the discharging rules that $a$ shares each of its short strings with a vertex that is poor immediately after Step 4. Otherwise $a$ sends charge to $v$ in Step 5. Let $d \neq a$ be an endpoint of $S_d$. Note $d$ is adjacent to $c$ which has degree at least 3 since it is the endpoint of a string. Thus $d$ is adjacent to a 0-string and a 1-string. Since $ch_4(d) < 0$, it follows that $d$ has degree 3 and weight at least 6. But then $d$ is adjacent to a $k$-string with $k \geq 5$, contradicting Lemma \ref{max-str}.

We may therefore assume $S_{ab}$ is not a 3-string. But then $S_{ab}$ contributes at most 2 to $\textrm{wt}(a) + \textrm{wt}(b)$. Let $S_1$ be the third string incident with $a$, with $S_1 \not \subset S_{ab}$ and $S_1 \neq S_a$. Similarly, let $S_2$ be the third string incident with $b$, with $S_2 \not \subset S_{ab}$ and $S_2 \neq S_b$. Let $m_a$ and $m_b$ be integers chosen such that $S_1$ is an $m_a$-string, and $S_2$ is an $m_b$-string. Since $a$ is incident with a 1-string $S_a$ and $b$ is incident with a 2-string $S_b$, we have $\textrm{wt}(a) + \textrm{wt}(b) \leq 2 + 2 + 1 + m_a + m_b$. Since each of $a$ and $b$ has weight at least 5, it follows that $m_a + m_b \geq 5$. Hence at least one of $m_a$ and $m_b$ is at least three. Suppose first $m_b \geq 3$. Note $m_b \leq 3$, since otherwise by Lemma \ref{4str} $b$ is contained in a cell, contrary to assumption.

First suppose $b$ is a vertex of type $(3,2,0)$. Then since $b$ has weight 5, it shares each of its short strings with a poor vertex as otherwise $b$ sends charge to $v$ in Step 5. Thus $b$ shares its 0-string with a vertex $w$ of degree 3 and weight at least six. Note $w \in S_{ab}$, and $S_{ab}$ contributes at most 2 to the weight of $w$ by assumption. But then since $w$ has weight at least 6, it is the endpoint of a 4-string and so is contained in a cell. This is a contradiction, as vertices contained in cells are not poor after Step 1. 

We may therefore assume that $b$ is either of type $(3,2,1)$ or of type $(3,2,2)$.  But then $S_{ab}$ contributes at most 1 to the weight of $a$. Since $a$ has weight at least 5 and is not contained in a cell, we have that $m_a = 3$, and $S_{ab}$ contributes 1 to the weight of each of $a$ and $b$. Note $\textrm{wt}(a) = 5$. By assumption, $a$ shares both its short strings with vertices that are poor after Step 4. Let $w' \in S_{ab}$ be the vertex of degree at least 3 that shares a 1-string with each of $a$ and $b$. Since $w'$ is poor after Step 4, it has degree 3 and weight at least 6. But then $w'$ is incident with a 4-string, and so by Lemma \ref{4str} it is contained in a cell. This is a contradiction, as vertices contained in cells are not poor after Step 1. 

Thus we may assume $m_b \leq 2$, and so $m_a \geq 3$. Since $a$ is not contained in a cell, $a$ is not incident with a 4-string by Lemma \ref{4str}. Thus $m_a = 3$. Note since $\textrm{wt}(a) \geq 5$, we have that $S_{ab}$ contributes at least 1 to the weight of $a$. Thus $a$ is either of type $(3,1,1)$ or (3,2,1).

Suppose first $a$ is of type (3,1,1).  By the discharging rules, since $ch_5(v) < 0$ it follows that $a$ shares a 1-string with a vertex $w'' \in V(S_{ab})$ such that $ch_4(w'') < 0$. But then $\textrm{wt}(w'') \geq 6$ and $w''$ has degree 3. Since $S_{ab}$ contributes at most 2 to the weight of $w''$, it follows that $w''$ is incident with an $r$-string with $r \geq 4$.  But this is a contradiction, as by Lemma \ref{4str} vertices of degree 3 incident with 4-strings are contained in cells. 

Thus we may assume $a$ is of type (3,2,1). But then $S_{ab}$ contributes 0 to the weight of $b$. Since $b$ is not contained in a cell, $b$ is not incident with a 4-string by Lemma \ref{4str}. Thus it follows that since $\textrm{wt}(b) \geq 5$, $b$ is of type $(3,2,0)$. Note since $ch_5(v) < 0$ and $\textrm{wt}(b) = 5$, it follows from Rule 5 that $b$ shares its 0-string with a vertex $w^*$ of degree 3 and weight at least 6, such that $ch_4(w^*)<0$.  Thus $w^*$ is not contained in a cell. But since $S_{ab}$ contributes at most 2 to the weight of $w^*$ and $w^*$ has weight at least 6, it follows that $w^*$ is incident with a 4-string. By Lemma \ref{4str}, $w^*$ is contained in a cell \textemdash a contradiction.
\end{proof}

The only remaining possibility is then that $v$ is a vertex of type (3,3,0). Let $u$ be the vertex that shares a 0-string with $v$. Since $ch_5(v) < 0$, we have that $u$ has degree 3 and weight at least 5. Note $u$ is not incident with a $4$-string as otherwise by Lemma \ref{4str} $v$ is contained in a cell, a contradiction. Since $u$ is incident with a 0-string, it follows that $u$ is either of type (3,3,0) or (3,2,0). Suppose first $u$ is of type (3,2,0). Since $ch_5(v) < 0$, and $\textrm{wt}(u)=5$, it follows that $u$ shares its incident 2-string with another vertex $w$ with $ch_4(w) < 0$. Otherwise, $u$ shares only one short string with a vertex that is poor after Step 4: thus $u$ sends $-ch_4(v)$ to $v$ in Step 5, contradicting that $ch_5(v) < 0$.  Since $w$ is not contained in a cell, by Lemma \ref{4str} $w$ is not incident with a 4-string. It follows that $w$ is not of type $(4,2,2)$, $(4,2,1),$ or $(4,2,0)$.  Furthermore, by Lemma \ref{3,2,2s}, $w$ is not a vertex of type $(3,2,2)$. Since $w$ is the endpoint of a $2$-string, it is thus of type (3,2,1) or (2,2,2). But by Claim \ref{(3,2,1)}, if $w$ is of type (3,2,1) then $ch_5(w) \geq 0$. Similarly, by Claim \ref{(2,2,2)} $w$ is not of type (2,2,2). 

Therefore we may assume $u$ is of type $(3,3,0)$. But then by Lemma  \ref{(3,3,0)}, $u$ is contained in a cell $C$. Since $v$ receives charge from $C$ in Step 2, $ch_5(v) \geq 0$ \textemdash a contradiction.
\end{proof}

We now prove our main theorem.

\begin{proof}[Proof of Theorem \ref{main}]
Suppose not. Let $G$ be a minimum counterexample. At the end of the discharging process described above, all structures in $G$ have non-negative charge. But as shown in Equation (\ref{threshhold}), the charge carried by the graph is at most $-6$. Since the total charge did not change throughout the discharging process, this is a contradiction.
\end{proof}

%======================================================================
%-------------------------FUTURE WORK----------------------------------
%======================================================================
%======================================================================
%-------------------------OPEN QUESTIONS-------------------------------
%======================================================================

\begin{section}{Open Questions}

A natural question to wonder is whether or not the density bound obtained in Theorem \ref{main} is best possible. We suspect not. Kostochka and Yancey \cite{kostochka2014ore} showed that if $G$ is $k$-critical and $k \geq 4$, then $e(G) \geq (\frac{k}{2}- \frac{1}{k-1})v(G) - \frac{k(k-3)}{2(k-1)}$.  Later, they showed this is tight for graphs\footnote{They showed further that this bound is tight \emph{only} for the graphs obtained via Ore's construction.} obtained via a construction given by Ore in \cite{ore2011four}. A $k$-critical graph given by Ore's construction is called a $k$-\emph{Ore graph}.

Given a $(2t+2)$-critical graph, there is a seemingly natural way to obtain a $C_{2t+1}$-critical graph by edge subdivisions. Indeed, we have the following:

\begin{restatable}{prop}{orecons}\label{orecons}
If $G$ is a $(2t+2)$-critical graph, then the graph $G'$ obtained from $G$ by subdividing every edge $(2t-2)$ times is $C_{2t+1}$-critical. 
\end{restatable}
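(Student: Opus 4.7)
The plan is to reduce both criticality conditions for $G'$ to one walk-counting lemma in $C_{2t+1}$: for any fixed vertex $a \in V(C_{2t+1})$, the set of endpoints of walks of length exactly $2t-1$ starting at $a$ is $V(C_{2t+1}) \setminus \{a\}$. To verify this, parameterize such a walk by the number $i \in \{0,1,\dots,2t-1\}$ of its forward steps; the endpoint is $a + (2i - (2t-1)) \pmod{2t+1}$. The displacement $2i-(2t-1)$ runs through the $2t$ values $-(2t-1), -(2t-3), \dots, 2t-3, 2t-1$, and these are pairwise distinct modulo $2t+1$ because $2t+1$ is odd and doubling is a bijection on $\mathbb{Z}/(2t+1)$. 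Reducing modulo $2t+1$ yields exactly $\{1,2,\dots,2t\}$, which misses only $0$.

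With this lemma, I first show $G'$ admits no homomorphism to $C_{2t+1}$. Suppose for contradiction $\phi \colon G' \to C_{2t+1}$ is one. For each edge $e = uv \in E(G)$, the subdivided path $P_e$ of length $2t-1$ in $G'$ maps under $\phi$ to a walk of length $2t-1$ in $C_{2t+1}$ from $\phi(u)$ to $\phi(v)$, so the lemma forces $\phi(u) \neq \phi(v)$. Hence $\phi\restriction_{V(G)}$ is a proper $(2t+1)$-colouring of $G$, contradicting $\chi(G) = 2t+2$.

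For the criticality half, let $H \subsetneq G'$. Define a subgraph $H_0$ of $G$ by $V(H_0) = V(H) \cap V(G)$ and $E(H_0) = F := \{e \in E(G) : P_e \subseteq H\}$. If both $V(H_0) = V(G)$ and $F = E(G)$ held, then every $P_e$ and every vertex of $G$ would lie in $H$, forcing $H = G'$; so $H_0 \subsetneq G$, and by criticality of $G$ there is a proper $(2t+1)$-colouring $c \colon V(H_0) \to V(C_{2t+1})$. I extend $c$ to a homomorphism $\phi \colon H \to C_{2t+1}$: set $\phi = c$ on $V(H) \cap V(G)$; for each $e = uv \in F$, use the walk lemma (applicable because $c(u) \neq c(v)$) to pick a walk of length $2t-1$ from $c(u)$ to $c(v)$ in $C_{2t+1}$ and label $P_e$ along this walk; for each $e = uv \in E(G) \setminus F$, the intersection $P_e \cap H$ is a disjoint union of sub-paths, each with at most one already-labelled endpoint, and any such sub-path extends to a walk in $C_{2t+1}$ starting from the given image (or from an arbitrary vertex if no endpoint is labelled), because every path homomorphs into $C_{2t+1}$ once a starting image is chosen. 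The resulting $\phi$ is a homomorphism since every edge of $H$ lies inside some $P_e$ on which $\phi$ was defined as a walk.

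The main obstacle is the walk lemma, which however reduces to the short modular-arithmetic calculation above. The remaining bookkeeping---particularly handling the broken sub-paths that arise when $e \notin F$---is routine, since those sub-paths have at most one preassigned endpoint and so impose no further constraint.
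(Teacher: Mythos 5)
Your proof is correct and follows exactly the route the paper sketches: the walk-counting lemma you prove is the paper's ``key observation'' (that a subdivided edge of length $2t-1$ forbids precisely equal endpoint images, just like an ordinary edge), and both directions of criticality then reduce to translating between proper $(2t+1)$-colourings of subgraphs of $G$ and homomorphisms of subgraphs of $G'$. The paper omits the bookkeeping for the ``every proper subgraph maps to $C_{2t+1}$'' half, but your handling of the partially-present subdivision paths $P_e$ with $e \notin F$ is the natural way to complete it and is correct since each such component has at most one preassigned endpoint.
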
 

The proof of this proposition is omitted, but the key observation is the following: for each edge $uv \in E(G)$, let $P_{uv} \subset G'$ be the $(u,v)$-path obtained by subdividing $uv$ $(2t-2)$ times. If $\phi$ is a mapping from $u$ to a vertex $c$ in $C_{2t+1}$, then there exists an extension of $\phi : P_{uv} \rightarrow C_{2t+1}$ with $\phi(v) = c'$ for precisely the set $\{c': c' \in V(C)-c\}$. In this way, $P_{uv}$ restricts colourings of its endpoints in the same manner as an edge does in ordinary vertex colouring\footnote{This idea is formalized in Lemma \ref{vartheta}.}.

Since the edge-density obtained by Kostochka and Yancey for $k$-critical graphs is tight for $k$-Ore graphs, it seems reasonable that the corresponding density obtained from subdividing a $(2t+2)$-Ore graph could be best possible for $C_{2t+1}$-critical graphs. This idea motivates the following.

\begin{prop}\label{notjustdensity}
Let $t \geq 1$ be an integer, and let $G$ be a $(2t+2)$-Ore graph. Let $G'$ be the graph obtained from $G$ by subdividing each edge in $E(G)$ $(2t-2)$ times. Then $e(G') = \frac{t(2t+3)v(G')-(t+1)(2t-1)}{2t^2+2t-1}$.
\end{prop}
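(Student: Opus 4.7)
The plan is to reduce the statement to a routine algebraic manipulation starting from the Kostochka--Yancey tight density formula for $k$-Ore graphs and the combinatorics of edge subdivision. Since $G$ is a $(2t+2)$-Ore graph, Kostochka and Yancey's result gives equality in their lower bound, namely
\[
e(G) = \left(\tfrac{k}{2} - \tfrac{1}{k-1}\right) v(G) - \tfrac{k(k-3)}{2(k-1)}
\]
for $k = 2t+2$. First I would specialize this with $k = 2t+2$: a short computation shows
\[
\tfrac{k}{2} - \tfrac{1}{k-1} = \tfrac{t(2t+3)}{2t+1}, \qquad \tfrac{k(k-3)}{2(k-1)} = \tfrac{(t+1)(2t-1)}{2t+1},
\]
so that $(2t+1)\, e(G) = t(2t+3)\, v(G) - (t+1)(2t-1)$.

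Next I would record how subdivision affects the vertex and edge counts. Since each edge of $G$ is replaced by a path of length $2t-1$, contributing $2t-2$ new internal vertices and $2t-1$ edges in place of one, we obtain
\[
v(G') = v(G) + (2t-2)\, e(G), \qquad e(G') = (2t-1)\, e(G).
\]
In particular $e(G) = e(G')/(2t-1)$ and $v(G) = v(G') - \tfrac{2t-2}{2t-1}\, e(G')$.

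The last step is substitution: plugging these expressions into $(2t+1)\, e(G) = t(2t+3)\, v(G) - (t+1)(2t-1)$ and clearing the denominator by multiplying through by $(2t-1)$ yields
\[
(2t+1)\, e(G') \;=\; t(2t+3)(2t-1)\, v(G') - t(2t+3)(2t-2)\, e(G') - (t+1)(2t-1)^2.
\]
Collecting the $e(G')$ terms on the left gives a coefficient of $(2t+1) + t(2t+3)(2t-2)$, which I expect to verify factors as $(2t-1)(2t^2 + 2t - 1)$; the right hand side has a common factor of $(2t-1)$, and dividing through yields exactly
\[
(2t^2 + 2t - 1)\, e(G') = t(2t+3)\, v(G') - (t+1)(2t-1),
\]
as claimed. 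There is no substantive obstacle: the only thing to be careful about is the factoring identity $(2t+1) + t(2t+3)(2t-2) = (2t-1)(2t^2 + 2t - 1)$, which is immediate once both sides are expanded.
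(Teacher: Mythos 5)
Your proposal is correct and is precisely the computation the proposition is implicitly relying on (the paper states it without proof). The specializations $\tfrac{k}{2}-\tfrac{1}{k-1}=\tfrac{t(2t+3)}{2t+1}$ and $\tfrac{k(k-3)}{2(k-1)}=\tfrac{(t+1)(2t-1)}{2t+1}$, the subdivision counts $v(G')=v(G)+(2t-2)e(G)$, $e(G')=(2t-1)e(G)$, and the factoring identity $(2t+1)+t(2t+3)(2t-2)=(2t-1)(2t^2+2t-1)$ all check out, and the hypothesis $t\geq 1$ guarantees $k=2t+2\geq 4$ so the Kostochka--Yancey equality applies.
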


We therefore find it reasonable by setting $t=3$ to conjecture that if $G$ is a $C_7$-critical graph, then $e(G) \geq \tfrac{27v(G)-20}{23}$. More generally, we ask the following question.
\begin{question}\label{conj1}
Let $t \geq 3$. Does every $C_{2t+1}$-critical graph $G$ satisfy $e(G) \geq \frac{t(2t+3)v(G)-(t+1)(2t-1)}{2t^2+2t-1}$?
\end{question}

We note that the family of graphs described in Proposition \ref{notjustdensity} show that it is impossible to prove Conjecture \ref{jaeger} using only a density bound. When $t=3$ for example, the graphs described in Proposition \ref{notjustdensity} have an asymptotic density of $\frac{27}{23}$. However, using Euler's formula for planar graphs, we have that if $G$ is a planar graph of girth at least $g$, then $e(G) \leq \frac{g}{g-2}(v(G)-2)$ \textemdash or, asymptotically, that $\frac{e(G)}{v(G)} \leq \frac{g}{g-2}$. In order to obtain a density argument that implies a relaxation of Conjecture \ref{jaeger}, it follows that the girth bound $g$ chosen in the relaxation will satisfy $\frac{g}{g-2} \leq \frac{27}{23}$ \textemdash or in other words, that $g \geq 14$. A proof of Conjecture \ref{jaeger} will thus not be a purely density-based argument: it will require additional tools (for instance tools exploiting planarity).

More generally, we note that a negative answer to Question \ref{conj1} together with Euler's formula for planar graphs implies that if $G$ is a planar graph with girth at least $4t+2$, then $G$ admits a homomorphism to $C_{2t+1}$. The girth bound of $4t+2$ is of particular interest as no counterexamples to the primal version of the conjecture with edge-connectivity $4t+2$ have been found. Indeed, all counterexamples found in \cite{han2018counterexamples} are at most $(4t+1)$-edge connected.

\end{section}

\bibliographystyle{plain}
\bibliography{biblio}
\end{document}